\newcommand {\R}{\mathbb{R}}
\newcommand {\N}{\mathbb{N}}
\renewcommand {\O}{\mathcal{O}}
\newcommand {\Prob}{\mathbb{P}}
\newcommand{\E}{\mathbb{E}}
\newcommand {\cov}{\textrm{Cov}}
\newcommand {\var}{\textrm{Var}}
\newcommand {\1}{\textrm{\textbf{1}}}
\newcommand{\verti}[1]{{\left\vert\kern-0.25ex\left\vert\kern-0.25ex\left\vert #1 
    \right\vert\kern-0.25ex\right\vert\kern-0.25ex\right\vert}}
\newcommand{\ct}{\,\tilde\otimes\,}
\renewcommand{\t}[1]{#1^\top}
\newcommand{\argmin}{\operatornamewithlimits{arg\,min}}
\newcommand{\mat}[1]{\mathbf{#1}}
\newcommand{\spa}[1]{\mathcal{#1}}
\newcommand{\ope}[1]{#1}
\newcommand{\fun}[1]{\lowercase{#1}}
\newcommand{\set}[1]{{#1}}
\newcommand{\Chat}{\widehat{\ope C}}
\newcommand{\ip}[1]{{\left\langle\kern-0.5ex\left\langle\kern-0.5ex\left\langle #1 
    \right\rangle\kern-0.5ex\right\rangle\kern-0.5ex\right\rangle}}
\newcommand{\vertj}{\vert\kern-0.25ex\vert\kern-0.25ex\vert}
\newtheorem{theorem}{Theorem}
\newtheorem{proposition}{Proposition}
\newtheorem{corollary}{Corollary}
\newtheorem{lemma}{Lemma}
\newtheorem{definition}{Definition}
\newtheorem{remark}{Remark}
\newtheorem{example}{Example}
\numberwithin{equation}{section}
\theoremstyle{plain}
\begin{document}

\begin{frontmatter}

\title{{\large Separable Expansions for Covariance Estimation}}

\runtitle{Separable Expansions for Covariance Estimation}

\begin{aug}
\author{\fnms{Tomas} \snm{Masak}\ead[label=e1]{tomas.masak@epfl.ch}},
\author{\fnms{Soham} \snm{Sarkar}\ead[label=e2]{soham.sarkar@epfl.ch}} \and
\author{\fnms{Victor M.} \snm{Panaretos}\ead[label=e3]{victor.panaretos@epfl.ch}}

\thankstext{t1}{Research supported by a Swiss National Science Foundation grant.}

\runauthor{T.~Masak \& S.~Sarkar \& V.M.~Panaretos}

\affiliation{Ecole Polytechnique F\'ed\'erale de Lausanne}

\address{Institut de Math\'ematiques\\
Ecole Polytechnique F\'ed\'erale de Lausanne\\
e-mail: \href{mailto:tomas.masak@epfl.ch}{tomas.masak@epfl.ch},
\href{mailto:soham.sarkar@epfl.ch}{soham.sarkar@epfl.ch}, \href{mailto:victor.panaretos@epfl.ch}{victor.panaretos@epfl.ch}
}

\end{aug}

\begin{abstract} 
The non-parametric estimation of covariance lies at the heart of functional data analysis, whether for curve or surface-valued data. The case of a two-dimensional domain poses both statistical and computational challenges, which are typically alleviated by assuming separability. However, separability is often questionable, sometimes even demonstrably inadequate. We propose a framework for the analysis of covariance operators of random surfaces that generalises separability, while retaining its major advantages. Our approach is based on the expansion of the covariance into a series of separable terms. The expansion is valid for any covariance over a two-dimensional domain. Leveraging the key notion of the partial inner product, we extend the power iteration method to general Hilbert spaces and show how the aforementioned expansion can be efficiently constructed in practice. Truncation of the expansion and retention of the leading terms automatically induces a non-parametric estimator of the covariance, whose parsimony is dictated by the truncation level. The resulting estimator can be calculated, stored and manipulated with little computational overhead relative to separability. Consistency and rates of convergence are derived under mild regularity assumptions, illustrating the trade-off between bias and variance regulated by the truncation level. The merits and practical performance of the proposed methodology are demonstrated in a comprehensive simulation study and on classification of EEG signals.
\end{abstract}

\begin{keyword}[class=AMS]
\kwd[Primary ]{62G05, 62M40}
\kwd[; secondary ]{65F45}
\end{keyword}

\begin{keyword}
\kwd{covariance operator}
\kwd{functional data analysis}
\kwd{random surfaces}
\kwd{separability}
\end{keyword}

\end{frontmatter}

\tableofcontents

\newpage
\section{Introduction}

We consider the interlinked problems of parsimonous representation, efficient estimation, and tractable manipulation of a random surface's covariance, i.e.\ the covariance of a random process on a two-dimensional domain. We operate in the framework of \emph{functional data analysis} \citep[see][]{ramsay2002,hsing2015}, which treats the process' realizations as elements of a separable Hilbert space, and assumes the availability of replicates thereof, thus allowing for nonparametric estimation. As a specific example, consider a spatio-temporal process $\ope X = (\fun X(t,s), t~\in \set T, s \in \set S)$ taking values in $\spa L_2(\set T \times \set S)$ with covariance kernel $c(t,s,t',s') = \cov(\fun X(t,s),\fun X(t',s'))$, and induced covariance operator $\ope C: \spa L_2(\set T \times \set S) \to \spa L_2(\set T \times \set S)$. We assume that we have access to (potentially discretised) i.i.d.\ realisations  $\ope X_1, \ldots, \ope X_N$ of $\ope X$, and wish to estimate $c$ nonparametrically and computationally feasibly, ideally via a parsimonious representation allowing for tractable further computational manipulations (e.g.\ inversion) required in key tasks involving $c$ (regression, prediction, classification).

Although the nonparametric estimation of covariance kernels is still an active field of research in functional data analysis, it is safe to say that the problem is well understood for curve data (i.e.~functional observations on one-dimensional domains), see \cite{wang2016} for a complete overview. The same cannot be said about surface-valued data (i.e.\ functional observations on bivariate domains). Even though most, if not all, univariate procedures can be in theory taken and adapted to the case of surface-valued data, in practice one quickly runs into computational and statistical limitations associated with the dimensionality of the problem. For instance, suppose that we observe the surfaces densely at $K_1$ temporal and $K_2$ spatial locations. Estimation of the empirical covariance then requires evaluations at $K_1^2 K_2^2$ points. Even storage of the empirical covariance is then prohibitive for grid sizes as small as $K_1 \approx K_2 \approx 100$. Moreover, statistical constraints -- accompanying the necessity to reliably estimate $K_1^2 K_2^2$ unknown parameters from only $N K_1 K_2$ observations -- are usually even tighter \citep{aston2017}. %An abundant source of such data sets is bio-medical imaging (such as EEG or fMRI), where data are typically noiseless and sampled on regular and dense grids. \tomas{I would suggest to delete the last sentence here and use the bronze paragraph below instead.}
Examples of random surfaces observed on a~grid densely enough to impede the usage of an unstructured covariance were given by \citet{wang2016}, and labeled ``next-generation functional data''. For instance, they arise abundantly in neuroimaging (e.g.\ EEG or fMRI studies).

Due to the aforementioned challenges associated with higher dimensionality, additional structure is often imposed on the spatio-temporal covariance as a modeling assmption. Perhaps the most prevalent assumption is that of \emph{separability}, factorizing the covariance kernel $c$~into a~purely spatial part and a purely temporal part:
$c(t,s,t',s') = a(t,t')\, b(s,s')$, $t,t' \in \set T$, \mbox{$s,s' \in \set S$}.
When the data are observed on a grid, separability entails that the 4-way covariance tensor $\mat C \in \R^{K_1 \times K_2 \times K_1 \times K_2}$ simplifies into an outer product of two matrices, say $\mat A \in \R^{K_1 \times K_1}$ and $\mat B \in \R^{K_2 \times K_2}$. This reduces the number of parameters to be estimated from $\O(K_1^2 K_2^2)$ to $\O(K_1^2 + K_2^2)$, simplifying estimation from both computational and statistical viewpoints. Moreover, subsequent manipulation of the estimated covariance becomes much simpler. However, assuming separability often encompasses oversimplification and has undesirable practical implications for real data \citep[see][]{rougier2017}. A number of tests for separability of space-time functional data have been recently developed \citep{aston2017,bagchi2020,constantinou2017}, and their application demonstrates that separability is distinctly violated for several data sets previously modeled as separable.

If separability of the covariance is rejected for a data set by one of the aforementioned tests, an alternative model with similar computational advantages to separability is needed. But there is a clear deficiency in this regard in the present literature. While many authors focus on testing separability \citep[see][for a list of up-to-date references]{chen2021} or assessing the departures from it \citep{huang2019,dette2020}, little work has been done to offer non-parametric alternatives to covariance estimation beyond separability. This becomes evident from the fact that separability is often assumed in practice, not because it is believed to hold, but merely due to the computational gains it offers \citep[see][]{gneiting2006,pigoli2018}.

In this article, we aspire to bridge this gap and go beyond separability in covariance estimation. To this effect, we introduce and study a decomposition allowing for representation, estimation and manipulation of a covariance on a two-dimensional domain. This decomposition can be viewed as a generalization of separability, and applies to \emph{any} covariance. By truncating this decomposition, we are able to obtain the sought computational and statistical efficiency. Our approach is motivated by the fact that any Hilbert-Schmidt operator on a product Hilbert space can be additively decomposed in a~separable manner. For example, the covariance kernel $c$ can be decomposed as
\begin{equation}\label{eq:separable_decomposition_space_time}
c(t,s,t^\prime,s^\prime) = \sum_{r=1}^\infty \sigma_r\, a_r(t,t^\prime)\,b_r(s,s^\prime),
\end{equation}
where $(\sigma_r)_{r \ge 1}$ is the non-increasing and non-negative sequence of {\em scores} and $(a_r)_{r \ge 1}$, resp. $(b_r)_{r \ge 1}$, is an orthonormal basis of $\spa L_2(\set T \times \set T)$, resp. $\spa L_2(\set S \times \set S)$. We call \eqref{eq:separable_decomposition_space_time} the {\em separable expansion} of $c$, because the spatial and temporal dimensions of $c$ are separated in each term. This fact distinguishes the separable expansion from the eigendecomposition of $c$, i.e.
$
c(t,s,t^\prime,s^\prime) = \sum_{r=1}^\infty \lambda_r\, e_r(t,s)\,e_r(t^\prime,s^\prime)
$,
which contains no purely temporal or purely spatial components.

The separable expansion \eqref{eq:separable_decomposition_space_time} effectively analyzes $c$ into its principal \emph{separable components}, much in the same way its eigendecompostion analyzes $c$ into its  \emph{principal components}. The retention of a few leading terms in \eqref{eq:separable_decomposition_space_time} represents a parsimonious reduction generalizing separability, and affording similar advantages. The \emph{nearest Kronecker product} \citep{vanloan1993}, or equivalently the \emph{best separable approximation} \citep{genton2007}, are directly related to the leading term in \eqref{eq:separable_decomposition_space_time}.
The subsequent terms capture departures from separability, yet still in a~separable manner.
We study estimators of $c$ obtained by truncating its separable expansion, i.e.~estimators of the form
\begin{equation}\label{eq:R_separable_space_time}
\widehat c(t,s,t^\prime,s^\prime) = \sum_{r=1}^R \hat\sigma_r\, \hat a_r(t,t^\prime)\,\hat b_r(s,s^\prime)
\end{equation}
for an appropriate choice of the \emph{degree-of-separability} $R \in \N$. Our approach should not be viewed as a model: any covariance can be approximated in this way to arbitrary precision, when $R$ is chosen sufficiently large. On the other hand, {as in principal component analysis}, the expansion is most useful for a given covariance when $R$ can be chosen to be relatively low, and we explore the role of $R$ in estimation quality and computational tractability. Our asymptotic theory illustrates how the bias/variance of the resulting estimator is dictated by its degree-of-separability.

To use \eqref{eq:separable_decomposition_space_time}, we must of course be able to construct it, and our starting point is an iterative (provably convergent) algorithm for calculating the separable expansion of a general operator. Conceptually, an $R$-separable estimator of the form \eqref{eq:R_separable_space_time}  would be obtained by applying the algorithm to the empirical covariance estimator. We show, however, that the estimator can be constructed directly at the level of the 2D surface data, without the need to ever compute or store the 4D empirical covariance. The algorithm consists of a~generalization of the power iteration method to arbitrary Hilbert spaces, and an operation called the \emph{partial inner product} lies at its foundation.

The partial inner product was used implicitly by \cite{bagchi2020} and explicitly under the name `partial product' in the follow-up work by \cite{dette2020}, both in the context of separability tests. These were possibly the first to consider this operation over infinite-dimensional spaces. In another avenue of research, generalizations of separability similar to \eqref{eq:R_separable_space_time} have been considered before, in a finite-dimensional context. Based on the groundwork on Kronecker approximations \citep{vanloan1993, vanloan2000}, \cite{munck2005} used Lagrange multipliers to estimate a covariance model similar to \eqref{eq:R_separable_space_time} with $R=2$ only, and a convex relaxation of an optimization problem with a non-convex constraint leading to a similar estimator was considered in \cite{hero2013a}. In an infinite-dimensional context, \cite{lynch2018} introduced the \emph{weakly separable model}, which bears some similarity to \eqref{eq:R_separable_space_time}, but places additional structural assumptions. These lead to some useful properties, but also restricts generality -- it is in effect a \emph{model}, not possessing optimal reduction properties.

Our approach differs from the aforementioned references in several important aspects, which add up to the main contributions of the present paper:
\begin{enumerate}%[leftmargin=0.5cm,itemindent=0cm]

\item The goal of this paper is estimation beyond the separability assumption. We consider the separable expansion \eqref{eq:separable_decomposition_space_time}, which is valid at the level of operators on general (possibly infinite-dimensional) Hilbert spaces. To construct this expansion, we provide in Section~\ref{sec:PIP} a generalization of the power iteration method based on a broader definition of the partial inner product, which yields a natural framework for generalizations of separability such as \eqref{eq:R_separable_space_time}.
    
\item We place emphasis on the computational aspect, considering the mere evaluation of the empirical covariance estimator as too expensive. We make the computationally consequential observation that the estimator \eqref{eq:R_separable_space_time} can be calculated directly at the level of data, significantly reducing the number of operations needed to construct the estimator (c.f.\ Section~\ref{sec:estimation}). This puts our approach in a sharp contrast to the aforementioned prior works, where the empirical covariance needs to be fully computed (even though sometimes not stored) and is possible due to the availability of replicated observations.

\item We provide an efficient inversion algorithm for covariances of the form \eqref{eq:R_separable_space_time}, which is based on the preconditioned conjugate gradient. This allows for subsequent manipulation of the estimated covariance (e.g.\ using it for prediction) with ease comparable to that of a separable model. Moreover, the inversion algorithm can also be utilized for other estimators with a~similar structure, e.g.\ for the weakly separable estimator of \cite{lynch2018}, who left the question of efficient inversion up for further research.

\item We develop asymptotic statistical theory, including convergence rates, for the truncation estimator that apply both to the case of fully observed data and to the case of functional data observed on a grid with noise contamination. The latter is important especially when relating statistical and computational efficiency. These results are genuinely nonparametric, in the sense that we make only very mild assumptions on the underlying covariance $\ope C$.

\item Finally, we examine the effect of the degree-of-separability $R$ on the performance of the estimation procedure. We provide both theoretical guidance for the choice of $R$, as well as practical means in the form of a cross-validation strategy to automatically select $R$ based on the data. This is an essential point as truncating the expansion elicits a form parsimony, and the choice of $R$ has a genuine regularization effect, trading off between bias and variance.
\end{enumerate}

The advantages of estimation beyond separability using the proposed estimator \eqref{eq:R_separable_space_time} are demonstrated in a comprehensive simulation study and on the problem of classification of EEG signals. The latter task additionally illustrates the importance of a suitable covariance estimator, in terms of both quality and ease of computational manipulation, which is offered by the proposed methodology.

\section{Mathematical Background}\label{sec:background}

Let $\spa H$ be a real separable Hilbert space. The Banach space of bounded linear transformations (operators, in short) on $\spa H$ is denoted by $\spa S_\infty(\spa H)$ and equipped with the operator norm $\verti{\ope F}_\infty = \sup_{\| x \|=1} \| \ope F x \|$. An operator $\ope F \in \spa S_{\infty}(\spa H)$ is compact if it can be written as
$\ope F x = \sum_{j=1}^\infty \sigma_j \langle e_j, x \rangle_{\spa H} f_j$ for a sequence of non-negative numbers $\{\sigma_j\}$ decreasing to $0$, and some orthonormal bases $\{ e_j \}_{j=1}^\infty$ and $\{ f_j \}_{j=1}^\infty$ of $\spa H$.
For $p \geq 1$, a compact operator $\mathcal{F}$ on $\spa H$ belongs to the Schatten class $\spa S_p(\spa H)$ if $\verti{\ope F}_p^p = \sum_{j=1}^\infty \sigma_j^p < \infty$. When equipped with the norm $\verti{\cdot}_p$, the space $\spa S_p(\spa H)$ is a Banach space. In particular, $\spa S_2(\spa H)$ is the space of Hilbert-Schmidt operators. It is itself a Hilbert space with the inner product of $\ope F,\ope G \in \spa S_2(\spa H)$ defined as
$\langle \ope F, \ope G \rangle_{\spa S_2(\spa H)} = \sum_{j=1}^\infty \langle \ope F e_j , \ope G e_j \rangle_{\spa H}$,
where $\{ e_j\}_{j=1}^\infty$ is an arbitrary orthonormal basis of $\mathcal \spa H$.

In the case of $\spa H=\spa L_2(E,\spa E,\mu)$, where $(E,\spa E,\mu)$ is a measure space, every $\ope G \in \spa S_2(\spa H)$ is an integral operator associated with a kernel $g \in \spa L_2(E \times E, \spa E \otimes \spa E, \nu)$, where $\spa E \otimes \spa E$ is the product $\sigma$-algebra and $\nu$ the product measure. Hence the space of operators and the space of kernels are isometrically isomorphic, which we write as $\spa L_2(E \times E, \spa E \otimes \spa E, \nu) \simeq \spa S_2(\spa H)$. For $y=\ope G x$, it holds
\[
y(t) = \int_E g(t,t') x(t') d \mu(t').
\]

The tensor product of two Hilbert spaces $\spa H_1$ and $\spa H_2$, denoted by $\spa H := \spa H_1 \otimes \spa H_2$, is the completion of the following set of finite linear combinations of abstract tensor products \mbox{(c.f.~\citealp{weidmann2012}):}
\begin{equation}\label{eq:setOfTensors}
\bigg\{ \sum_{j=1}^m x_j \otimes y_j \,:\, x_j \in \spa H_1, y_j \in \spa H_2, m \in \N \bigg\},
\end{equation}
under the inner product $\langle x_1 \otimes y_1, x_2 \otimes y_2 \rangle_{\spa H} = \langle x_1, x_2\rangle_{\spa H_1} \langle y_1, y_2\rangle_{\spa H_2}$, for all $x_1,x_2 \in \spa H_1$ and $y_1,y_2 \in \spa H_2$. If $\{ e_j \}_{j=1}^{\infty}$ and $\{f_j\}_{j=1}^\infty$ are orthonormal bases of $\spa H_1$ and $\spa H_2$, then $\{ e_i \otimes f_j \}_{i,j=1}^\infty$ is an orthonormal basis of $\spa H$. This construction of product Hilbert spaces can be generalized to Banach spaces $\spa B_1$ and $\spa B_2$. That is, one can define $\spa B = \spa B_1 \otimes \spa B_2$ in a similar way, the only difference being that the completion is done under the norm $\| x \otimes y \|_{\spa B} := \| x \|_{\spa B_1} \| y \|_{\spa B_2}$, for $x \in \spa B_1$ and $y \in \spa B_2$.

Following \cite{aston2017}, for $\ope A \in \spa S_p(\spa H_1)$ and $\ope B \in \spa S_p(\spa H_2)$, we define $\ope A \ct \ope B$ as the unique operator on $\spa S_p(\spa H_1) \otimes \spa S_p(\spa H_2)$ satisfying
\begin{equation}\label{eq:ct_symbol}
(\ope A \ct \ope B)(x \otimes y) = \ope A x \otimes \ope B y \,, \qquad \forall x \in \spa H_1, y \in \spa H_2 \,.
\end{equation}
By the abstract construction above, we have $\verti{\ope A \ct \ope B}_p = \verti{\ope A}_p \verti{\ope B}_p$. In this paper, we are mostly interested in products of spaces of Hilbert-Schmidt operators: for $\ope A_1,\ope A_2 \in \spa S_2(\spa H_1)$ and $\ope B_1,\ope B_2 \in \spa S_2(\spa H_2)$, we have $\langle \ope A_1 \ct \ope B_1 , \ope A_2 \ct \ope B_2 \rangle_{\spa S_2(\spa H)} = \langle \ope A_1,\ope A_2 \rangle_{\spa S_2(\spa H_1)} \langle \ope B_1,\ope B_2 \rangle_{\spa S_2(\spa H_2)} $.

With these background concepts in place, the definition of separability can be phrased as follows.

\begin{definition}[Separability]\label{def:separability}
Let $\spa H_1$ and $\spa H_2$ be separable Hilbert spaces and $\spa H := \spa H_1 \otimes \spa H_2$. An operator $\ope C \in \spa{S}_p(\spa H)$ is called separable if $\ope C = \ope A \ct \ope B$ for some $\ope A \in \spa{S}_p(\spa H_1)$ and $\ope B \in \spa{S}_p(\spa H_2)$.
\end{definition}

When $\ope A$ and $\ope B$ are integral operators with kernels $a=a(t,t')$ and $b=b(s,s')$, respectively, the kernel of $\ope C = \ope A \ct \ope B$ is given by $c(t,s,t',s') = a(t,t')b(s,s')$.

Let $\ope X$ be a random element of a separable Hilbert space $\spa H$ with $\E \|\ope X \|^2 < \infty$. Then, the mean $m = \E \ope X$ and the covariance $\ope C=\E[ (\ope X - m) \otimes (\ope X-m) ]$ are well defined \mbox{\citep{hsing2015}}. The covariance operator $\ope C \in \spa{S}_2(\spa{H})$ is positive semi-definite. In the case of $\spa H=\spa L_2([0,1]^2)$, the covariance operator is related to the covariance kernel \mbox{$c=c(t,s,t',s')$~via}
\[
(\ope C f)(t,s) = \int_0^1 \int_0^1 c(t,s,t',s') f(t',s') d t' d s' \,.
\]
The kernel $c$ is continuous, for example, if $\ope X = (x(t,s):t,s \in [0,1])$ is a mean-square continuous process with continuous sample paths (the latter is needed for $\ope X$ to be a random element of $\spa L_2([0,1]^2)$). In that case, $c(t,s,t',s') = \E[\fun X(t,s) - \E \fun X(t,s)][\fun X(t',s') - \E \fun X(t',s')]$.

\section{Methodology}\label{sec:methodology}

\subsection{The Separable Expansion}\label{sec:SCD}

The following four spaces are isometrically isomorphic, and thus the covariance $\ope C$ of a random element $\ope X \in \spa H_1 \otimes \spa H_2$ can be regarded as an element of any of these, each of which leads to different perspectives on potential decompositions:
\[
\spa H_1 \otimes \spa H_2 \otimes \spa H_1 \otimes \spa H_2 \simeq \spa{S}_2(\spa H_1 \otimes \spa H_2) \simeq \spa{S}_2(\spa H_2 \otimes \spa H_2, \spa H_1 \otimes \spa H_1) \simeq \spa{S}_2(\spa H_1) \otimes \spa{S}_2(\spa H_2)  .
\]
If we consider $\ope C \in \spa{S}_2(\spa H_1 \otimes \spa H_2)$, we can write its eigendecomposition as
\mbox{$
\ope C = \sum_{j=1}^\infty \lambda_j g_j \otimes g_j
$,}
where $\lambda_1 \geq \lambda_2 \geq \ldots$ are the eigenvalues and $\{g_j\} \subset \spa H_1 \otimes \spa H_2$ are the eigenvectors, forming an orthonormal basis of $\spa H_1 \otimes \spa H_2$.
On the other hand, if we consider $\ope C \in \spa{S}_2(\spa H_2 \otimes \spa H_2, \spa H_1 \otimes \spa H_1)$, we can write its singular value decomposition as
$
\ope C = \sum_{j=1}^\infty \sigma_j e_j \otimes f_j,
$
where $\sigma_1 \geq \sigma_2 \geq  \ldots \geq 0$ are the singular values, and $\{e_j\} \subset \spa H_1 \otimes \spa H_1$ and $\{f_j\} \subset \spa H_2 \otimes \spa H_2$ are the (left and right) singular vectors, forming orthonormal bases of $\spa H_1 \otimes \spa H_1$ and $\spa H_2 \otimes \spa H_2$, respectively. Note that $\ope C$ is not self-adjoint in this case; $\{ e_j \}$ are the eigenvectors of $\ope C \ope C^\ast$, $\{ f_j\}$ are the eigenvectors of $\ope C^\ast \ope C$, and $\{ \sigma_j^2 \}$ are eigenvalues of both $\ope C \ope C^\ast$ and $\ope C^\ast \ope C$.
If we consider $\ope C \in \spa{S}_2(\spa H_1) \otimes \spa{S}_2(\spa H_2)$, the previous singular value decomposition can be re-expressed as
$
\ope C = \sum_{j=1}^\infty \sigma_j \ope A_j \otimes \ope B_j,
$
where $\sigma_1 \geq \sigma_2\geq  \ldots \geq 0$ are the same as before, and $\{\ope A_j\} \subset \spa{S}_2(\spa H_1)$ and $\{\ope B_j\} \subset \spa{S}_2(\spa H_2)$ are isomorphic to $\{e_j\}$ and $\{f_j\}$, respectively. Finally, the singular value decomposition can be written down for (the self-adjoint version of) $\ope C \in \spa{S}_2(\spa H_1 \otimes \spa H_2)$ using the symbol defined in \eqref{eq:ct_symbol} as
\begin{equation}\label{eq:SVD_of_C}
\ope C = \sum_{j=1}^\infty \sigma_j \ope A_j \ct \ope B_j.
\end{equation}

We refer to \eqref{eq:SVD_of_C} as the \emph{separable expansion} of $\ope C$. At the level of kernels, it corresponds to \eqref{eq:separable_decomposition_space_time}. We also refer to the $\sigma_j$'s as the \emph{scores} and $\ope A_j$'s and $\ope B_j$'s to \emph{left} and \emph{right factors}, respectively.

\begin{remark}
The separable expansion generalizes separability on an arbitrary domain in a conceptually similar way to how the nearest Kronecker product \citep{vanloan1993} generalizes to the \emph{sum of Kronecker products} (e.g., the PhD thesis of N. Pitsianis, 1997, Cornell University) or the \emph{Kronecker product singular value decomposition} \cite[Section 6]{vanloan2000} or the \emph{Kronecker sum decomposition} \citep{hero2013a} on the domain of matrices. When working with general Hilbert spaces, however, the Kronecker product and the associated matricizations and vectorizations of multi-dimensional objects (such as the covariance) do not apply and must be circumvented. Moreover, the general framework offers more simplicity and versatility.
\end{remark}

The eigendecomposition and the separable expansion \eqref{eq:SVD_of_C} are two different decompositions of the same element $\ope C \in \spa{S}_2(\spa H_1 \otimes \spa H_2)$. If all but $R \in \N$ scores in \eqref{eq:SVD_of_C} are zero, we say that the \emph{degree-of-separability} of $\ope C$ is $R$ and write $\text{DoS}(\ope C)=R$. In this case, we also say in short that $\ope C$ is $R$-separable. If $\ope C$ does not necessarily have a finite degree-of-separability, but we truncate the series at level $R$ yielding $\ope C_R = \sum_{j=1}^R \sigma_j \ope A_j \ct \ope B_j$ for some $R \in \N$, we call $\ope C_R$ the best \mbox{$R$-separable} approximation of $\ope C$, because
\begin{equation}\label{eq:best_Rsep}
\ope C_R = \argmin_{\ope G} \verti{\ope C - \ope G}_2^2 \quad \textrm{s.t.} \quad \text{DoS}(\ope G) \leq R.
\end{equation}

It may be tempting to find an analogy between the degree-of-separability and the rank of an operator, which equals the number of non-zero eigenvalues. But there is no simple relationship between these two. In particular, $\ope C$ can be of infinite rank even if it is $1$-separable. On the other hand, $\ope C$ has degree $R=1$ if and only if $\ope C$ is separable according to Definition~\ref{def:separability}. In that case, we simply call $\ope C$ separable instead of 1-separable.

We believe that the separable expansion can be used to induce an interesting form of parsimony because, for many operators, the (approximate) degree-of-separability is substantially smaller than the (approximate) rank. The following example provides an illustration.

\begin{figure}[!t]
   \centering
   \begin{tabular}{cc}
   \includegraphics[width=0.45\textwidth]{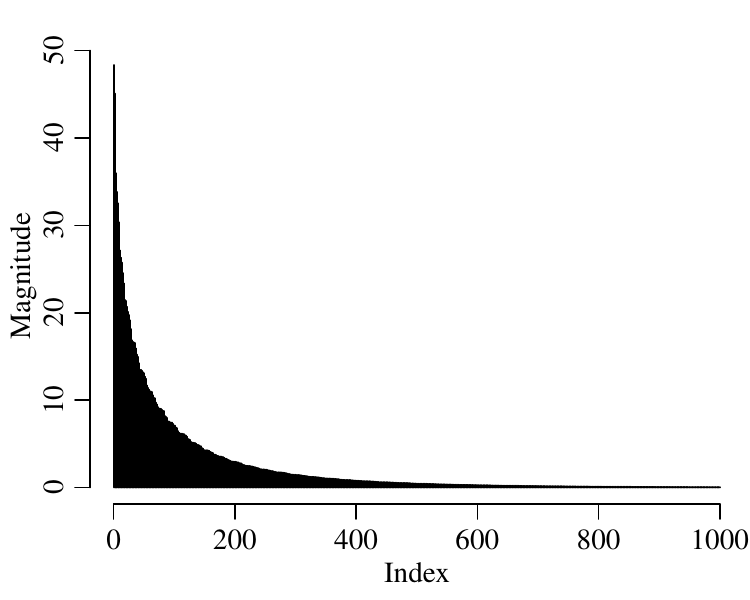} &
   \includegraphics[width=0.45\textwidth]{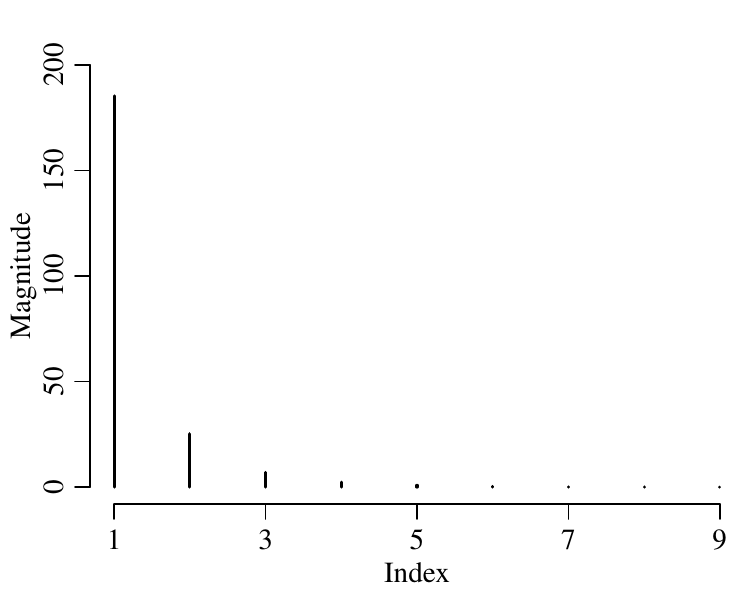}
   \end{tabular}  
   \caption{\mbox{Eigenvalues (left) and separable expansion scores (right) of the covariance from Example~\ref{ex:gneiting}.}}
    \label{fig:Irish_Wind} 
\end{figure}

\begin{example}\label{ex:gneiting}
Consider the Irish Wind data set of \cite{haslett1989}. The data set was modeled with a separable covariance structure at first, before \cite{gneiting2002,gneiting2006} argued that separability has hardly justifiable practical consequences for this data set. Later, separability was formally rejected by a statistical test in \cite{bagchi2020}. A non-separable parametric covariance model was developed specifically for the Irish Wind data in \cite{gneiting2002}. We consider the fitted parametric covariance model as the ground truth $\ope C$ (see Appendix~\ref{sec:setup_for_simulations} for a full specification). We evaluate $\ope C$ on a $50 \times 50$ grid in the domain $[0,20]^2$ to obtain $\mat C \in \R^{50 \times 50}$ and plot the eigenvalues and the separable expansion scores of $\mat C$ (evaluated on the same grid) in Figure~\ref{fig:Irish_Wind}. While $\mat C$ is clearly not low-rank, it is approximately of very low degree-of-separability. We will return to this particular covariance in Section~\ref{sec:simulations} and show that the choices $R=2$ or $R=3$ lead to very good approximations of $\mat C$.
\end{example}

\subsection{Partial Inner Product and Generalized Power Iteration}\label{sec:PIP}

For $x \in \spa H_1$ and $y \in \spa H_2$, the \emph{tensor product operators} $(x \otimes_1 y) : \spa H_1 \to \spa H_2$ and $(x \otimes_2 y) : \spa H_2 \to \spa H_1$ are defined as $(x \otimes_1 y)\, e = \langle x,e \rangle_{\spa H_1} y$ for all $e \in \spa H_1$, and $(x \otimes_2 y) f = \langle y,f \rangle_{\spa H_2} x$ for all $f \in \spa H_2$, respectively \citep[][p. 76]{hsing2015}. The symbols $\otimes_1$ and $\otimes_2$ themselves can be understood as mappings
$
\otimes_1: [\spa H_1 \times \spa H_2] \times \spa H_1 \to \spa H_2$
and
$
\otimes_2: [\spa H_1 \times \spa H_2] \times \spa H_2 \to \spa H_1
$.
We develop the partial inner products $T_1$ and $T_2$ by extending these mappings from the Cartesian product space $\spa H_1 \times \spa H_2$ to the richer outer product space $\spa H_1 \otimes \spa H_2$. This is straightforward in principle, because the finite linear combinations of the elements of $\spa H_1 \times \spa H_2$ are by definition dense in $\spa H_1 \otimes \spa H_2$.

\begin{definition}\label{def:PIP}
Let $\spa H=\spa H_1 \otimes \spa H_2$. The partial inner products are the two unique bi-linear operators $T_1: \spa H \times \spa H_1 \to \spa H_2$ and $T_2: \spa H \times \spa H_2 \to \spa H_1$ defined by
\[
\begin{split}
T_1(x \otimes y, e) &= (x \otimes_1 y) e , \quad x,e \in \spa H_1, \, y \in \spa H_2,\\
T_2(x \otimes y, f) &= (x \otimes_2 y) f , \quad x \in \spa H_1, \, y,f \in \spa H_2.
\end{split}
\]
\end{definition}

The definition includes the claim of uniqueness of the partial inner products, which is proven in the appendix. The partial inner product has an explicit integral representation on any $\spa L_2$ space, given by the following proposition. From the computational perspective, the four-dimensional discrete case is the most important one. Hence we state it as the subsequent corollary.

\begin{proposition}\label{prop:integral_rep_PIP}
Let $\spa H_1=\spa{L}_2(E_1,\spa E_1, \mu_1)$, $\spa H_2=\spa{L}_2(E_2,\spa E_2, \mu_2)$, and $g \in \spa H_1 \otimes \spa H_2$, $v \in \spa H_2$. If we denote $u=T_2(g,v)$, then
\begin{equation}\label{eq:integral_representation}
   u(t) = \int_{E_2} g(t,s) v(s) d \mu_2(s) . 
\end{equation}
\end{proposition}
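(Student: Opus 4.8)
The plan is to verify the integral formula \eqref{eq:integral_representation} first on the dense set \eqref{eq:setOfTensors} of finite sums of simple tensors, and then extend to all of $\mathcal H_1 \otimes \mathcal H_2$ by continuity, using the bound $\|T_2(g,v)\|_{\mathcal H_1} \le \|g\|_{\mathcal H} \|v\|_{\mathcal H_2}$ already established just after Definition~\ref{def:PIP}. The point is that both sides of \eqref{eq:integral_representation}, viewed as maps $g \mapsto u$, are bounded linear operators from $\mathcal H_1 \otimes \mathcal H_2$ into $\mathcal H_1$, so agreement on a dense set forces agreement everywhere.

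First I would treat the rank-one case $g = x \otimes y$ with $x \in \mathcal H_1 = \mathcal{L}^2(E_1,\mathcal E_1,\mu_1)$ and $y \in \mathcal H_2 = \mathcal{L}^2(E_2,\mathcal E_2,\mu_2)$. Here the kernel is the product function $g(t,s) = x(t) y(s)$ (this is exactly the isometric identification $\mathcal H_1 \otimes \mathcal H_2 \simeq \mathcal{L}^2(E_1 \times E_2)$ recalled in the background section), and by Definition~\ref{def:PIP} together with \eqref{eq:tensor_product_operators} we have $T_2(x \otimes y, v) = (x \otimes_2 y) v = \langle y, v \rangle_{\mathcal H_2}\, x$. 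On the other hand $\int_{E_2} g(t,s) v(s)\, d\mu_2(s) = x(t) \int_{E_2} y(s) v(s)\, d\mu_2(s) = \langle y,v\rangle_{\mathcal H_2}\, x(t)$, so the two expressions coincide pointwise (a.e. $t$). By bilinearity of $T_2(\cdot,v)$ and linearity of the integral in $g$, the identity extends to any finite sum $g = \sum_{j=1}^m x_j \otimes y_j$ in the set \eqref{eq:setOfTensors}.

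It remains to pass to the limit. Given arbitrary $g \in \mathcal H_1 \otimes \mathcal H_2$, pick $g_k \to g$ in $\mathcal H$ with each $g_k$ in \eqref{eq:setOfTensors}. On the left, $T_2(g_k,v) \to T_2(g,v)$ in $\mathcal H_1$ by the continuity bound \eqref{eq:bound_PIP}. On the right, the map $h \mapsto \big(t \mapsto \int_{E_2} h(t,s) v(s)\, d\mu_2(s)\big)$ is, by Cauchy--Schwarz in the $s$-variable and then Tonelli, a bounded linear map $\mathcal{L}^2(E_1 \times E_2) \to \mathcal{L}^2(E_1)$ with norm at most $\|v\|_{\mathcal H_2}$; hence $\int_{E_2} g_k(\cdot,s) v(s)\, d\mu_2(s) \to \int_{E_2} g(\cdot,s) v(s)\, d\mu_2(s)$ in $\mathcal{L}^2(E_1)$. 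Since the two sequences are equal for every $k$, their limits agree in $\mathcal{L}^2(E_1)$, which is \eqref{eq:integral_representation}.

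The only genuinely delicate point is measurability and integrability of the integrand on the right-hand side for a general $g$: one must know that a representative of $g \in \mathcal{L}^2(E_1 \times E_2, \mathcal E_1 \otimes \mathcal E_2, \nu)$ is jointly measurable, so that $s \mapsto g(t,s) v(s)$ is $\mu_2$-integrable for $\mu_1$-a.e.\ $t$ (Fubini--Tonelli) and the resulting function of $t$ lies in $\mathcal{L}^2(E_1)$. This is standard for the product measure $\nu = \mu_1 \otimes \mu_2$ and $\sigma$-finite $\mu_1,\mu_2$, and is precisely why the statement is phrased at the level of $\mathcal{L}^2$ kernels; I would simply invoke it rather than belabour it. Everything else is the soft "dense set plus continuity" argument above.
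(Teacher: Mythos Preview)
Your proof is correct. It differs from the paper's argument in its organising device: the paper expands $g$ via the singular value decomposition of the associated Hilbert--Schmidt operator, writes $g(t,s)=\sum_j \sigma_j e_j(t)f_j(s)$, applies $T_2$ term by term, and then swaps the infinite sum with the integral by Fubini. You instead verify the identity on finite tensors and pass to the limit by showing that both $g\mapsto T_2(g,v)$ and $g\mapsto \int_{E_2} g(\cdot,s)v(s)\,d\mu_2(s)$ are bounded linear maps $\mathcal H_1\otimes\mathcal H_2 \to \mathcal H_1$. The two arguments are close in spirit---both reduce to simple tensors---but yours is a bit more abstract and sidesteps the SVD entirely, while the paper's version gives an explicit series representation of $u$ along the way. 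Your closing remark on joint measurability and the need for $\sigma$-finiteness to invoke Fubini--Tonelli is well placed; the paper leaves that implicit.
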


\begin{corollary}\label{lem:discrete_PIP}
Let $K_1, K_2 \in \N$, $\spa H_1 = \R^{K_1 \times K_1}$, $\spa H_2 = \R^{K_2 \times K_2}$. Let $\mat G \in \spa H_1 \otimes \spa H_2$ and $\mat V \in \spa H_2$. If we denote $\mat U = T_2(\mat G,\mat V)$, then
\begin{equation}\label{eq:discrete_PIP}
\mat U[i,j] = \sum_{k=1}^{K_2} \sum_{l=1}^{K_2} \mat G[i,j,k,l] \mat V[k,l] , \quad \forall i,j = 1, \ldots, K_1 .
\end{equation}
\end{corollary}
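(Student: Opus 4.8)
The plan is to derive Corollary~\ref{lem:discrete_PIP} as a direct specialization of Proposition~\ref{prop:integral_rep_PIP}, by identifying the relevant $\mathcal L^2$ spaces with the appropriate counting measures. Concretely, I would observe that $\mathcal H_1 = \R^{K_1 \times K_1}$ is isometrically isomorphic to $\mathcal L^2(E_1, \mathcal E_1, \mu_1)$ with $E_1 = \{1,\dots,K_1\} \times \{1,\dots,K_1\}$, $\mathcal E_1$ the power set, and $\mu_1$ the counting measure; similarly for $\mathcal H_2 = \R^{K_2\times K_2}$. Under these identifications an element $\mathbf G \in \mathcal H_1 \otimes \mathcal H_2$ corresponds to a kernel $g$ on $E_1 \times E_2$, i.e. to the 4-way array with entries $g\big((i,j),(k,l)\big) = \mathbf G[i,j,k,l]$, and $\mathbf V \in \mathcal H_2$ corresponds to $v$ with $v(k,l) = \mathbf V[k,l]$.

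Next I would simply invoke \eqref{eq:integral_representation}: with $u = T_2(g,v)$ one has $u(t) = \int_{E_2} g(t,s)\,v(s)\,d\mu_2(s)$ for $t \in E_1$. Writing $t = (i,j)$ and replacing the integral over $E_2$ against the counting measure $\mu_2$ by the double sum over $k,l \in \{1,\dots,K_2\}$ yields
\[
\mathbf U[i,j] = u(i,j) = \sum_{k=1}^{K_2}\sum_{l=1}^{K_2} \mathbf G[i,j,k,l]\,\mathbf V[k,l], \qquad i,j = 1,\dots,K_1,
\]
which is exactly \eqref{eq:discrete_PIP}. The only point requiring a word of justification is that $T_2$ is intrinsic to the abstract Hilbert-space structure (Definition~\ref{def:PIP}), so that it is preserved under the isometric isomorphisms used to pass from $\R^{K_i \times K_i}$ to the corresponding $\mathcal L^2$ spaces; this follows because the defining identity $T_2(x\otimes y, f) = (x \otimes_2 y)f$ and the uniqueness argument below Definition~\ref{def:PIP} are phrased purely in terms of inner products, which are respected by isometries.

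There is essentially no obstacle here: the statement is a bookkeeping corollary, and the main (minor) care is in setting up the index conventions — making sure that the ``row index'' of $\mathbf G$ viewed as an element of $\mathcal H_1 \otimes \mathcal H_2$ is the pair $(i,j)$ ranging over $E_1$ and the ``column index'' is the pair $(k,l)$ ranging over $E_2$, consistent with how $\mathbf G \in \R^{K_1 \times K_1 \times K_2 \times K_2}$ acts. Once this identification is fixed, the formula is immediate from Proposition~\ref{prop:integral_rep_PIP}, and I would present the proof in two or three lines accordingly.
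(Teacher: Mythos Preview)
Your proposal is correct and follows exactly the approach implicit in the paper: the corollary is stated without a separate proof precisely because it is the specialization of Proposition~\ref{prop:integral_rep_PIP} to finite sets equipped with the counting measure, which turns the integral~\eqref{eq:integral_representation} into the double sum~\eqref{eq:discrete_PIP}. Your remark about the index bookkeeping and the intrinsic nature of $T_2$ under isometries is the only thing one might add, and it is accurate.
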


\begin{remark}
Definition~\ref{def:PIP} is more general than the respective definitions provided by \cite{bagchi2020,dette2020}. Still, the partial inner product can be defined for product spaces of more than two Hilbert spaces. See Appendix~\ref{appendix:PIP_general}  for more details.
\end{remark}

The following lemma explores some basic properties of the partial inner product.

\begin{lemma}\label{lem:PIP_basic_properties}
Let $\ope C \in \spa{S}_2(\spa H_1 \otimes \spa H_2)$, $\ope W_1 \in \spa{S}_2(\spa H_1)$ and $\ope W_2 \in \spa{S}_2(\spa H_2)$. Then,
\begin{enumerate}
    \item If $\ope C$ is separable, i.e.\ $\ope C = \ope A \ct \ope B$, then $T_2(\ope C,\ope W_2) = \langle \ope B, \ope W_2 \rangle \ope A$ and $T_1(C,W_1) = \langle \ope A, \ope W_1 \rangle \ope B$.
    \item If $\ope C, \ope W_1$ and $\ope W_2$ are positive semi-definite, then $T_2(\ope C,\ope W_2)$ and $T_1(\ope C,\ope W_1)$ are also positive semi-definite.
\end{enumerate}
\end{lemma}

The first part of Lemma~\ref{lem:PIP_basic_properties} exemplifies why the operators $T_1$ and $T_2$ are called partial inner products. If $\ope C \in \spa S_2(\spa H_1 \otimes \spa H_2)$ is not exactly separable, the partial inner product is at the core of the algorithm for finding a separable proxy to it. The necessity to choose (correctly scaled) weights $\ope W_1$ and $\ope W_2$ is bypassed via an iterative procedure.

\begin{proposition}\label{prop:convergence}
For $\ope C \in \spa H_1 \otimes \spa H_2$, let $\ope C=\sum_{j=1}^\infty \sigma_j \ope A_j \otimes \ope B_j$ be a decomposition such that $|\sigma_1| > |\sigma_2| \geq |\sigma_3| \geq \ldots$, $\{\ope A_j\}$ is an orthonormal basis of $\spa H_1$ and $\{\ope B_j\}$ is an orthonormal basis of $\spa H_2$.
Let $\ope V^{(0)} \in \spa H_2$ be such that $\langle \ope B_1, \ope V^{(0)} \rangle_{\spa H_2} > 0$. Then the sequences $\{ \ope U^{(k)} \}$ and $\{ \ope V^{(k)} \}$ formed via the recurrence relation
\[
\ope U^{(k+1)} = \frac{T_2(\ope C,\ope V^{(k)})}{\| T_2(\ope C,\ope V^{(k)}) \|} , \quad \ope V^{(k+1)} = \frac{T_1(\ope C,\ope U^{(k+1)})}{\| T_1(\ope C,\ope U^{(k+1)}) \|}
\]
converge to $\ope A_1$ and $\ope B_1$, respectively. The convergence speed is linear with the rate given by the spacing between $\sigma_1$ and $\sigma_2$.
\end{proposition}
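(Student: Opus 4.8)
The plan is to recognise the recursion as the classical power iteration in disguise: one full $U$--$V$ cycle acts on the coordinates of $V^{(k)}$ in the orthonormal basis $\{B_j\}$ exactly as one step of power iteration for the positive self-adjoint operator on $\mathcal H_2$ with eigenvalues $\sigma_j^2$ and eigenvectors $B_j$, so convergence is driven by the gap $|\sigma_1|>|\sigma_2|$. First I would make the action of the partial inner product on $C$ explicit. For fixed $f$ the map $G\mapsto T_2(G,f)$ is linear and, by the bound $\|T_2(G,f)\|_{\mathcal H_1}\le\|G\|_{\mathcal H}\|f\|_{\mathcal H_2}$ from \eqref{eq:bound_PIP}, bounded; likewise for $T_1$. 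Since the partial sums $\sum_{j=1}^n\sigma_j A_j\otimes B_j$ converge to $C$ in $\mathcal H=\mathcal H_1\otimes\mathcal H_2$, bilinearity together with Definition \ref{def:PIP} gives
\[
T_2(C,V)=\sum_{j=1}^\infty\sigma_j\langle B_j,V\rangle_{\mathcal H_2}A_j,\qquad T_1(C,U)=\sum_{j=1}^\infty\sigma_j\langle A_j,U\rangle_{\mathcal H_1}B_j,
\]
the series converging in $\mathcal H_1$, resp.\ $\mathcal H_2$. Replacing $(\sigma_1,A_1)$ by $(-\sigma_1,-A_1)$ if necessary (which alters neither the iterates, which depend only on $C$, nor the hypothesis on $V^{(0)}$), I may assume $\sigma_1>0$.

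Next I would pass to coordinates. Writing $V^{(k)}=\sum_j\beta_j^{(k)}B_j$ and $U^{(k)}=\sum_j\alpha_j^{(k)}A_j$ (Parseval, as both families are ONBs), the previous display turns the recursion into
\[
\alpha_j^{(k+1)}=\frac{\sigma_j\beta_j^{(k)}}{N_k},\qquad \beta_j^{(k+1)}=\frac{\sigma_j\alpha_j^{(k+1)}}{M_k}=\frac{\sigma_j^2\,\beta_j^{(k)}}{N_kM_k},
\]
where $N_k=\big(\sum_i\sigma_i^2(\beta_i^{(k)})^2\big)^{1/2}$ and $M_k=\big(\sum_i\sigma_i^2(\alpha_i^{(k+1)})^2\big)^{1/2}$. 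Since $\beta_1^{(0)}=\langle B_1,V^{(0)}\rangle>0$ and the factor $\sigma_1^2/(N_kM_k)$ is strictly positive, an easy induction shows $\beta_1^{(k)}>0$ for every $k$; in particular $N_k,M_k>0$, so the recursion is well defined, and unrolling yields $\beta_j^{(k)}=\sigma_j^{2k}\beta_j^{(0)}/P_k$ with $P_k>0$.

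Convergence is then straightforward. Put $\gamma_j^{(k)}:=\beta_j^{(k)}/\beta_1^{(k)}=(\sigma_j/\sigma_1)^{2k}\gamma_j^{(0)}$. Because $|\sigma_j|\le|\sigma_2|<\sigma_1$ for $j\ge2$,
\[
\Big\|\tfrac{V^{(k)}}{\beta_1^{(k)}}-B_1\Big\|^2=\sum_{j\ge2}\big|\gamma_j^{(k)}\big|^2\le\Big(\tfrac{|\sigma_2|}{\sigma_1}\Big)^{4k}\sum_{j\ge2}\big|\gamma_j^{(0)}\big|^2,\qquad \sum_{j\ge2}\big|\gamma_j^{(0)}\big|^2=\frac{1-(\beta_1^{(0)})^2}{(\beta_1^{(0)})^2}<\infty,
\]
so $V^{(k)}/\beta_1^{(k)}\to B_1$ geometrically; as $\|V^{(k)}\|=1$ this forces $\beta_1^{(k)}=\big(1+\sum_{j\ge2}|\gamma_j^{(k)}|^2\big)^{-1/2}\to1$, and a triangle inequality gives $\|V^{(k)}-B_1\|=O\big((|\sigma_2|/\sigma_1)^{2k}\big)$. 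Finally, by boundedness of $f\mapsto T_2(C,f)$ we have $T_2(C,V^{(k)})\to T_2(C,B_1)=\sigma_1A_1\ne0$, whence $U^{(k+1)}=T_2(C,V^{(k)})/\|T_2(C,V^{(k)})\|\to\sigma_1A_1/\sigma_1=A_1$ at the same linear rate; alternatively one runs the coordinate computation directly for $\alpha_j^{(k)}\propto\sigma_j^{2k-1}\beta_j^{(0)}$, the sign of the leading coordinate being fixed by $\sigma_1>0$.

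The main obstacle is not conceptual — once one sees that a full cycle is power iteration for $T_1(C,T_2(C,\cdot))$, which is diagonalised by $\{B_j\}$ with eigenvalues $\sigma_j^2$, the convergence is classical. The care is in the infinite-dimensional bookkeeping: justifying the termwise action of $T_1,T_2$ on the series via the continuity estimate \eqref{eq:bound_PIP}, verifying that the normalising constants never vanish and that the leading coordinate $\beta_1^{(k)}$ retains a fixed sign (so that the limit is $B_1$, resp.\ $A_1$, rather than merely $\pm B_1$), and tracking the geometric factor carefully to read off the linear rate in terms of the spacing between $\sigma_1$ and $\sigma_2$.
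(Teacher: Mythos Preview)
Your proof is correct and follows essentially the same route as the paper's: both expand $V^{(k)}$ in the basis $\{B_j\}$, show that the coordinates are proportional to $\sigma_j^{2k}\beta_j^{(0)}$ (i.e.\ one full $U$--$V$ cycle is power iteration for the operator with eigenvectors $B_j$ and eigenvalues $\sigma_j^2$), and bound the tail geometrically in $|\sigma_2|/|\sigma_1|$. You are slightly more careful than the paper about justifying the termwise action of $T_1,T_2$ via the continuity bound \eqref{eq:bound_PIP} and about well-definedness and signs, but the argument is the same.
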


The assumption $\langle \ope B_1, \ope V^{(0)} \rangle_{\spa H_2} > 0$ in the previous proposition can be weakened to $\langle \ope B_1, \ope V^{(0)} \rangle_{\spa H_2} \neq 0$. In that case, the sequences do not necessarily converge, but all limit points span the appropriate spaces. The proof is similar, except for some care taken at the level of the signs. The sign ambiguity is caused by the fact that the factors $\ope A_j$ and $\ope B_j$ are (even in the case of non-zero spacing between the scores $\{ \sigma_j \}$) unique only up to the sign.

\begin{example}
Let $\ope C$ be the covariance of $\ope X \in \spa{H}_1^\prime \otimes \spa{H}_2^\prime$. Choosing $\spa H_1 = \spa S_2(\spa H_1^\prime)$ and \mbox{$\spa H_2 = \spa S_2(\spa H_2^\prime)$}, the previous proposition shows that the best separable approximation of the covariance, i.e.\ a solution to \eqref{eq:best_Rsep} with $R=1$, can be found via the power iteration method, consisting of a series of partial inner products. Choosing instead $\spa H_1 = \spa H_2 = \spa H_1'\otimes \spa H_2'$, the singular value decomposition in Proposition~\ref{prop:convergence} becomes the eigendecomposition. The previous proposition shows that the leading eigenvalue-eigenvector pair can also be found via the power iteration method.
\end{example}

In the present paper, we expand our attention beyond separability, to the solution of \eqref{eq:best_Rsep} with $R \in \N$, i.e.\ searching for the best $R$-separable approximation.
This optimisation problem can be solved via Algorithm~\ref{alg1}, which is a
generalization of the power iteration method \citep[for finding the singular value decomposition, cf.][]{vanloan1983} to general Hilbert spaces. It can be used to find the best separable approximation and also the best $R$-separable approximation of a~covariance operator $\ope C$, as shown in Algorithm~\ref{alg1}.

\begin{algorithm}[b]
\caption{Power iteration method for separable expansion on a general Hilbert space.}\label{alg1}
\begin{description}
\item[Input] $\ope C \in \spa{S}_2(\spa H_1 \otimes \spa H_2)$, initial guesses $\ope A_1,\ldots,\ope A_R \in \spa H_1 \otimes \spa H_1$
\item[for] r=1,\ldots,R

$\widetilde{\ope C} = \ope C - \sum_{j=1}^{r-1} \sigma_j \ope A_j \ct \ope B_j$
\begin{description}
\item[repeat]\mbox{}
  \begin{description}
  \item[] $ \ope B_r = T_1(\widetilde{\ope C}, \ope A_r)$
  \item[] $ \ope B_r = \ope B_r/\|\ope B_r \|$
  \item[] $ \ope A_r = T_2(\widetilde{\ope C},\ope B_r)$
  \item[] $\sigma_r \,= \| \ope A_r \|$
  \item[] $\ope A_r = \ope A_r/\sigma_r$
  \end{description}
\item[until convergence]
\end{description}
\item[end for]
\item[Output] $\sigma_1,\ldots,\sigma_R$, $\ope A_1,\ldots,\ope A_R$, $\ope B_1,\ldots \ope B_R$
\end{description}
\end{algorithm}

\subsection{Estimation} \label{sec:estimation}

Let $\ope X_1, \ldots, \ope X_N$ be i.i.d.\ elements in  $\spa H_1 \otimes \spa H_2$, and $R \in \mathbb{N}$ be given (the choice of $R$ will be discussed in Section~\ref{sec:rank}). We propose the following estimator for the covariance operator:
\[
\widehat{\ope C}_{R,N} = \argmin_{\ope G} \verti{\widehat{\ope C}_N - \ope G}_2^2 \quad \textrm{s.t.} \quad \text{DoS}(\ope G) \leq R,
\]
where $\widehat{\ope C}_N = N^{-1} \sum_{n=1}^N (\ope X_n - \bar{\ope X}) \otimes (\ope X_n - \bar{\ope X})$ is the empirical covariance. The estimator $\widehat{\ope C}_{R,N}$ is the best $R$-separable approximation to the empirical covariance $\widehat{\ope C}_N$, and has the form
\begin{equation}\label{eq:estimator}
\widehat{\ope C}_{R,N} = \sum_{r=1}^R \widehat{\sigma}_r \widehat{\ope A}_r \ct \widehat{\ope B}_r,
\end{equation}
where $\widehat \sigma_1,\ldots,\widehat \sigma_R$, $\widehat{\ope A}_1,\ldots,\widehat{\ope A}_R$, $\widehat {\ope B}_1,\ldots \widehat{\ope B}_R$ are the output of Algorithm~\ref{alg1} applied to the empirical covariance estimator $\widehat{\ope C}_N$ as the input.

The key observation here is that the partial inner product operations required to construct \eqref{eq:estimator} by means of Algorithm~\ref{alg1} can be carried out directly at the level of the data, without the need to explicitly form or store the empirical covariance estimator $\widehat{\ope C}_N$. In the discrete case, for example, the data arrive as matrices $\mat X_1, \ldots, \mat X_N \in \R^{K_1 \times K_2}$. Then the covariance and the estimator \eqref{eq:estimator} are also discrete, namely $\mat C, \widehat{\mat C}_{R,N} \in \R^{K_1 \times K_2 \times K_1 \times K_2}$. In the theoretical development of Section~\ref{sec:analysis}, we differentiate between multivariate data and functional data observed on a dense grid (of size $K_1 \times K_2$), the latter being our primary interest. But for now, the distinction is immaterial: we assume we are in the discrete case to exemplify the computational benefits of the partial inner product. It is straightforward to verify from Corollary~\ref{lem:discrete_PIP} that
\begin{equation}\label{eq:PIP_on_data}
\begin{split}
T_1(\widehat{\mat C}_N,\mat A) &= \frac{1}{N}\sum_{n=1}^N T_1\Big( (\mat X_n - \bar{\mat X}) \otimes (\mat X_n - \bar{\mat X}),\mat A\Big) = \frac{1}{N}\sum_{n=1}^N \t{(\mat X_n - \bar{\mat X})} \mat A (\mat X_n - \bar{\mat X}),\\
T_2(\widehat{\mat C}_N,\mat B) &= \frac{1}{N}\sum_{n=1}^N T_2\Big((\mat X_n - \bar{\mat X}) \otimes (\mat X_n - \bar{\mat X}),\mat B\Big) = \frac{1}{N}\sum_{n=1}^N (\mat X_n - \bar{\mat X}) \mat B \t{(\mat X_n - \bar{\mat X})}.
\end{split}
\end{equation}
Utilizing these formulas together with Lemma~\ref{lem:PIP_basic_properties}, one obtains Algorithm~\ref{alg2}, working efficiently at the level of data.

The immense popularity of the separability assumption stems from the computational savings it entails. These are captured in Table~\ref{tab:complexities}, assuming $K_1 = K_2 = K$ for simplicity. Notice the reduced estimation complexity of the separable model. Since the convergence rate of the power iteration method is linear, and a single iteration can be evaluated efficiently at the level of data due to \eqref{eq:PIP_on_data}, our approach can also be used to estimate the $R$-separable model with the same efficiency (times $R$, of course). Moreover, we will show in the next section that it is possible to apply and (numerically) invert an $R$-separable covariance \eqref{eq:estimator} with the same computational costs as for a separable model. Hence the complexities reported in Table~\ref{tab:complexities} for a separable model are also valid for an $R$-separable approximation.

\begin{algorithm}[b]
\caption{Constructing the $R$-separable estimator from discrete measurements.}\label{alg2}
\begin{description}
\item[Input] $\mat X_1,\ldots,X_N \in \R^{K_1 \times K_2}$, initial guesses $A_1,\ldots,A_R \in \R^{K_1 \times K_2}$
\item[for] r=1,\ldots,R
\begin{description}
\item[repeat]\mbox{}
  \begin{description}
  \item[] $B_r = \frac{1}{N}\sum_{n=1}^N \t{(\mat X_n - \bar{\mat X})} \mat A_r (\mat X_n - \bar{\mat X}) - \sum_{j=1}^{r-1} \sigma_j \langle \mat A_r, \mat A_j\rangle \mat A_j$
  \item[] $B_r = B_r/\|B_r \|$
  \item[] $A_r = \frac{1}{N}\sum_{n=1}^N \t{(\mat X_n - \bar{\mat X})} \mat B_r (\mat X_n - \bar{\mat X}) - \sum_{j=1}^{r-1} \sigma_j \langle \mat B_r, \mat B_j\rangle \mat B_j$
  \item[] $\sigma_r \,= \|A_r \|$
  \item[] $A_r = A_r/\sigma_r$
  \end{description}
\item[until convergence]
\end{description}
\item[end for]
\item[Output] $\sigma_1,\ldots,\sigma_R$, $A_1,\ldots,A_R$, $B_1,\ldots B_R$
\end{description}
\end{algorithm}

\begin{remark}
To the best of our knowledge, the only approach to efficient estimation of a separable model, which reduces the estimation complexity to that reported in Table~\ref{tab:complexities}, is partial tracing of \cite{aston2017}. Partial tracing achieves this complexity by considering only some of the available raw covariances (i.e.\ the cross-products of two sampled entries on the same surface). In contrast, our approach uses all the available raw covariances, composing the empirical estimator. The computational savings are achieved by reducing the number of necessary operations via the formulas in \eqref{eq:PIP_on_data}. Moreover, our estimation procedure facilitates the search for an approximation (either $R$-separable or entirely separable), while the partial tracing estimator lacks any optimality properties, and assumes separability as a model. This can be said also for other approaches built upon partial tracing, such as the weakly separable model \citep{lynch2018}.
\end{remark}

\begin{table}[!t]
    \centering
    \caption{Time and memory complexities of computing the empirical covariance estimator and a separable estimator when $N$ surfaces are observed discretely on a $K \times K$ grid.}
    \label{tab:complexities}
    \begin{tabular}{ccccccc}
    \toprule
      && Memory complexity && & Time complexity&  \\
      \cmidrule(lr){4-7}
      Method && && Estimation & Application & Inversion \\
      \midrule
      empirical && $\mathcal{O}(K^4)$ && $\mathcal{O}(N K^4)$ & $\mathcal{O}(K^4)$ & $\mathcal{O}(K^6 )$  \\
      separable && $\mathcal{O}(K^2)$ && $\mathcal{O}(N K^3)$ & $\mathcal{O}(K^3)$ & $\mathcal{O}(K^3)$   \\
     \bottomrule
    \end{tabular}
\end{table}

\subsection{Inversion and Prediction}\label{sec:inversion}

Here, we consider the inversion of the estimated covariance. In particular, we are interested in solving a linear system
\begin{equation}\label{eq:system}
\widehat{\mat C}_{R,N} \mat X = \mat Y,
\end{equation}
where $\widehat{\mat C}_{R,N} \in \R^{K \times K \times K \times K}$ is our $R$-separable estimator of equation \eqref{eq:estimator}. The inversion is needed for the purposes of prediction or kriging, among other tasks. We develop an efficient iterative procedure to solve \eqref{eq:system}. The crucial observation is that  $\widehat{\mat C}_{R,N}$ can be applied in $\O(K^3)$ operations:
\begin{equation}\label{eq:fast_application}
\widehat{\mat C}_{R,N} \mat  X = \sum_{r=1}^R \widehat{\sigma}_r \, (\widehat{\mat A}_r \ct \widehat{\mat B}_r) \mat X = \sum_{r=1}^R \widehat{\sigma}_r \widehat{\mat A}_r \mat  X \widehat{\mat B}_r .
\end{equation}
The previous equation follows from the fact that $(\mat A \ct \mat B) \mat X = \mat A \mat X \mat B$ for $\mat B$ self-adjoint, which can be verified directly from the definitions. Due to \eqref{eq:fast_application}, we can rewrite the linear system \eqref{eq:system} as
$
\widehat{\sigma}_1 \widehat{\mat A}_1 \mat X \widehat{\mat B}_1 + \ldots + \widehat{\sigma}_R \widehat{\mat A}_R \mat X \widehat{\mat B}_R = \mat Y
$, which is a \emph{linear matrix equation}. Even though there exist provably convergent specialized solvers for equations of this type, the simple preconditioned conjugate gradient \citep{vanloan1983} is the method of choice when $\widehat{\mat C}_{R,N}$ is symmetric and positive semi-definite. In our case, $\mat P = \widehat{\sigma}_1 \widehat{\mat A}_1 \ct \widehat{\mat B}_1$ is a natural preconditioner, whose square-root $\mat V$ can be both obtained and applied easily \citep{vanloan1983}.
The more dominant the leading term $\widehat{\sigma}_1 \widehat{\mat A}_1 \ct \widehat{\mat B}_1$ is in $\widehat{\mat C}_{R,N}$, the fewer conjugate gradient iterations will be needed. This is a manifestation of a certain statistical-computational trade-off. The $R$-separable model can in theory fit any covariance $C$, when $R$ is taken large enough. However, the more dominant the leading (separable) term is, the better computational properties we have, see Appendix~\ref{sec:further_simulations}.

It remains to address the existence of a solution to the linear system \eqref{eq:system}. Note that we cannot guarantee that $\widehat{\mat C}_{R,N}$ is positive semi-definite. Lemma~\ref{lem:PIP_basic_properties} says that $\widehat{\mat A}_{1}$ and $\widehat{\mat B}_{1}$ are positive semi-definite, and they will typically be positive definite for $N$ sufficiently large (when $\widehat{\mat C}_N$ is positive definite). But $\widehat{\mat C}_{N} - \widehat{\sigma}_1 \widehat{\mat A}_{1} \ct \widehat{\mat B}_{1}$ is necessarily indefinite, and hence we cannot say anything about the remaining terms.
However, $\widehat{\mat C}_{R,N}$ is a consistent estimator of the true $\mat C$ for sufficiently large values of $R$ (see Section~\ref{sec:analysis} for a discussion on the rate of convergence of this estimator depending on $R$). So, for a large enough sample size and an appropriate degree $R$, $\widehat{\mat C}_{R,N}$ cannot be far from being positive semi-definite. To eliminate practical anomalies, we will \emph{positivize} the estimator.

Due to \eqref{eq:fast_application}, the power iteration method can be used to find the leading eigenvalue $\lambda_{\text{max}}$ of $\widehat{\mat C}_{R,N}$ in $\O(K^3)$ operations. We can then find the smallest eigenvalue $\lambda_{\text{min}}$ of $\widehat{\mat C}_{R,N}$ by applying the power iteration method to $\lambda_{\text{max}} \mat I - \widehat{\mat C}_{R,N}$, where $\mat I \in \R^{K_1 \times K_2 \times K_1 \times K_2}$ is the identity. Subsequently, if $\lambda_{\text{min}}<0$, we can perturb $\widehat{\mat C}_{R,N}$ to obtain its positive semi-definite version:
$
\widehat{\mat C}_{R,N}^+ = \widehat{\mat C}_{R,N} + (\epsilon - \lambda_{\text{min}}) \mat I
$,
where $\epsilon \geq 0$ is a potential further regularization.

The positivized estimator $\widehat{\mat C}_{R,N}^+$ is $(R+1)$-separable, so it can still be approached in the same spirit, with one exception. If the inverse problem is ill-conditioned and regularization is used, the preconditioner discussed above is no longer effective, since $\mat A_1$ or $\mat B_1$ may not be invertible. In this case, we use the preconditioner $\mat P = \widehat{\sigma}_1 \widehat{\mat A}_1 \ct \widehat{\mat B}_1 + (\epsilon - \lambda_{\text{min}}) \mat I$. This is a preconditioning via the discrete Stein equation, see \cite{masak2019}. The effectiveness of the proposed inversion algorithm is demonstrated in Appendix~\ref{sec:further_simulations}.

\begin{remark}\label{rem:regularization}
We stress here that the potential need to regularize an estimator of a low degree-of-separability arises in a very different way compared to the necessity to regularize a (more typical) low-rank estimator. When the truncated eigendecomposition is used as an estimator, the spectrum is by construction singular and regularization is thus necessary for the purposes of prediction. Contrarily, when an estimator of low degree-of-separability is used, the spectrum of the estimator mimics that of $C$ more closely. If $C$ itself is well-conditioned, there may be no need to regularize, regardless of what degree $R$ is used as a cut-off. However, in the case of functional data observed on a dense grid, regularization may be necessary due to the spectral decay of $C$~itself.
\end{remark}

As an important application, we use the $R$-separable estimator \eqref{eq:estimator} to predict the missing values of a datum $\mat X \in \R^{K_1 \times K_2}$. In the case of a random vector $\mat x$, such that
\[
\mat x = \t{\big(\mat x_{\rm mis}^\top, \mat x_{\rm obs}^\top \big)} \sim \left(0,
\begin{pmatrix}
\Sigma_{11} & \Sigma_{12} \\
\Sigma_{12} & \Sigma_{22}
\end{pmatrix}
\right),
\]
where $\mat x_{\rm mis}$ is missing and $\mat x_{\rm obs}$ is observed, the best linear unbiased predictor of $\mat x_{\rm mis}$ given $\mat x_{\rm obs}$ is $
\widehat{\mat x}_{\rm mis} = \Sigma_{12} \Sigma_{22}^{-1} \mat x_{\rm obs}
$.
The goal here is to show that this predictor is calculable within the set computational limits, which prevents us from naively vectorizing $\mat X \in \R^{K_1 \times K_2}$, as above, and using the matricization of $\widehat{\mat C}_{R,N}$ in place~of~$\Sigma$.

Assume initially that $\mat X$ is observed up to whole columns indexed by the set $I$ and whole rows indexed by the set $J$. We can assume that $I = \{1,\ldots, m_1\}$ and $J = \{1,\ldots, m_2\}$ (otherwise we can permute the rows and columns to make it so). Denote the observed submatrix of $\mat X$ as $\mat X_\text{obs}$. If the covariance of $\mat X$ is $R$-separable, so is the covariance of $\mat X_\text{obs}$, specifically
$
\cov(\mat X_\text{obs}) = \sum_{r=1}^R \sigma_r \mat A_{r,22} \ct \mat B_{r,22}
$,
where $\mat A_{r,22}$ (resp. $\mat B_{r,22}$) are the bottom-right sub-matrices of $\mat A_r$ (resp. $\mat B_r$) of appropriate dimensions. Hence the inversion algorithm discussed above can be used to efficiently calculate $\Sigma_{22}^{-1} \mat x_{\rm obs}$, where $\mat x_{\rm obs}$ is the vectorization of $\mat X_\text{obs}$. It remains to apply the cross-covariance $\Sigma_{12}$ to this element. Since $\mat z = \Sigma_{12} \mat y$ can be calculated as
\begin{equation}\label{eq:trick_zeros}
\begin{pmatrix}
\mat z \\ \star
\end{pmatrix}
= \begin{pmatrix}
\Sigma_{11} & \Sigma_{12} \\
\Sigma_{12} & \Sigma_{22}
\end{pmatrix}
\begin{pmatrix}
0 \\ \mat y
\end{pmatrix},
\end{equation}
we can apply the entire $\widehat{\mat C}_{R,N}$ to $\Sigma_{22}^{-1} \mat x_{\rm obs}$ enlarged  to the appropriate dimensions by the suitable adjunction of zeros.

If an arbitrary pattern $\Omega$ in $\mat X$ is missing (i.e.\ $\Omega$ is a bivariate index set), we make use of the previous trick also when calculating $\Sigma_{22}^{-1} \mat x_{\rm obs}$. The preconditioned conjugate gradient algorithm discussed above only requires a fast application of $\Sigma_{22}$. This can be achieved by applying the whole $\widehat{\mat C}_{R,N}$ to $\widetilde{\mat X}$, with entries $\widetilde{\mat X}[i,j] = \mat X[i,j] \1\{(i,j) \in \Omega\}$.
The same trick can be used to apply the cross-covariance to the solution of the inverse problem. Hence the predictor can be calculated efficiently for an arbitrary missing pattern in $\mat X$.

\subsection{Degree-of-separability Selection}\label{sec:rank}

Given $R\in \N$, we have demonstrated how to construct and invert an $R$-separable proxy of the covariance. It now remains to discuss how to choose the degree-of-separability $R$. Recall that we do not assume that the covariance in question is $R$-separable, so there is no a-priori ``correct" choice of $R$. In this context, $R$ can be seen as governing the effective number of parameters being estimated from the data (the ``degrees of freedom", see Remark~\ref{remark:choice_of_R}), serving in effect as a~regularization parameter. So, one can
seek a positive integer $R$ that minimizes the mean squared error
\begin{equation}\label{eq:mean_squared_error}
\E \verti{\widehat{\mat C}_{R,N} - \mat C}_2^2.
\end{equation}
The underlying bias-variance trade-off is precisely the reason why small values of $R$ can lead to improved mean squared error compared to the empirical covariance estimator. %Intuitively, with increasing $R$, the estimator $\widehat{C}_{R,N}$ has increasing number of degrees of freedom, and more observations are needed to estimate it reliably (see Remark~\ref{remark:choice_of_R})

To determine an empirical surrogate of \eqref{eq:mean_squared_error} and devise a cross-validation strategy, note first that
\[
\E \verti{\widehat{\mat C}_{R,N} - \mat C}_2^2 = \E \verti{\widehat{\mat C}_{R,N}}_2^2 - 2 \E\langle \widehat{\mat C}_{R,N} , \mat C \rangle + \verti{\mat C}_2^2.
\]
Of the three terms on the right-hand side of the previous equation, the first term is estimated straightforwardly by dropping the expectation, while the final term does not depend on $R$ and hence it does not affect the minimization. Hence, it remains to estimate the middle term. Denote by $\widehat{\mat C}_{R,N-1}^{(j)}$ the $R$-separable estimator constructed excluding the $j$-th datum $\mat X_j$. Due to independence between samples, we have
\[
\E \langle \mat X_j, \widehat{\mat C}_{R,N-1}^{(j)} \mat X_j \rangle = \E \langle \widehat{\mat C}_{R,N-1}^{(j)}, \mat X_j \otimes \mat X_j \rangle = \langle \E \widehat{\mat C}_{R,N-1}^{(j)}, \E \mat X_j \otimes \mat X_j \rangle \approx \E \langle \widehat{\mat C}_{R,N} , C\rangle ,
\]
and hence the quantity $N^{-1} \sum_{j=1}^N \langle \mat X_j, \widehat{\mat C}_{R,N-1}^{(j)} \mat X_j \rangle$ approximates $\E\langle \widehat{\mat C}_{R,N} , \mat C \rangle$. In summary, the strategy is to choose $R$ as
\begin{equation}\label{eq:CV_objective}
\argmin_{R} \quad \verti{\widehat{\mat C}_{R,N}}_2^2 - \frac{2}{N} \sum_{j=1}^N \langle \mat X_j, \widehat{\mat C}_{R,N-1}^{(j)} \mat X_j \rangle.
\end{equation}
This procedure corresponds to leave-one-out cross-validation. In practice, we perform e.g.\ a~10-fold cross-validation. The objective of \eqref{eq:CV_objective} can be evaluated efficiently, since $\vertj \widehat{\mat C}_{R,N}\vertj_2^2 = \sum_{r=1}^R \widehat{\sigma}^2$, and $\widehat{\mat C}_{R,N-1}^{(j)} \mat X_j$ is calculated via \eqref{eq:fast_application}. Hence the proposed procedure is within our computational limits.

The choice \eqref{eq:CV_objective} is analogous to the classical cross-validated bandwidth selection scheme in kernel density estimation \citep{wand1994}. The typical scheme for principal component analysis is based on finding a low-dimensional plane fitting the data cloud well -- the low-dimensional plane being tied to the principal components \citep[e.g.][]{Jolliffe:1986}. Such a scheme is not applicable for $R$, because it degenerates: the first term in the separable expansion alone may very well span the whole ambient space (cf. Remark~\ref{rem:regularization}), thus projecting a datum on a subspace generated by a varying number of separable terms will not be informative. On the other hand, one can adopt some well-known semi-automated procedures such as the scree plot or percentage of variance explained \citep{Jolliffe:1986}, since the separable expansion scores allow for this interpretation.
%and the total variance can be calculated efficiently as $\vertj \widehat{\mat C}_{N}\vertj_2^2 = \frac{1}{N^2} \sum_{n=1}^N \sum_{j=1}^N \langle X_n , X_j \rangle^2$

\section{Asymptotic Theory}\label{sec:analysis}
%Here, we establish the consistency of our estimator and derive its rate of convergence. We separately consider two cases, the case of fully observed data and the case of discretely observed data.

\subsection{Fully Observed Functional Data}

In the fully observed case, our sample consists of i.i.d.\ observations $\ope X_1,\ldots,\ope X_N \sim \ope X$, where $\ope X$ is a random element on $\spa H = \spa H_1 \otimes \spa H_2$. Recall that our estimator $\Chat_{R,N}$ given in \eqref{eq:estimator} is the best $R$-separable approximation of the sample covariance operator $\Chat_N$. %= N^{-1}\sum_{n=1}^N (X_n - \overline{X}) \otimes (X_n - \overline{X})$, where $\overline{X} = N^{-1}\sum_{n=1}^N X_n$ is the sample mean. 
Under the assumption of finite fourth moment, we get the following rate of convergence for our estimator.
\begin{theorem}\label{thm:fully_observed}
Let $\ope X_1,\ldots,\ope X_N \sim \ope X$ be a collection of i.i.d.\ random elements of \mbox{$\spa H = \spa H_1 \otimes \spa H_2$} with $\E(\|\ope X\|^4) < \infty$. Further assume that the covariance of $\ope X$ has separable expansion $\ope C = \sum_{i=1}^\infty \sigma_i \ope A_i \ct \ope B_i$, with $\sigma_1 > \cdots > \sigma_R > \sigma_{R+1} \ge \cdots \ge 0$. Define \mbox{$\alpha_i = \min\{\sigma_{i-1}^2 - \sigma_i^2, \sigma_i^2 - \sigma_{i+1}^2\}$} for $i=1,\ldots,R$, and let $a_R = \verti{\ope C}_2 \sum_{i=1}^R (\sigma_i/\alpha_i)$. Then,
\begin{equation}\label{eq:bias_variance_split}
\verti{\Chat_{R,N} - \ope C}_{2} \le \left(\,\sum_{i=R+1}^\infty \sigma_i^2\right)^{1/2} + {\cal O}_{\Prob}\bigg(\frac{a_R}{\sqrt{N}}\bigg).
\end{equation}
\end{theorem}

The first term on the right-hand side of \eqref{eq:bias_variance_split} can be viewed as the {\em bias} of our estimator, which appears because we estimate an $R$-separable approximation of a general $\ope C$. Since $\ope C$ is a Hilbert-Schmidt operator, this term converges to $0$ as $R$ increases. If $\ope C$ is actually $R$-separable then this vanishes. The second term signifies the estimation error of the $R$-separable approximation. Similarly to principal component analysis \citep[see][Section~5.2]{hsing2015}, the error depends on the {\em spectral gap} $\alpha_i$ of the covariance operator. In our case, the spectral gap is in terms of the sequence of squared scores $(\sigma_i^2)_{i \ge 1}$.

The derived rate clearly shows the bias-variance trade-off. While the bias term is a decreasing function of $R$, generally the variance term (governed by $a_R$) is an increasing function of $R$. This emphasizes the need to choose an appropriate $R$ in practice. The actual trade-off depends on the decay of the separable expansion scores. In particular, if the scores decay slowly then we can estimate a relatively large number of terms in the separable expansion, but the estimation error will be high. On the other hand, when we have a fast decay in the scores, we can estimate a rather small number of terms, but with much better precision (see Appendix~\ref{subsec:optimal_R_convex}). In practice, we expect only a few scores to be significant and $\ope C$ to have a relatively low degree-of-separability. The theorem shows that in such situations, our estimator enjoys a convergence rate of $\O_{\Prob}(N^{-1/2})$, which is the same as that of the empirical estimator.

\begin{remark}\label{remark:choice_of_R}
The theorem also gives us convergence rates when we allow $R = R_N$ to increase as a function of $N$. For that we need to assume some structure on the decay of the separable expansion scores. For instance, if we assume that the scores satisfy a convexity condition \mbox{\citep{jirak2016}}, then it turns out that $a_R = {\cal O}(R^2/\sigma_R)$, while $\sum_{i > R} \sigma_i^2 = {\cal O}(R^{-1/2})$. In that case, for consistency, we need $R_N \to \infty$ as $N \to \infty$ with $\sigma_{R_N}^{-1}R_N^2 = {\scriptstyle \mathcal{O}}(N^{1/2})$. Also, the optimal rate is achieved upon choosing $R_N$ such that $\sigma_{R_N}^{-4}R_N^3 \asymp N$ (c.f.\ Appendix~\ref{subsec:optimal_R_convex}). Thus, the {\em admissible} and {\em optimal} rates of $R$ depend on the decay rate of the scores. In particular, if the scores exhibit an exponential decay, i.e.\ $\sigma_R \sim R^{-\tau}$ with $\tau > 1$, then for consistency we need $R_N = {\scriptstyle \mathcal O}(N^{1/(2\tau+4)})$, with the optimal rate being $R_N \asymp N^{1/(4\tau+3)}$, leading to 
\[
\verti{\Chat_{R,N} - \ope C}_{2} = {\cal O}_{\Prob}\Big(N^{-\frac{2\tau-1}{4\tau+3}}\Big).
\]
While the optimal rate for $R_N$ is a decreasing function of $\tau$, the rate of convergence of the error is an increasing function. Thus, our estimator is expected to have better performance when the scores have a fast decay (i.e.\ $\tau$ is large), which means that the true covariance is nearly separable.

Under polynomial decay of the scores, i.e.\ $\sigma_R \sim R^\tau \rho^{-R}$ with $0<\rho<1, \tau \in \mathbb{R}$, consistency is achieved when $R_N^{2-\tau}\rho^{R_N} = {\scriptstyle \mathcal O}(N^{1/2})$, while the optimal rate is obtained by solving $R_N^{3-4\tau}\rho^{4R_N} \asymp N$. So, in this case, we cannot hope to estimate more than $\log(N)$ terms in the separable expansion. These corroborate our previous claim that we can only expect to consistently estimate a small number of terms in the separable expansion.
\end{remark}

Our derived rates are genuinely nonparametric, in the sense that we have not assumed any structure whatsoever on the true covariance. We have only assumed that $\ope X$ has finite fourth moment, which is standard for covariance estimation. We can further relax that condition if we assume that $\ope C$ is actually $R$-separable. From the proof of the theorem, it is easy to see that if we assume $\rm{DoS}(\ope C) \le R$, then our estimator is consistent under the very mild condition of $\E(\|\ope X\|^2) < \infty$.

\subsection{Data Observed on a Regular Grid}

In practice, the data are observed discretely with possible noise contamination. We now develop asymptotic theory in that context. Specifically, assume that $\ope X = \{\fun X(t,s): t \in {\set T}, s \in {\set S}\}$ is a random surface taking values in $\spa H = \spa L_2(\set T \times \set S)$, where $\set T$ and $\set S$ are compact sets. To simplify notation, we assume without loss of generality that ${\set T} = {\set S} = [0,1]$. We observe the data at $K_1 \times K_2$ regular grid points or \emph{pixels}. Let $\{T_1^{K_1},\ldots,T_{K_1}^{K_1}\}$ and $\{S_1^{K_2},\ldots,S_{K_2}^{K_2}\}$ denote regular partitions of $[0,1]$ of lengths $1/K_1$ and $1/K_2$, respectively. We denote by $I_{i,j}^K = T_i^{K_1} \times S_j^{K_2}$ the $(i,j)$-th pixel for $i=1,\ldots,K_1,j=1,\ldots,K_2$. The pixels are non-overlapping (i.e., $I_{i,j}^K \cap I_{i^\prime,j^\prime}^K = \emptyset$ for $(i,j) \ne (i^\prime,j^\prime)$) and have the same volume $|I_{i,j}^K| = 1/(K_1K_2)$. For each surface $\ope X_n,n=1,\ldots,N$, we make one measurement at each of the pixels. The measurements at $I_{i,j}^K$ are assumed to be of the form
\begin{equation}\label{eq:measurement_with_noise_grid}
    Y_n^K[i,j] = X_n^K[i,j] + E_n^K[i,j], \qquad n=1,\ldots,N,i=1,\ldots,K_1,j=1,\ldots,K_2.
\end{equation}
Here, $\mat X_n^K = (X_n^K[i,j])$ is a discretized version of the surface $\ope X_n$ and $\mat E_n^K = (E_n^K[i,j])$ is measurement error or noise. The measurements for the $n$-th surface can be represented by the $K_1 \times K_2$ matrix $\mat Y_n^K = \mat X_n^K + \mat E_n^K$. We will consider two different schemes, which relate the surfaces $\ope X_n$ to their discrete versions $\mat{X}_n^K$.
\begin{enumerate}
\item[({\bf S1})] Pointwise evaluation within each pixel, i.e.
$
\mat{X}_n^K[i,j] = \fun X_n(t_i^{K_1},s_j^{K_2})$, for $i=1,\ldots,K_1$, $j=1,\ldots,K_2
$,
where $(t_i^{K_1},s_j^{K_2}) \in I_{i,j}^K$ are spatio-temporal locations. Note that the square integrability of $\ope X$ is not sufficient for such pointwise evaluations to be meaningful, so we will assume that $\ope X$ has continuous sample paths in this case \citep[see][]{hsing2015}.
\item[({\bf S2})] Averaged measurements over each pixel, i.e.
\[
\mat{X}_n^K[i,j] = \frac{1}{|I_{i,j}^K|} \iint_{I_{i,j}^K} \fun X_n(t,s) dt ds, \quad i=1,\ldots,K_1, \; j=1,\ldots,K_2.
\]
\end{enumerate}
For the measurement errors $E_n^K[i,j],n=1,\ldots,N,i=1,\ldots,K_1,j=1,\ldots,K_2$, we assume that they are i.i.d., independent of $\mat X_1^K,\ldots,\mat X_N^K$, have mean zero and finite fourth order moment.

We write $\ope Y_n^K = (\fun Y_n^K(t,s): t \in \set T, s \in \set S)$ with $\fun Y_n^K(t,s) = \sum_{i=1}^{K_1}\sum_{j=1}^{K_2} Y_n^K[i,j] \,\1_{\{(t,s) \in I_{i,j}^K\}}$ to denote the \emph{pixelated version} of $\mat Y_n^K$. Let $\widehat{\ope C}_N^K$ be the empirical covariance of $\ope Y_1^K,\ldots,\ope Y_N^K$. It is easy to see that $\widehat{\ope C}_N^K$ is the pixel-wise continuation of $\widehat{\mat C}_N^K$, the empirical covariance of $\mat Y_1^K,\ldots,\mat Y_N^K$. In the discrete observation scenario, our estimator is the best $R$-separable approximation of $\widehat{\ope C}_N^K$
\begin{equation}\label{eq:discrete_measurement_estimator}
\widehat{\ope C}_{R,N}^K = \argmin_{\ope G} \verti{\widehat{\ope C}_N^K - \ope G}_2^2 \quad\text{s.t.}\quad \mathrm{DoS}(\ope G) \le R.    
\end{equation}
%\soham{To derive the rate of convergence of $\widehat{\ope C}_{R,N}^K$, we need some \emph{continuity} assumption relating the random surface $\ope X$ and  its pixelated version $\ope X^K$.}
The following theorem gives the rate of convergence of this estimator when the underlying covariance is {\em Lipschitz continuous}.

\begin{theorem}\label{thm:discrete_observation}
Let $\ope X_1,\ldots,\ope X_N \sim \ope X$ be a collection of random surfaces on $[0,1]^2$ with $\E(\|\ope X\|^4) < \infty$, where the covariance of $\ope X$ has separable expansion $\ope C = \sum_{i=1}^\infty \sigma_i \ope A_i \ct \ope B_i$, with $\sigma_1 > \cdots > \sigma_R > \sigma_{R+1} \ge \cdots \ge 0$. Further assume that the kernel $c(t,s,t^\prime,s^\prime)$ of $\ope C$ is $L$-Lipschitz continuous on $[0,1]^4$. Suppose that the surfaces are measured on a $K_1 \times K_2$ grid according to \eqref{eq:measurement_with_noise_grid}, where the measurement errors $E_n^K[i,j],n=1,\ldots,N$,$ i=1,\ldots,K_1$, \mbox{$j=1,\ldots,K_2$} are i.i.d., independent of $\mat X_1^K,\ldots,\mat X_N^K$, with $\E(E_n^K[i,j]) = 0$, $\var(E_n^K[i,j]) = \sigma^2$ and $\E(|E_n^K[i,j]|^4)<\infty$. Also suppose that one of the following holds.
\begin{enumerate}
\item $\ope X$ has almost surely continuous sample paths and $\mat{X}_1^K,\ldots,\mat{X}_N^K$ are obtained from $\ope X_1,\ldots,\ope X_N$ under the measurement scheme (S1).
\item $\mat{X}_1^K,\ldots,\mat{X}_N^K$ are obtained from $\ope X_1,\ldots,\ope X_N$ under the measurement scheme (S2). 
\end{enumerate}
If the estimator $\widehat{\ope C}_{R,N}^K$ is defined as in \eqref{eq:discrete_measurement_estimator}, then
\begin{align*}
&\verti{\Chat_{R,N}^K - \ope C}_{2} = \left(\sum_{i=R+1}^\infty \sigma_i^2\right)^{1/2} + {\cal O}_{\Prob}\bigg(\frac{a_R}{\sqrt{N}}\bigg) \\
&\kern17ex + \big(16 a_R + \sqrt{2}\big)L \left(\frac{1}{K_1^2} + \frac{1}{K_2^2}\right)^{1/2} + \frac{8\sqrt{2}L^2}{\verti{\ope C}_2}\bigg(\frac{1}{K_1^2} + \frac{1}{K_2^2}\bigg)a_R \\
&\kern17ex +\frac{(8\sqrt{2}a_R+1)\sigma^2}{\sqrt{K_1K_2}} + \frac{4\sqrt{2}\sigma^4a_R}{\verti{\ope C}_2K_1K_2} + \frac{16L\sigma^2a_R}{\verti{\ope C}_2\sqrt{K_1K_2}} \bigg(\frac{1}{K_1^2}+\frac{1}{K_2^2}\bigg)^{1/2},
\end{align*}
where the ${\cal O}_{\Prob}$ term is uniform in $K_1,K_2$ and $a_R = \verti{\ope C}_2\sum_{i=1}^R (\sigma_i/\alpha_i)$ is as in Theorem~\ref{thm:fully_observed}.
\end{theorem}
The theorem shows that in the case of discretely observed data, we get the same rate of convergence as in the fully observed case, plus additional terms reflecting the estimation error of $R$ terms at finite resolution from noise-contamintated data. In particular, if $K_1 \approx K_2 \approx K$, the additional terms are $\O(a_R\sigma^2/K)$. We assumed Lipschitz continuity to quantifiably control those error terms and derive rates of convergence, but the condition is not necessary if we merely seek consistency, which can be established assuming continuity alone.

\subsection{Unbalanced Design}

The method that we have described so far is suitable for surfaces observed on the same regular grid. However, it is possible to have irregular and uneven observations. Next, we describe a way to adapt our proposed method to this setup. Suppose that for the $n$-th random surface $\ope X_n = (\fun X_n(t,s): t \in T, s \in S)$, we make $K_n$ measurements at bivariate locations $(t_1,s_1),\ldots,(t_{K_n},s_{K_n})$. To fix ideas, we consider the \emph{point-wise evaluation with additive noise} model
\begin{equation}\label{eq:measurement_with_noise_irregular}
Y_{ni} = \fun X_n(t_{ni},s_{ni}) + E_{ni}, \qquad i=1,\ldots,K_n,n=1,\ldots,N,
\end{equation}
where $E_{ni}$'s are i.i.d.\ with mean zero and variance $\sigma^2$, independent of $\ope X_n$. To tackle this situation, we take the classical approach, where we first smooth the observations and then apply our method on the smoothed surfaces \citep[e.g.,][]{ramsay2002,ramsay2005}. To be precise, based on the measurements for the $n$-th surface, first we construct its \emph{smoothed} or functional version
\begin{equation}\label{eq:Xsmooth}
\widetilde{\ope X}_n = \mathcal S(Y_{n1},\ldots,Y_{nK_n}), \qquad n=1,\ldots,N,
\end{equation}
where $\mathcal S$ is a \emph{smoothing operator} that takes as input the discrete measurements and outputs a~function. Then, we apply our methodology to the surfaces $\widetilde{\ope X}_1,\ldots,\widetilde{\ope X}_N$, i.e., use the estimator
\[
\widetilde{\ope C}_{R,N} = \argmin_{\ope G} \verti{\widetilde{\ope C}_N-\ope G}_2^2 \quad\text{s.t.}\quad \mathrm{DoS}(\ope G) \le R,
\]
where $\widetilde{\ope C}_N$ is the empirical covariance based on $\widetilde{\ope X}_1,\ldots,\widetilde{\ope X}_N$. The rate of convergence of this estimator depends crucially on the smoother, as shown in the following theorem.

\begin{theorem}\label{thm:rate_of_convergence_smoother}
Let $\ope X_1,\ldots,\ope X_N \sim \ope X$ be a collection of i.i.d.\ random elements of \mbox{$\spa H = \spa H_1 \otimes \spa H_2$} with $\E(\|\ope X\|^4) < \infty$, and the covariance of $\ope X$ has separable expansion \mbox{$\ope C = \sum_{i=1}^\infty \sigma_i \ope A_i \ct \ope B_i$}, with $\sigma_1 > \cdots > \sigma_R > \sigma_{R+1} \ge \cdots \ge 0$. Suppose that the surfaces are observed discretely as \eqref{eq:measurement_with_noise_irregular} and let $\widetilde{\ope X}_n$ denote the smoothed surface from \eqref{eq:Xsmooth}. Let $\widehat{\ope C}_N$ and $\widetilde{\ope C}_N$ be the empirical covariance operators based on $\ope X_1,\ldots,\ope X_N$ and $\widetilde{\ope X}_1,\ldots,\widetilde{\ope X}_N$, respectively, and suppose that $\verti{\widehat{\ope C}_N - \widetilde{\ope C}_N}_2 = \O_{\Prob}(b_N)$. Then,
\[
\verti{\widetilde{\ope C}_{R,N} - \ope C}_2 \le \left(\,\sum_{i=R+1}^\infty \sigma_i^2\right)^{1/2} + \O_{\Prob}\left(a_R b_N\right),
\]
where $a_R = \verti{\ope C}_2\sum_{i=1}^R (\sigma_i/\alpha_i)$ is as in Theorem~\ref{thm:fully_observed}.
\end{theorem}

The theorem shows that the rate of convergence of the estimator in this case remains similar to the case of fully observed surfaces except $N^{-1/2}$ is replaced by $b_N$, the rate of convergence of $\vertj\widehat{\ope C}_N - \widetilde{\ope C}_N\vertj_2$. Note that $b_N$ depends on the interplay between (i) the denseness of the measurements (via $K_1,\ldots,K_N$), (ii) the degree of noise (via the noise variance $\sigma^2$), (iii) the smoother used, and (iv) the smoothness of the underlying surfaces $\ope X_1,\ldots,\ope X_N$, and in certain scenarios is equal to the optimal rate of $N^{-1/2}$ \citep[e.g.,][Theorem~3]{hall2006}.

In practice, one does not construct the whole functions $\widetilde{\ope X}_1,\ldots,\widetilde{\ope X}_N$, but rather evaluate them at a fixed number of points. It is possible to develop the asymptotic properties of our estimator in this setup similar to Theorem~\ref{thm:discrete_observation}. But we avoid doing that for the sake of brevity.

\section{Empirical Demonstration}

\subsection{Simulation Study}\label{sec:simulations}

We now explore the finite sample performance of the proposed estimation and prediction methodology by means of a simulation study. We consider two scenarios for the covariance $\mat C$. Firstly, we consider the parametric covariance of Example~\ref{ex:gneiting}. We choose the parameters as before to have the ground truth $\mat C \in \R^{K \times K \times K \times K}$ whose eigenvalues and separable expansion scores are plotted in Figure~\ref{fig:Irish_Wind}. Note that this covariance is \emph{not} $R$-separable for any finite $R$. In the second scenario, we construct an $R$-separable covariance with $R=3$, i.e.\ $\mat C = \sum_{r=1}^3 \sigma_r \mat A_r \ct \mat B_r$, where $\mat A_1, \mat A_2, \mat A_3,\mat B_1, \mat B_2, \mat B_3 \in \R^{K \times K}$ are all covariances on a univariate domain with the Fourier basis eigenfunctions and a power decay of the eigenvalues. Those univariate covariances are distinct, however, since the ordering of the basis is different in all cases. Still, having the same set of eigenfunctions for all the constituents makes the covariance weakly separable, hence we also consider the model of \cite{lynch2018} in the second scenario. The scores are chosen as $\sigma_1=8$, $\sigma_2=4$, $\sigma_3=2$. See Appendix~\ref{sec:setup_for_simulations} for a full specification of both the scenarios.

In both scenarios, we fix $K=50$ and generate $\mat X_1,\ldots,\mat X_N \in \R^{K \times K}$ as independent zero-mean Gaussians with covariance $\mat C$. Then we fit the respective estimator $\widehat{\mat C}$ using the data and calculate the relative Frobenius error defined as
$
\| \widehat{\mat C} - \mat C \|_F / \| \mat C \|_F .
$
This is done for different sample sizes $N$, and the reported results are averages over 25 independent Monte Carlo runs.

Figure~\ref{fig:gneiting} shows how the relative error decreases as a function of $N$ for several competing methods. First we focus on the $R$-separable estimator, for which the choices $R=1,2,3$ are considered. According to Theorem~\ref{thm:fully_observed}, the relative error converges as $N \to \infty$ to the relative bias, whose square is given by $\sum_{r=R+1}^\infty \sigma_r^2 / \sum_{r=1}^\infty \sigma_r^2$.
% \begin{equation}\label{eq:bias}
% \left(\sum_{r=R+1}^\infty \sigma_r^2 \right)^{1/2} \Bigg/ \left(\sum_{r=1}^\infty \sigma_r^2 \right)^{1/2} .
% \end{equation}
This is the minimal achievable error by means of an $R$-separable approximation, even if we knew $\mat C$, and it is depicted by a dashed horizontal line (an asymptote) in Figure~\ref{fig:gneiting} for every considered $R$. As expected, for $R=1$ the relative error converges fast to its asymptote (i.e.\ the \emph{variance} converges to zero), which is higher than the asymptotes for larger values of $R$ (higher values of $R$ reduce the bias). However, the speed of convergence to these lower asymptotes is substantially slower (the variance goes to zero more slowly). Hence choosing $R$ to be larger does not necessarily lead to a smaller error: there is a~genuine bias-variance trade-off. This can be observed both in the parametric setup (Figure~\ref{fig:gneiting}, left) and the non-parametric setup (Figure~\ref{fig:gneiting}, right). In the parametric setup, the empirical estimator $\widehat{\mat C}_N$ is the only unbiased estimator we consider, and it is substantially worse than any of the $R$-separable approximations. Even though it is the only estimator for which the error will eventually converge to 0 for $N \to \infty$, $N$ would have to be extremely large for the empirical estimator to beat the $R$-separable estimators with reasonably chosen degree-of-separability $R$.

The situation is similar in the non-parametric setup (Figure~\ref{fig:gneiting}, right) with two exceptions. First, the ground truth is $3$-separable in this case and hence the error of the estimator with $R=3$ converges to zero. Second, the decay of separable expansion scores is slower in the nonparametric case (one can see this from the asymptotes in Figure~\ref{fig:gneiting}), which means the differences between different values of $R$ manifest themselves for smaller $N$ already.

\begin{figure}[!t]
   \centering
   \begin{tabular}{cc}
   \includegraphics[width=0.47\textwidth]{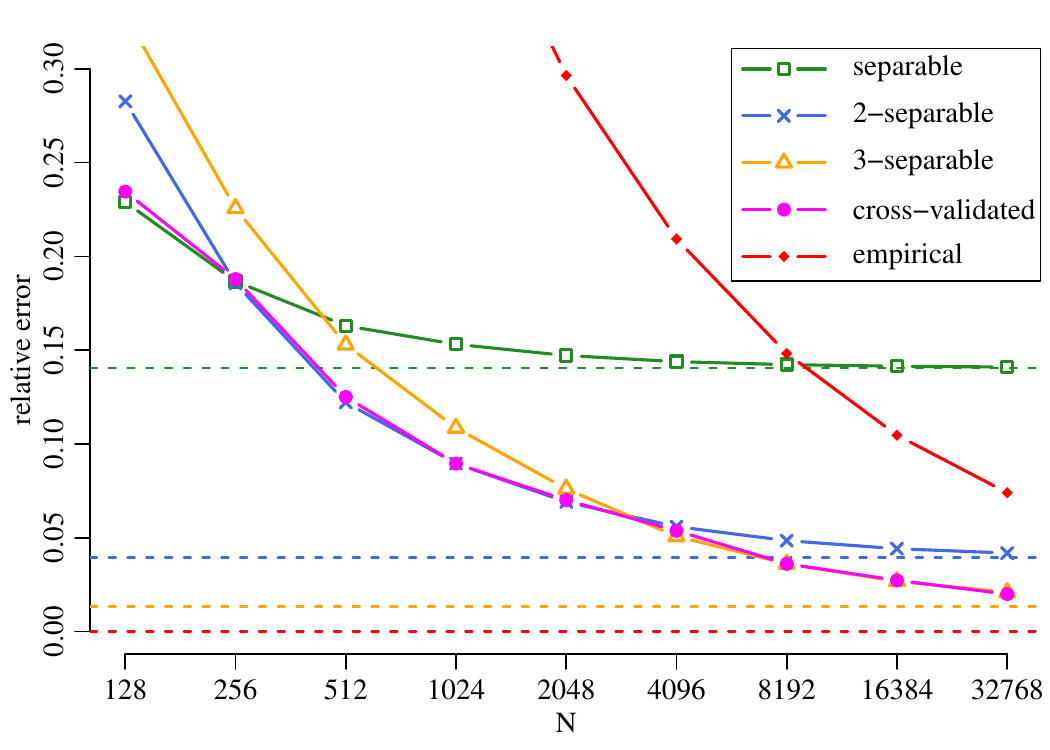} &
   \includegraphics[width=0.47\textwidth]{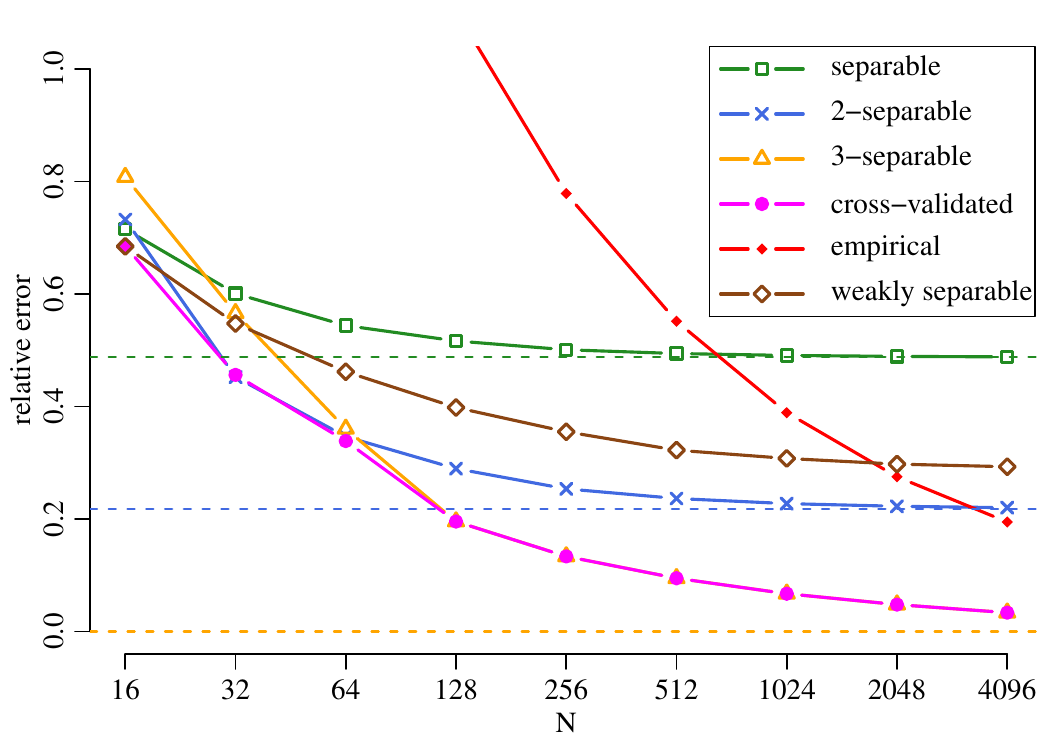}
   \end{tabular}  
   \caption{Relative estimation error depending on sample size $N$. Considered estimators are the separable estimator (R=1), $R$-separable estimators with $R=2$ and $R=3$, $R$-separable estimator with cross-validated $R$, the weakly separable estimator, and the empirical covariance estimator. Straight horizontal lines (asymptotes) show the bias of $R$-separable estimators for $R=1,2,3$.}
    \label{fig:gneiting} 
\end{figure}

In practice, we naturally do not know $\mat C$, %and hence we cannot calculate the errors as in Figure~\ref{fig:gneiting}. Therefore an automatic procedure to choose $R$ is essential. 
and hence cannot choose $R$ to optimally balance bias and variance as a visual inspection of Figure~\ref{fig:gneiting} might allow one to. Instead, we need to use the cross-validation strategy described in Section~\ref{sec:rank}. Figure~\ref{fig:gneiting} also shows the relative errors achieved with a cross-validated choice of $R$. Cross-validation appears to work rather well, with the cross-validated error curve forming almost a lower envelope of the curves for $R=1,2,3$, i.e.\ leading to an error that is always near optimal.

In the remainder of this section, we probe the performance of our prediction algorithm. We focus on the non-parametric setup (the results for the parametric setup are deferred to the appendix). The training sample is generated as above together with an additional test sample. The covariance is fitted on the training sample only, and then the final row and column of every observation in the test sample are predicted using the fitted covariance together with the remainder of the given observation (i.e.\ we perform one-step ahead prediction both in space and in time, at once). We use a small amount of ridge regularization ($\epsilon = 10^{-3}$) for all the competing methods.

Figure~\ref{fig:prediction} (left) shows relative prediction errors achieved by different estimators of the covariance. As before, higher values of $R$ lead to better performance as $N$ increases, and the empirical estimator does not perform well. Moreover, prediction with the empirical estimator is computationally more costly, c.f.\ the costs of inversion in Table~\ref{tab:complexities}. To demonstrate this in practice, Figure~\ref{fig:prediction} (right) shows the runtime of a single prediction task, run on a standard laptop. The memory complexity of constructing the empirical covariance restricts us to modest grid sizes  (up to $K=140$, we ran out of memory at $K=170$). Moreover, the runtimes explode for the empirical covariance. At the edge of feasibility ($K=140$), our inversion algorithm with $R=3$ runs about 1200 times faster compared to the empirical covariance, while it runs only 29 times slower compared to the separable model. It is shown empirically in Appendix~\ref{sec:further_simulations} that the number of iterations of our inversion algorithm does not depend on the grid size and hence the theoretical complexity of prediction for the $R$-separable model is the same as for the separable model. Nonetheless, the cost in practice is higher due to the iterative nature and some overhead calculations. The numerical behavior of the inversion algorithm is  discussed further in Appendix~\ref{sec:further_simulations}.

Finally, the non-parametric setup is by construction weakly separable, and hence we also compare our methodology with the weakly separable model of \cite{lynch2018}. There we use the 95\% cut-off for the variance explained in space and time, as suggested by the authors. As displayed in Figures~\ref{fig:gneiting} (right) and~\ref{fig:prediction} (left), the weakly separable model does better than the separable model and the empirical estimator, but is outperformed eventually by the proposed $R$-separable model.

\begin{figure}[!t]
   \centering
   \begin{tabular}{cc}
   \includegraphics[width=0.47\textwidth]{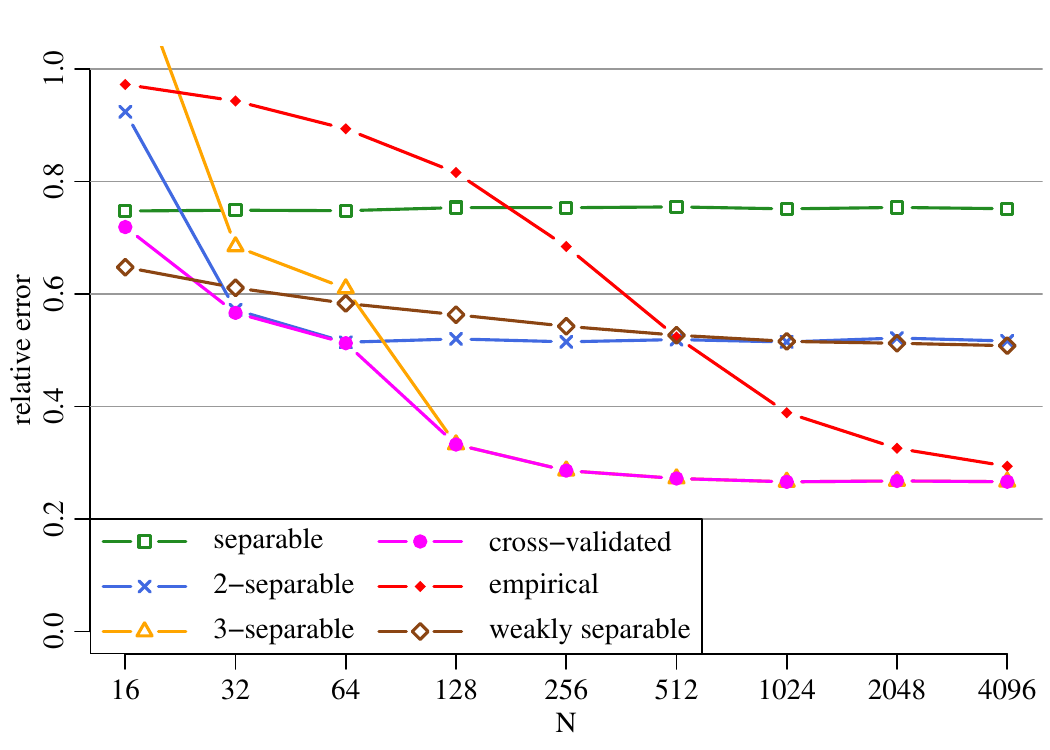} &
   \includegraphics[width=0.47\textwidth]{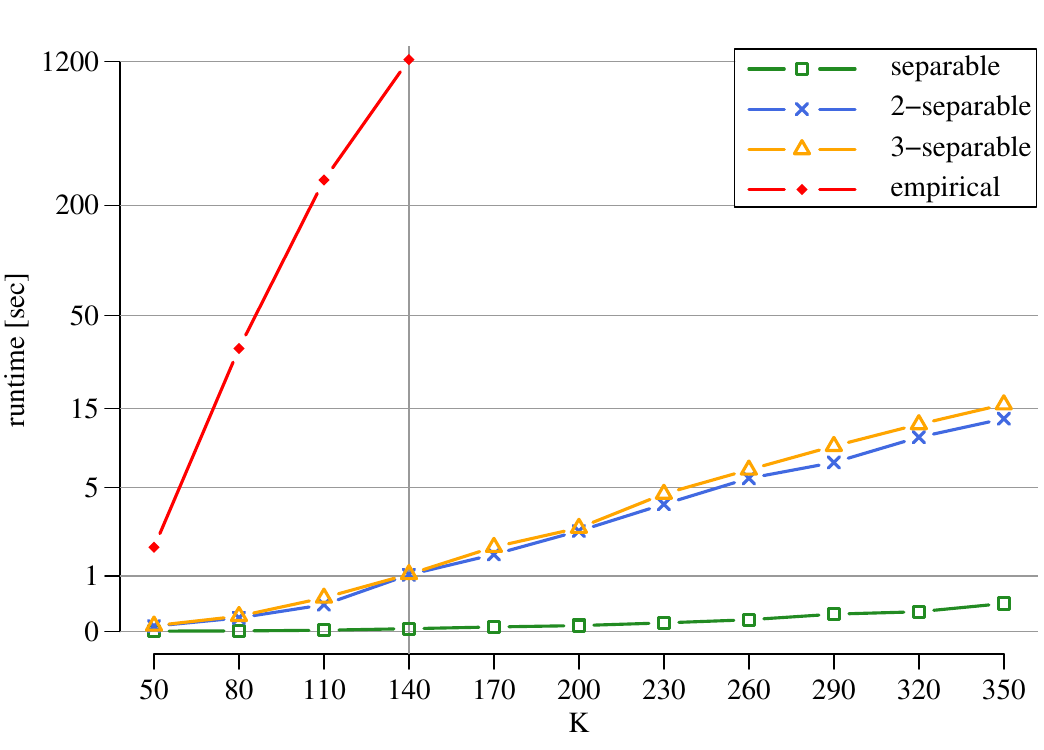}
   \end{tabular}  
   \caption{ Left: Relative prediction error depending on sample size $N$. Considered estimators are the separable estimator (R=1), $R$-separable estimators with $R=2$ and $R=3$, $R$-separable estimator with cross-validated $R$, the weakly separable estimator, and the empirical covariance estimator. Right: Log-runtimes of calculating a single prediction with a separable covariance (R=1), $R$-separable covariance with $R=2$ and $R=3$, and an unstructured covariance.}
    \label{fig:prediction} 
\end{figure}

\subsection{Classification of EEG signals}\label{sec:eeg}

Electroencephalography (EEG) is dominantly utilized for screening and diagnosis of various mental disorders, such as epilepsy or autism. It is an effective monitoring procedure for studying various brain activities. Since the responsive capacity of the brain is severely affected by alcoholism, EEG can also be used for detection and diagnosis of alcoholism. EEG signals are acquired using numerous electrodes placed on the scalp, and each electrode produces a time series of measurements, sampled over a specific interval. It is natural to consider all measurements per a single patient as a random surface, with time being one dimension and space (electrode placement) being the second dimension. We consider detection of alcoholism as a classification problem based on random surfaces, which are event-related EEG signals.
We work with a data set of 77 alcoholic and 45 control subjects, which is freely available from the University of California, Irvine machine learning database\footnote{https://archive.ics.uci.edu/ml/datasets/eeg+database, downloaded on 14 May 2021.}. Each subject was repeatedly exposed to either a~single stimulus (Condition~1) or two matching stimuli (Condition~2) or two non-matching stimuli (Condition~3). There was a total of 120 of these 1-second-long trials sampled at 256 Hz, but many were discarded right after the acquisition due to artifacts such as blinking. We discarded one of the control-group subjects, since this was a clear outlier with only 19 successful trials accross all three conditions. After that, we took averages of 10 (the maximum possible number for the remaining subjects) random available trials per condition, resulting in a data set $\mat X_1, \ldots, \mat X_{121} \in \R^{256 \times 64}$ of $N=121$ subjects, $K_1=256$ time points and $K_2=64$ spatial locations for each of the three conditions. Also, the class membership variables $Y_1,\ldots,Y_{121} \in \{ 0,1 \}$ (control or alcoholic) for all the subjects are available. 

Classification of subjects into their respective classes (control vs.\ alcoholic) using the EEG data set described above has been conducted many times before -- a literature search reveals dozens of papers published this year only. These attempts differ in many aspects, e.g.\ in how the data are sub-sampled, pre-processed or filtered, or which features are selected and which type of classifier is used. For example, \citet{prabhakar2020} compare over 50 different classification approaches with their accuracy varying between 80\% and 99\%. However, our goal is \emph{not} to build an additional classifier to compete with the state-of-the-art in EEG classification. It is merely to demonstrate how covariance estimation beyond separability can improve classification accuracy. For this purpose, we consider the functional linear discriminant analysis (fLDA) classifier \citep{oxford2011}. To the best of our knowledge, fLDA classifier has not been used before with the EEG data set.

Specifically, we utilize the centroid classifier of \citet{delaigle2012}. Assuming that the control group has a Gaussian distribution with mean $\mu_0$ and covariance $C$ while the alcoholic group has a Gaussian distribution with mean $\mu_1$ and covariance $C$, the optimal classifier (i.e.~a~predictor of the class membership $Y$) based on a one-dimensional projection is given by
\begin{equation}\label{eq:classifier}
\widehat Y := \1_{\{\langle X -\mu_0, \psi \rangle > \langle X -\mu_1, \psi \rangle\}},
\end{equation}
where $\psi$ is a solution to the linear problem involving the covariance, namely
\begin{equation}\label{eq:classification_inverse_problem}
    C \psi = \mu_1 - \mu_0,
\end{equation}
provided this solution exists. If the solution does not exist, neither does the optimal centroid classifier. Regardless,  $\mu_0$, $\mu_1$ and $C$ are unknown in practice and have to be estimated from the data. The estimator of $C$ is typically regularized by adding a ridge \citep{oxford2011} so the inverse problem can be solved and an estimator of $\psi$ is obtained. Then, one obtains the classifier by plugging in $\widehat \psi$ as well as $\widehat \mu_0$ and $\widehat \mu_1$ into \eqref{eq:classifier}.

While $\mu_0$ and $\mu_1$ can be easily estimated as the empirical means of the respective classes, estimation of the covariance poses an issue here. Even though the empirical covariance \mbox{$\widehat{\mat C}_N \in \R^{256 \times 64 \times 256 \times 64}$} can be evaluated in principle, we cannot expect it to be a good estimator, given the evidence in the simulation study. Moreover, and more importantly, the inverse problem \eqref{eq:classification_inverse_problem} is not computationally feasible with $C$ estimated empirically. 

Instead, one may use the separable estimator or the proposed $R$-separable estimator for $C$. In this particular case, the cross-validation strategy of Section~\ref{sec:rank} suggests the choice $R=2$ for all three conditions. Then, the inverse algorithm of Section~\ref{sec:inversion} can be used to solve \eqref{eq:classification_inverse_problem} efficiently, and the classifier is obtained easily.

Table~\ref{tab:accuracies} demonstrates the gains acquired by estimation beyond separability. To compare the resulting two classifiers (one for the separable estimator and other for the $2$-separable estimator of the covariance), we split the data set for every condition into folds $\mathcal F_k$, $k=1,\ldots,24$ of size $5$. Let $Y_{k,j}$, $k=1,\ldots,24$, $j=1,\ldots,5$, denote the class membership of the $j$-th observation in the $k$-th fold, and $\widehat Y_{k,j}^{(-k)}(R,\epsilon)$ denote the predicted class membership of $Y_{k,j}$ obtained by the fLDA classifier trained solely on folds $\mathcal{F}_{k'}$, $k'\neq k$, using the $R$-separable estimator of the covariance and $\epsilon I$ as the ridge regularizer. Out-of-sample cross-validated classification accuracy for a given classifier is then calculated as
\[
\mathrm{ACC}(R,\epsilon) = \sum_{k=1}^{24} \sum_{j=1}^5 \left| \widehat Y_{k,j}^{(-k)}(R,\epsilon) - Y_{k,j} \right| \Big/ 120.
\]
The maximum accuracies over a grid of ridge constants $\epsilon$ are reported in Table~\ref{tab:accuracies}. For every condition, the proposed $R$-separable estimator with $R=2$ (which is the degree-of-separability suggested by cross-validation) clearly outperforms the separable alternative of $R=1$.

\begin{table}[t]
    \centering
    \caption{Out-of-sample cross-validated classification accuracy (i.e.\ the ratio between the correctly classified and all subjects) for the fLDA classifier with the separable ($R=1$) or $R$-separable ($R=2$) estimator of the covariance, and for three separate data sets given by different conditions.}
    \begin{tabular}{cccc}
    \toprule
    Degree-of-separability & Condition 1 & Condition 2 & Condition 3 \\
    \midrule
    R=1  & 78 \% & 78 \% & 77 \% \\
    R=2  & 90 \% & 84 \% & 91 \% \\
    \bottomrule
    \end{tabular}
    \label{tab:accuracies}
\end{table}

\newpage

%%%%%%%%%%%%%%%%%%%%%%%%%%%%%%%%%%%%%%%%%%%%%%%%%%%%%%%%%%%%%%%%%%%%%%%%
%%%%%%%%%%%%%%%%%%%%%%%%%%%%%%%%%%%%%%%%%%%%%%%%%%%%%%%%%%%%%%%%%%%%%%%%
%%%%%%%%%%%%%%%%%%%%%%%%%%%%%%%%%%%%%%%%%%%%%%%%%%%%%%%%%%%%%%%%%%%%%%%%

\section{Appendices}
%\addtocontents{toc}{\protect\setcounter{tocdepth}{1}}
\renewcommand{\thesubsection}{\Alph{subsection}}
\numberwithin{lemma}{subsection}
\numberwithin{definition}{subsection}
\numberwithin{proposition}{subsection}
\numberwithin{equation}{subsection}
\numberwithin{corollary}{subsection}
\numberwithin{remark}{subsection}
\numberwithin{theorem}{subsection}
\numberwithin{example}{subsection}
\numberwithin{figure}{subsection}

These appendices contain proofs of the formal statements in the main text, additional simulation results, and some details which were left out of the main text. References to the paper are made in the standard way, while equations, lemmas, etc., that are introduced in the appendices are labelled and referred to with a section letter (from `A' to `E'). In Section~\ref{append:proofs_Tomas}, we give proofs of the results in Section~\ref{sec:methodology}. Perturbation bounds related to the separable expansion are derived in Section~\ref{append:perturbation}. In Section~\ref{append:proofs}, we give proofs of the rate of convergence results in Section~\ref{sec:analysis}, and derive some results on the rates under special decay structures on the separable component scores. Finally, details about the simulations in Section~\ref{sec:simulations} are given in Section~\ref{sec:setup_for_simulations}, while additional empirical results are provided in Section~\ref{sec:further_simulations}.

\subsection{Proofs of the Results in Section~\ref{sec:methodology}}\label{append:proofs_Tomas}

In this section, we give proofs of the results in Section~\ref{sec:methodology} of the main text. We start with the uniqueness of the partial inner products.
\begin{lemma}\label{lemma:PIP_unique}
Let $\spa H=\spa H_1 \otimes \spa H_2$. Then, there exist two unique bi-linear operators $T_1: \spa H \times \spa H_1 \to \spa H_2$ and $T_2: \spa H \times \spa H_2 \to \spa H_1$ defined by
\[
\begin{split}
T_1(x \otimes y, e) &= (x \otimes_1 y) e , \quad x,e \in \spa H_1, \, y \in \spa H_2,\\
T_2(x \otimes y, f) &= (x \otimes_2 y) f , \quad x \in \spa H_1, \, y,f \in \spa H_2.
\end{split}
\]
\end{lemma}

\begin{proof}
The set $\spa H_0 = \big\{\sum_{j=1}^m x_j \otimes y_j: m \in \N, x_j \in \spa H_1, y_j \in \spa H_2\big\}$ is dense in $\spa H$. Consider an element $\ope G = \sum_{j=1}^m x_j \otimes y_j \in \spa H_0$. Then, for $e \in \spa H_1$ and $f \in \spa H_2$, we have
\[
\begin{split}
\langle e, T_2(\ope G,f) \rangle_{\spa H_1} &= \big\langle e, \sum_{j=1}^m \langle y_j,f \rangle_{\spa H_2} x_j \big\rangle_{\spa H_1} = \sum_{j=1}^m \langle x_j, e \rangle_{\spa H_1} \langle y_j,f \rangle_{\spa H_2} \\
&=
 \sum_{j=1}^m \langle x_j \otimes y_j, e \otimes f \rangle_{\spa H} = \langle \ope G, e\otimes f \rangle_{\spa H}.
 \end{split}
\]
Choosing $e = T_2(\ope G,f)$ and using the Cauchy-Schwarz inequality, we obtain
\begin{equation*}
\|T_2(\ope G,f) \|_{\spa H_1}^2 = \langle T_2(\ope G,f) , T_2(\ope G,f) \rangle_{\spa H_1} = \langle \ope G , T_2(\ope G,f) \otimes f \rangle_{\spa H} \leq \| \ope G \|_{\spa H} \| T_2(\ope G,f) \|_{\spa H_1} \| f \|_{\spa H_2} .    
\end{equation*}
Hence $\|T_2(\ope G,f) \|_{\spa H_1} \leq \| \ope G \|_{\spa H} \| f \|_{\spa H_2}$. Therefore, $T_2(\cdot, f)$ can be continuously extended from the set $\spa H_0$ to the whole space $\spa H$, and the uniqueness holds true.
\end{proof}

\begin{proof}[Proof of Proposition~\ref{prop:integral_rep_PIP}]
Since $g$ is an $\spa L_2$ kernel, there is a Hilbert-Schmidt operator $\ope G:\spa H_2 \to \spa H_1$ with SVD $G = \sum \sigma_j e_j \otimes_2 f_j$ with kernel given by $g(t,s) = \sum \sigma_j e_j(t) f_j(s)$ (note that the sum only converges in the $\spa L_2$ sense, not uniformly). By isometry, from the SVD we have $g = \sum \sigma_j e_j \otimes f_j$, so
\[
u = T_2(g,v) = \sum_{j=1}^\infty \sigma_j T_2(e_j \otimes f_j,v) = \sum_{j=1}^\infty \sigma_j \langle f_j,v \rangle e_j = \sum_{j=1}^\infty \sigma_j e_j \int_{E_2} f_j(s)v(s) d \mu_2(s) 
\]
and hence by Fubini's theorem:
\[
u(t) = \sum_{j=1}^\infty \sigma_j e_j(t) \int_{E_2} f_j(s)v(s) d \mu_2(s)   = \int_{E_2} \left[ \sum_{j=1}^\infty \sigma_j e_j(t) f_j(s) \right] v(s) d \mu_2(s) 
\]
from which the claim follows.
\end{proof}

\begin{proof}[Proof of Lemma~\ref{lem:PIP_basic_properties}]
The first claim follows directly from the definition of the partial inner product.

In the second part, we want to show that if both $\ope C$ and the respective weighting $\ope W_1$ or $\ope W_2$ have the property of positive semi-definiteness, then the partial inner product will retain that property. Consider the eigendecompositions $\ope C = \sum \lambda_j g_j \otimes g_j$ and $\ope W_2 = \sum \alpha_j h_j \otimes h_j$. Then we have
\[
T_2(\ope C,\ope W_2) = \sum_{j=1}^\infty \sum_{i=1}^{\infty} \lambda_j \alpha_i T_2(g_j \otimes g_j, h_i \otimes h_i) = \sum_{j=1}^\infty \sum_{i=1}^{\infty} \lambda_j \alpha_i T_2(g_j, h_i) \otimes T_2(g_j, h_i),
\]
where the last equality can be verified on rank-1 elements $g_j$. Thus $T_2(\ope C,\ope W_2)$ is a weighted sum of quadratic forms with weights given by the non-negative eigenvalues of $\ope C$ and $\ope W_2$. As such, $T_2(\ope C,\ope W_2)$ must be positive semi-definite.
\end{proof}

\begin{proof}[Proof of Proposition~\ref{prop:convergence}]
Let $\ope C_1 := \sum_{j=1}^\infty \sigma_j^2 \ope A_j \otimes \ope A_j$ and $\ope C_2 := \sum_{j=1}^\infty \sigma_j^2 \ope B_j \otimes \ope B_j$. We will abuse the notation slightly and denote for any $k \in \N$
\[
\ope C_1^k := \sum_{j=1}^\infty \sigma_j^{2k} \ope A_j \otimes \ope A_j \quad \& \quad \ope C_2^k := \sum_{j=1}^\infty \sigma_j^{2k} \ope B_j \otimes \ope B_j .
\]
Note that if $\ope C$ was an operator, it would hold that $\ope C_1 = \ope C {\ope C}^\ast$ and $\ope C_2 = {\ope C}^\ast \ope C$, while $\ope C_1^k$ and $\ope C_2^k$ would denote the powers as usual. However, we aim for a more general statement, forcing us to view $\ope C$ as an element of a product space rather than an operator, so the ``powers'' serve just as a notational convenience in this proof. Also, the proportionality sign is used here to avoid the necessity of writing down the scaling constants for unit norm elements.

From the recurrence relation, and the definition of the partial inner product, we have
\[
\begin{split}
\ope V^{(1)} \propto T_1\big(\ope C,T_2(\ope C,\ope V^{(0)})\big) &= T_1\left(\sum_{j=1}^\infty \sigma_j \ope A_j \otimes \ope B_j, \sum_{i=1}^\infty \sigma_i \langle \ope B_i, \ope V^{(0)} \rangle \ope A_i\right) \\
&= \sum_{i=1}^\infty \sum_{j=1}^\infty \sigma_i \sigma_j  \langle \ope B_i, \ope V^{(0)} \rangle T_1(\ope A_j \otimes \ope B_j, \ope A_i).
\end{split}
\]
Since $T_1(\ope A_j \otimes \ope B_j, \ope A_i) = \langle \ope A_j,\ope A_i \rangle \ope B_j = \1_{[i=j]} \ope B_j$, we have
\[
\ope V^{(1)} \propto \sum_{j=1}^\infty \sigma_j^2 \langle \ope B_i, \ope V^{(0)} \rangle \ope B_j = T_2(\ope C_2, \ope V^{(0)}).
\]
By the same token we have $\ope V^{(2)} \propto T_2(\ope C_2, \ope V^{(1)})$, from which we obtain similarly
\[
\begin{split}
\ope V^{(2)} &\propto T_2\left(\sum_{j=1}^\infty \sigma_j^2 \ope B_j \otimes \ope B_j, \sum_{i=1}^\infty \sigma_i^2 \langle \ope B_i, \ope V^{(0)} \rangle \ope B_i\right) = \sum_{i=1}^\infty \sum_{j=1}^\infty \sigma_i^2 \sigma_j^2  \langle \ope B_i, \ope V^{(0)} \rangle T_2(\ope B_j \otimes \ope B_j, \ope B_i)\\
&=\sum_{i=1}^\infty \sum_{j=1}^\infty \sigma_i^2 \sigma_j^2  \langle \ope B_i, \ope V^{(0)} \rangle \langle \ope B_j,\ope B_i \rangle \ope B_j = \sum_{j=1}^\infty \sigma_j^4 \langle \ope B_j, \ope V^{(0)} \rangle \ope B_j = T_2\left(\sum_{j=1}^\infty \sigma_j^4 \ope B_j \otimes \ope B_j, \ope V^{(0)}\right) \\
&= T_2(\ope C_2^2,\ope V^{(0)}).
\end{split}
\]
By induction, we have $\ope V^{(k)} \propto T_2(\ope C_2^k,\ope V^{(0)})$ for $k=1,2,\ldots$.

Now we express the starting point $\ope V^{(0)}$ in terms of the orthonormal basis $\{ \ope B_j \}$. Let $\ope V^{(0)} = \sum_{j=1}^\infty \beta_j \ope B_j$, where we have $\beta_j = \langle \ope B_j,\ope V^{(0)} \rangle$. Then we can express
\[
\ope V^{(k)} \propto T_2(\ope C_2^2,\ope V^{(k)}) = T_2\left(\sum_{j=1}^\infty \sigma_j^{2k} \ope B_j \otimes \ope B_j, \sum_{i=1}^\infty \beta_i \ope B_i\right) = \sum_{i=1}^\infty \sum_{j=1}^\infty \sigma_j^{2k} \beta_i \langle \ope B_i, \ope B_j \rangle \ope B_j = \sum_{j=1}^\infty \sigma_j^{2k} \beta_j \ope B_j.
\]

Hence we have an explicit formula for the $k$-th step:
\[
\ope V^{(k)} = \frac{\ope B_1 + \ope R^{(k)}}{\|\ope B_1 + \ope R^{(k)} \|},
\]
where $\ope R^{(k)} = \sum_{j=2}^\infty \left( \frac{\sigma_j}{\sigma_1} \right)^{2k} \frac{\beta_j}{\beta_1} \ope B_j$.

It remains to show that $\| \ope R^{(k)} \| \to 0$ for $k \to \infty$. Due to the decreasing ordering of the scores $\{ \sigma_j \}$, we have
\[
\begin{split}
\| \ope R^{(k)} \|^2 &= \sum_{j=2}^\infty \left( \frac{\sigma_j}{\sigma_1} \right)^{4k} \frac{\beta_j^2}{\beta_1^2} \leq \left( \frac{\sigma_2}{\sigma_1} \right)^{4k-2} \sum_{j=2}^\infty \left( \frac{\sigma_j}{\sigma_1} \right)^{2} \frac{\beta_j^2}{\beta_1^2} \\
&=  \left( \frac{\sigma_2}{\sigma_1} \right)^{4k-2} \frac{1}{\sigma_1^2 \beta_1^2} \sum_{j=1}^\infty \sigma_j^2 \beta_j \leq \left( \frac{\sigma_2}{\sigma_1} \right)^{4k-2} \frac{1}{\sigma_1^2 \beta_1^2} \| \ope C \|,
\end{split}
\]
where in the last inequality we used that $|\beta_j|\leq 1$. Since $\sigma_1 > \sigma_j$ for $j \geq 2$, we see that the remainder $\ope R^{(k)}$ goes to zero.

The proof of the statement concerning the sequence $\{ \ope U^{(k)} \}$ follows the same steps.
\end{proof}

\subsubsection{More General Definition of the Partial Inner Product}\label{appendix:PIP_general}

In the main text, we have defined the partial inner products $T_1$ and $T_2$ for the tensor product of two Hilbert spaces $\spa H_1$ and $\spa H_2$. We can define the partial inner product more generally for the tensor product of more than two Hilbert spaces as follows. For $d$ separable Hilbert spaces $\spa H_1,\ldots,\spa H_d$, let
\[
\spa H := \spa H_1 \otimes \ldots \otimes \spa H_d = {\displaystyle \bigotimes_{j=1}^d} \spa H_j
\]
be their tensor product Hilbert space, defined as the completion of
\[
\Big\{\sum_{j=1}^m x_{1j} \otimes \cdots \otimes x_{dj} : m \in \N, x_{ij} \in \spa H_i, i=1,\ldots,d, j=1,\ldots,m\Big\},
\]
under the inner product $\langle x_1 \otimes \cdots \otimes x_d, y_1 \otimes \cdots \otimes y_d \rangle = \langle x_1,y_1 \rangle_{\spa H_1} \times \cdots \times \langle x_d,y_d \rangle_{\spa H_d}$. For $J \subset \{ 1, \ldots,d \}$, let
\[
\spa H_J := {\displaystyle \bigotimes_{j \in J}} \spa H_j \quad \text{ and } \quad \spa H_{-J} := {\displaystyle \bigotimes_{j \notin J}} \spa H_j.
\]
We can define the partial inner product $T_J : \spa H \times \spa H_{J} \to \spa H_{-J} $ to be the unique bi-linear operator such that
\[
T_{J}(\ope X \otimes \ope Y, \ope A) = \langle \ope X,\ope A\rangle_{\spa H_{J}} \ope Y, \quad \forall\, \ope A,\ope X \in \spa H_J, \ope Y \in \spa H_{-J}.
\]
Note that $\spa H_1 \otimes \spa H_2$ is isomorphic to $\spa H_2 \otimes \spa H_1$ with the isomorphism given by $\Phi(x \otimes y) = y \otimes x$, $\forall x \in \spa H_1, y \in \spa H_2$, and the same holds for products of multiple spaces. Hence we can always permute dimensions as we wish. Thus we can w.l.o.g.\ assume that $J=\{1,\ldots,d' \}$ for some $d' < d$. Proving uniqueness is now the same as it was in the case of Lemma~\ref{lemma:PIP_unique}, and Proposition~\ref{prop:integral_rep_PIP} (of the main text) can be generalized to more than two dimensions as well.

\subsection{Perturbation Bounds}\label{append:perturbation}

From the main text, $\ope C \in {\spa S}_2(\spa H_1 \otimes \spa H_2)$ has the separable expansion
\[
\ope C = \sum_{i=1}^{\infty} \sigma_i \ope A_i \ct \ope B_i,
\]
where $\{\ope A_i\}_{i\ge 1}$ (resp., $\{\ope B_i\}_{i \ge 1}$) is an orthonormal basis of ${\spa S}_2(\spa H_1)$ (resp., ${\spa S}_2(\spa H_2)$), and $|\sigma_1| \ge |\sigma_2| \ge \cdots \ge 0$. We use the notation $\mathscr C$ to indicate the element of $\spa S_2(\spa H_2 \otimes \spa H_2, \spa H_1 \otimes \spa H_1)$ that is isomorphic to $\ope C$ (see Section~\ref{sec:SCD}). The following lemma gives perturbation bounds for the components of the separable expansion.
\begin{lemma}[Perturbation Bounds for Separable Expansion]\label{lemma:perturbation_bound}
Let $\ope C$ and $\widetilde{\ope C}$ be two operators in ${\spa S}_2(\spa H_1 \otimes \spa H_2)$ with separable expansions $\ope C = \sum_{i=1}^\infty \sigma_i \ope A_i \ct \ope B_i$ and $\widetilde{\ope C} = \sum_{i=1}^\infty \widetilde\sigma_i \widetilde{\ope A}_i \ct \widetilde{\ope B}_i$, respectively. Also suppose that $\sigma_1 > \sigma_2 > \cdots \ge 0$, and $\langle \ope A_i,\widetilde{\ope A}_i\rangle_{{\spa S}_2(\spa H_1)},\langle \ope B_i,\widetilde{\ope B}_i \rangle_{{\spa S}_2(\spa H_2)} \ge 0$ for every $i=1,2,\ldots$ (adjust the sign of $\widetilde\sigma_i$ as required). Then,
\begin{itemize}
\item[(a)] $\sup_{i \ge 1} \big|\sigma_i - \widetilde\sigma_i\big| \le \verti{\ope C - \widetilde{\ope C}}_{2}$.
\item[(b)] For every $i \ge 1$,
\begin{align*}
\verti{\ope A_i - \widetilde{\ope A}_i}_{{\spa S}_2(\spa H_1)} &\le \frac{2\sqrt{2}}{\alpha_i} \verti{\ope C - \widetilde{\ope C}}_{2} \bigg(\verti{\ope C}_{2} + \verti{\widetilde{\ope C}}_{2} \bigg),\\
\verti{\ope B_i - \widetilde{\ope B_i}}_{{\spa S}_2(\spa H_2)} &\le \frac{2\sqrt{2}}{\alpha_i} \verti{\ope C - \widetilde{\ope C}}_{2} \bigg(\verti{\ope C}_{2} + \verti{\widetilde{\ope C}}_{2}\bigg),
\end{align*}
where $\alpha_i = \min\{\sigma_{i-1}^2-\sigma_i^2, \sigma_i^2 - \sigma_{i+1}^2\}$. Here, $\verti{\cdot}_{2}$ denotes the Hilbert-Schmidt norm.
\end{itemize}
\end{lemma}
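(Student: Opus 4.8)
The plan is to transport everything to the setting of a singular value decomposition and then invoke classical perturbation theory. The starting point is the chain of isometric isomorphisms in Section~\ref{sec:background}: viewing $C$ as the element $\mathcal C \in {\cal S}_2(\mathcal H_2 \otimes \mathcal H_2, \mathcal H_1 \otimes \mathcal H_1)$, the SCD \eqref{eq:SVD_of_C} is nothing but the SVD \eqref{eq:SVD_useless} of $\mathcal C$, so the $\sigma_i$ are the singular values of $\mathcal C$ and --- under ${\cal S}_2(\mathcal H_1) \simeq \mathcal H_1 \otimes \mathcal H_1$, ${\cal S}_2(\mathcal H_2) \simeq \mathcal H_2 \otimes \mathcal H_2$ --- the $A_i$ correspond to the left singular vectors $e_i$, which are eigenvectors of the self-adjoint positive trace-class operator $\mathcal C \t{\mathcal C}$ with eigenvalue $\sigma_i^2$, and the $B_i$ to the right singular vectors $f_i$, eigenvectors of $\t{\mathcal C}\mathcal C$ with the same eigenvalue. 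Since these isomorphisms are isometries, $\verti{C-\widetilde C}_2 = \verti{\mathcal C - \widetilde{\mathcal C}}_2$, $\verti{A_i - \widetilde A_i}_{{\cal S}_2(\mathcal H_1)} = \|e_i - \widetilde e_i\|$, and $\ip{A_i,\widetilde A_i}_{{\cal S}_2(\mathcal H_1)} = \ip{e_i,\widetilde e_i}$, so the statement reduces to the stability of the singular values and singular vectors of $\mathcal C$ under a perturbation measured in Hilbert--Schmidt norm.

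For part (a), I would apply the Courant--Fischer min-max characterisation of singular values of compact operators (equivalently Weyl's inequality, cf. \cite{hsing2015}), which gives $|\sigma_i - \widetilde\sigma_i| \le \verti{\mathcal C - \widetilde{\mathcal C}}_\infty \le \verti{\mathcal C - \widetilde{\mathcal C}}_2 = \verti{C - \widetilde C}_2$ uniformly in $i$; the sign convention in the hypothesis plays no role here, since the $\sigma_i$ and $\widetilde\sigma_i$ are genuine singular values.

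For part (b), the plan is to pass to $M := \mathcal C \t{\mathcal C}$ and $\widetilde M := \widetilde{\mathcal C}\t{\widetilde{\mathcal C}}$ on $\mathcal H_1 \otimes \mathcal H_1$ and use a $\sin\Theta$-type theorem. Under the strict ordering $\sigma_1 > \sigma_2 > \cdots$, the eigenvalue $\sigma_i^2$ of $M$ is simple and separated from the rest of the spectrum of $M$ by exactly $\alpha_i$ (for $i=1$ reading $\sigma_0 = \infty$, i.e. $\alpha_1 = \sigma_1^2 - \sigma_2^2$). A Davis--Kahan bound valid for compact self-adjoint operators on a separable Hilbert space then yields $\sin\Theta(e_i, \widetilde e_i) \le 2\verti{M - \widetilde M}_2/\alpha_i$. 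One controls the right-hand side by the algebraic identity $M - \widetilde M = \mathcal C\,\t{(\mathcal C - \widetilde{\mathcal C})} + (\mathcal C - \widetilde{\mathcal C})\,\t{\widetilde{\mathcal C}}$ together with $\verti{\cdot}_\infty \le \verti{\cdot}_2$, obtaining $\verti{M - \widetilde M}_2 \le \verti{C - \widetilde C}_2\big(\verti{C}_2 + \verti{\widetilde C}_2\big)$. Finally, the sign hypothesis $\ip{e_i,\widetilde e_i} \ge 0$ allows passing from the angle to the norm via $\|e_i - \widetilde e_i\|^2 = 2 - 2\ip{e_i,\widetilde e_i} \le 2\big(1 - \ip{e_i,\widetilde e_i}^2\big) = 2\sin^2\Theta(e_i,\widetilde e_i)$, so that $\verti{A_i - \widetilde A_i}_2 \le \sqrt 2\,\sin\Theta(e_i,\widetilde e_i) \le (2\sqrt 2/\alpha_i)\verti{C-\widetilde C}_2(\verti{C}_2 + \verti{\widetilde C}_2)$; the bound for $B_i$ follows by the same argument with $\t{\mathcal C}\mathcal C$ in place of $\mathcal C \t{\mathcal C}$.

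The main obstacle is the eigenvector step: one must check that $\sigma_i^2$ is genuinely an isolated, simple eigenvalue of $M$ with separation $\alpha_i$ --- this is precisely where the strict inequalities $\sigma_1 > \sigma_2 > \cdots$ are used --- and cite a form of the $\sin\Theta$ theorem that both holds in the infinite-dimensional separable setting and measures the spectral gap relative to the \emph{unperturbed} operator $M$; it is this latter requirement that produces the factor $2$ in the Davis--Kahan estimate and hence the constant $2\sqrt 2$ in the final bound. Everything else (Weyl's inequality, the product expansion of $M - \widetilde M$, and the elementary angle-to-norm conversion) is routine.
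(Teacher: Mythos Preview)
Your proposal is correct and mirrors the paper's proof essentially step for step: both reduce the SCD to the SVD of $\mathcal C$ via the isometries of Section~\ref{sec:background}, invoke the singular-value stability inequality for part~(a), and for part~(b) pass to the self-adjoint operators $\mathcal C\t{\mathcal C}$ and $\t{\mathcal C}\mathcal C$, bound their difference by the algebraic identity you wrote, and apply the eigenvector perturbation inequality with constant $2\sqrt 2$ (the paper cites Lemmas~4.2--4.3 of \cite{bosq2012} in place of your Weyl/Davis--Kahan formulation, but these are the same results).
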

\begin{proof}
Note that $\sigma_i$ (resp., $\widetilde\sigma_i$) is the $i$-th singular value of the operator $\mathscr C$ (resp., $\widetilde{\mathscr C}$). Following \citet[][Lemma 4.2]{bosq2012}, we get that $\sup_{i \ge 1} \big|\sigma_i - \widetilde\sigma_i\big| \le \verti{{\mathscr C} - \widetilde{\mathscr C}}_{\infty} \le \verti{{\mathscr C} - \widetilde{\mathscr C}}_{2}$. Part (a) now follows by noting that $\verti{{\mathscr C} - \widetilde{\mathscr C}}_{2} = \verti{\ope C - \widetilde{\ope C}}_{2}$ because of the isometry between ${\spa S}_2(\spa H_1 \otimes \spa H_2)$ and ${\spa S}_2(\spa H_2 \otimes \spa H_2, \spa H_1 \otimes \spa H_1)$.

For part (b), recall that $\ope A_i$ (resp., $\widetilde{\ope A}_i$) is isomorphic to $e_i$ (resp., $\widetilde e_i$), the $i$-th right singular element of $\mathscr C$ (resp., of $\widetilde{\mathscr C}$), see Section~\ref{sec:SCD}. Now, $e_i$ (resp., $\widetilde e_i$) is the $i$-th eigenfunction of ${\mathscr C}{\mathscr C}^\ast$ (resp., $\widetilde{\mathscr C}{\widetilde{\mathscr C}}^\ast$) with corresponding eigenvalue $\lambda_i = \sigma_i^2$ (resp., $\widetilde\lambda_i = \widetilde\sigma_i^2$). Here, ${\mathscr C}^\ast$ (resp., ${\widetilde{\mathscr C}}^\ast$) denotes the adjoint of $\mathscr C$ (resp., $\widetilde{\mathscr C}$). Also, $\langle e_i,\widetilde e_i\rangle_{\spa H_1 \otimes \spa H_1} = \langle \ope A_i,\widetilde{\ope A}_i \rangle_{{\spa S}_2(\spa H_1)} \ge 0$. Now, using a perturbation bound on the eigenfunctions of operators \citep[][Lemma 4.3]{bosq2012}, we get
\[
\|e_i - \widetilde e_i\|_{\spa H_1 \otimes \spa H_1} \le \frac{2\sqrt{2}}{\alpha_i} \verti{{\mathscr C}{\mathscr C}^\ast - \widetilde{\mathscr C} {\widetilde{\mathscr C}}^\ast}_{\infty} \le \frac{2\sqrt{2}}{\alpha_i} \verti{{\mathscr C}{\mathscr C}^\ast - \widetilde{\mathscr C} {\widetilde{\mathscr C}}^\ast}_{2},
\]
where $\alpha_i = \min\{\sigma_{i-1}^2 - \sigma_i^2, \sigma_i^2 - \sigma_{i+1}^2\}$. Now,
\begin{align*}
\verti{{\mathscr C}{\mathscr C}^\ast - \widetilde{\mathscr C}{\widetilde{\mathscr C}}^\ast}_{2} = \verti{{\mathscr C}\big({\mathscr C}-\widetilde{\mathscr C}\big)^\ast + \big({\mathscr C}-\widetilde{\mathscr C}\big){\widetilde{\mathscr C}}^\ast}_{2} &\le \verti{{\mathscr C}-\widetilde{\mathscr C}}_{2} \bigg(\verti{\mathscr C}_{2}+\verti{\widetilde{\mathscr C}}_{2}\bigg) \\
&= \verti{\ope C-\widetilde{\ope C}}_{2} \bigg(\verti{\ope C}_{2}+\verti{\widetilde{\ope C}}_{2}\bigg),
\end{align*}
where we have used: (i) the triangle inequality for the Hilbert-Schmidt norm, (ii) the fact that the Hilbert-Schmidt norm of an operator and its adjoint are the same, and (iii) the isometry between ${\spa S}_2(\spa H_1 \otimes \spa H_2)$ and ${\spa S}_2(\spa H_2 \otimes \spa H_2, \spa H_1 \otimes \spa H_1)$. By noting that 
\[
\verti{\ope A_i - \widetilde{\ope A}_i}_{{\spa S}_2(\spa H_1)} = \|e_i - \widetilde e_i\|_{\spa H_1 \otimes \spa H_1},
\]
the upper bound on $\verti{\ope A_i - \widetilde{\ope A}_i}_{{\spa S}_2(\spa H_1)}$ follows. The bound on $\verti{\ope B_i - \widetilde{\ope B}_i}_{{\spa S}_2(\spa H_2)}$ can be proved similarly.
\end{proof}

The following lemma gives us perturbation bound for the best $R$-separable approximation of Hilbert-Schmidt operators.

\begin{lemma}[Perturbation Bound for Best $R$-separable Approximation]\label{lemma:perturbation_R-sep}
Let $\ope C$ and $\widetilde{\ope C}$ be two Hilbert-Schmidt operators on $\spa H_1 \otimes \spa H_2$, with separable expansions $\ope C=\sum_{r=1}^\infty \sigma_r \ope A_r \ct \ope B_r$ and $\widetilde{\ope C}=\sum_{r=1}^\infty \tilde\sigma_r \widetilde{\ope A}_r \ct \widetilde{\ope B}_r$, respectively. Let $\ope C_R$ and $\widetilde{\ope C}_R$ be the best $R$-separable approximations of $\ope C$ and $\widetilde{\ope C}$, respectively. Then,
\[
\verti{\ope C_R - \widetilde{\ope C}_R}_{2} \le \bigg\{4\sqrt{2}\Big(\verti{\ope C}_{2} + \verti{\widetilde{\ope C}}_{2}\Big) \sum_{r=1}^R \frac{\sigma_r}{\alpha_r} + 1\bigg\} \verti{\ope C - \widetilde{\ope C}}_{2},
\]
where $\alpha_r = \min\{\sigma_{r-1}^2 - \sigma_r^2, \sigma_r^2 - \sigma_{r+1}^2\}$.
\end{lemma}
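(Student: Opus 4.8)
The plan is to expand $C_R - \widetilde C_R$ over its first $R$ separable components, split each summand into a \emph{score-mismatch} term and a \emph{component-mismatch} term, and bound the two resulting sums separately: the first collapses to a single copy of $\verti{C-\widetilde C}_2$ by orthonormality, and the second produces the $\sum_{r=1}^R \sigma_r/\alpha_r$ factor through the per-component perturbation bounds of Lemma~\ref{lemma:perturbation_bound}.

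First I would add and subtract $\sigma_r\,\widetilde A_r \ct \widetilde B_r$ inside each term of $C_R - \widetilde C_R = \sum_{r=1}^R(\sigma_r A_r \ct B_r - \tilde\sigma_r \widetilde A_r \ct \widetilde B_r)$ to obtain
\[
C_R - \widetilde C_R = \underbrace{\sum_{r=1}^R(\sigma_r - \tilde\sigma_r)\,\widetilde A_r \ct \widetilde B_r}_{=:S_1} + \underbrace{\sum_{r=1}^R \sigma_r\big(A_r \ct B_r - \widetilde A_r \ct \widetilde B_r\big)}_{=:S_2}.
\]
Since $\langle \widetilde A_i \ct \widetilde B_i, \widetilde A_j \ct \widetilde B_j\rangle_{\mathcal{S}_2(\mathcal H_1\otimes\mathcal H_2)} = \langle \widetilde A_i, \widetilde A_j\rangle\langle \widetilde B_i, \widetilde B_j\rangle = \delta_{ij}$, the family $\{\widetilde A_r \ct \widetilde B_r\}$ is orthonormal, so $\verti{S_1}_2^2 = \sum_{r=1}^R(\sigma_r-\tilde\sigma_r)^2 \le \sum_{i\ge1}(\sigma_i-\tilde\sigma_i)^2$. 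Because $\sigma_i$ and $\tilde\sigma_i$ are the singular values of $\mathcal C$ and $\widetilde{\mathcal C}$, the Hoffman--Wielandt/Mirsky inequality for singular values (equivalently, von Neumann's trace inequality applied to $\verti{\mathcal C-\widetilde{\mathcal C}}_2^2 = \sum_i\sigma_i^2 + \sum_i\tilde\sigma_i^2 - 2\langle\mathcal C,\widetilde{\mathcal C}\rangle$) yields $\sum_i(\sigma_i-\tilde\sigma_i)^2 \le \verti{\mathcal C-\widetilde{\mathcal C}}_2^2 = \verti{C-\widetilde C}_2^2$, using the isometry between $\mathcal{S}_2(\mathcal H_1\otimes\mathcal H_2)$ and $\mathcal{S}_2(\mathcal H_2\otimes\mathcal H_2,\mathcal H_1\otimes\mathcal H_1)$. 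Hence $\verti{S_1}_2 \le \verti{C-\widetilde C}_2$, which is the source of the ``$+1$''; note that the weaker estimate $\sup_i|\sigma_i-\tilde\sigma_i|\le\verti{C-\widetilde C}_2$ of Lemma~\ref{lemma:perturbation_bound}(a) would only give a factor $\sqrt R$ here.

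For $S_2$, the triangle inequality gives $\verti{S_2}_2 \le \sum_{r=1}^R \sigma_r\,\verti{A_r\ct B_r - \widetilde A_r \ct \widetilde B_r}_2$. Writing $A_r\ct B_r - \widetilde A_r\ct\widetilde B_r = (A_r-\widetilde A_r)\ct B_r + \widetilde A_r\ct(B_r-\widetilde B_r)$ and using $\verti{P\ct Q}_2 = \verti{P}_2\verti{Q}_2$ together with $\verti{B_r}_2 = \verti{\widetilde A_r}_2 = 1$ (elements of ONBs) gives $\verti{A_r\ct B_r - \widetilde A_r\ct\widetilde B_r}_2 \le \verti{A_r-\widetilde A_r}_{\mathcal{S}_2(\mathcal H_1)} + \verti{B_r-\widetilde B_r}_{\mathcal{S}_2(\mathcal H_2)}$, and Lemma~\ref{lemma:perturbation_bound}(b), applied to each of the two terms, bounds this by $\tfrac{4\sqrt2}{\alpha_r}\verti{C-\widetilde C}_2(\verti{C}_2+\verti{\widetilde C}_2)$. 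Therefore $\verti{S_2}_2 \le 4\sqrt2\,(\verti{C}_2+\verti{\widetilde C}_2)\,\verti{C-\widetilde C}_2\sum_{r=1}^R\sigma_r/\alpha_r$, and adding the two estimates reproduces the claimed bound exactly.

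I expect the fiddly part to be the sign bookkeeping rather than any of the estimates. Lemma~\ref{lemma:perturbation_bound}(b) requires $\langle A_r,\widetilde A_r\rangle\ge0$ and $\langle B_r,\widetilde B_r\rangle\ge0$, which can in general be forced simultaneously only by possibly flipping the sign of $\tilde\sigma_r$, whereas the bound on $S_1$ relies on $\sigma_i$ and $\tilde\sigma_i$ being the genuine non-negative singular values. These are reconciled by observing that the quantity actually bounded in $S_2$, namely $\verti{A_r\ct B_r - \widetilde A_r\ct\widetilde B_r}_2$, is invariant under the joint flips $(A_r,B_r)\mapsto(-A_r,-B_r)$ and $(\widetilde A_r,\widetilde B_r)\mapsto(-\widetilde A_r,-\widetilde B_r)$, hence does not depend on any sign convention; so one is free to pick, for each $r$ and solely for that estimate, representatives satisfying the hypotheses of Lemma~\ref{lemma:perturbation_bound}(b), while retaining the convention $\tilde\sigma_r\ge0$ when writing the SCD of $\widetilde C$ and bounding $S_1$. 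The only ingredient not already present in the paper is the classical $\ell^2$ perturbation bound for singular values.
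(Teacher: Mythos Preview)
Your proof is essentially the paper's: the same add-and-subtract split into $S_1$ (score mismatch) and $S_2$ (component mismatch), the same orthonormality-plus-Mirsky/von~Neumann bound for $S_1$ (the paper invokes von~Neumann's trace inequality for $\sum_i(\sigma_i-\tilde\sigma_i)^2\le\verti{C-\widetilde C}_2^2$), and the same telescoping $A_r\ct B_r-\widetilde A_r\ct\widetilde B_r$ followed by Lemma~\ref{lemma:perturbation_bound}(b) for $S_2$. On the sign bookkeeping, the paper simply declares ``w.l.o.g.\ $\langle A_i,\widetilde A_i\rangle,\langle B_i,\widetilde B_i\rangle\ge0$, adjusting the sign of $\widetilde\sigma_i$ as required'' and proceeds; your more careful remark is well taken, though note that joint-flip invariance alone does not dispose of the case where exactly one of the two inner products is negative---a residual subtlety the paper's treatment shares.
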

\begin{proof}
Note that $\ope C_R$ and $\widetilde{\ope C}_R$ have separable expansions $\ope C_R=\sum_{i=1}^R \sigma_i \ope A_i \ct \ope B_i$ and $\widetilde{\ope C}_R=\sum_{i=1}^R \widetilde\sigma_i \widetilde{\ope A}_i \ct \widetilde{\ope B}_i$, respectively. W.l.o.g. we assume that $\langle \ope A_i,\widetilde{\ope A}_i \rangle_{{\spa S}_2(\spa H_1)}, \langle \ope B_i,\widetilde{\ope B}_i \rangle_{{\spa S}_2(\spa H_2)} \ge 0$ for every $i=1,\ldots,R$ (if not, one can change the sign of $\widetilde{\ope A}_i$ or $\widetilde{\ope B}_i$, and adjust the sign of $\widetilde\sigma_i$ as required). Thus,
\begin{align}\label{eq:perturbation_R-sep_Eq1}
\verti{\ope C_R - \widetilde{\ope C}_R}_{2} &= \verti{\sum_{i=1}^R \sigma_i \ope A_i \ct \ope B_i - \sum_{i=1}^R \widetilde\sigma_i \widetilde{\ope A}_i \ct \widetilde{\ope B}_i}_{2} \nonumber \\
&= \verti{\sum_{i=1}^R \sigma_i \Big(\ope A_i \ct \ope B_i - \widetilde{\ope A}_i \ct \widetilde{\ope B}_i\Big) + \sum_{i=1}^R \big(\sigma_i - \widetilde\sigma_i) \widetilde{\ope A}_i \ct \widetilde{\ope B}_i}_{2} \nonumber \\
&\le \verti{\sum_{i=1}^R \sigma_i \Big(\ope A_i \ct \ope B_i - \widetilde{\ope A}_i \ct \widetilde{\ope B}_i\Big)}_{2} + \verti{\sum_{i=1}^R \big(\sigma_i - \widetilde\sigma_i) \widetilde{\ope A}_i \ct \widetilde{\ope B}_i}_{2}.
\end{align}
Now, $\verti{\sum_{i=1}^R \big(\sigma_i - \widetilde\sigma_i\big) \widetilde{\ope A}_i \ct \widetilde{\ope B}_i}_{2}^2 = \sum_{i=1}^R \big(\sigma_i - \widetilde\sigma_i\big)^2 \le \sum_{i=1}^\infty \big(\sigma_i - \widetilde\sigma_i\big)^2$ which, by von Neumann's trace inequality, is bounded by $\verti{\ope C - \widetilde{\ope C}}_{2}^2$ \citep[][Theorem~4.5.3]{hsing2015}. On the other hand,
\begin{align*}
\verti{\sum_{i=1}^R \sigma_i \Big(\ope A_i \ct \ope B_i - \widetilde{\ope A}_i \ct \widetilde{\ope B}_i\Big)}_{2} &\le \sum_{i=1}^R \sigma_i \verti{\ope A_i \ct \ope B_i - \widetilde{\ope A}_i \ct \widetilde{\ope B}_i}_{2} \\
&= \sum_{i=1}^R \sigma_i \verti{\ope A_i \ct \Big(\ope B_i - \widetilde{\ope B}_i\Big) + \Big(\ope A_i - \widetilde{\ope A}_i\Big) \ct \widetilde{\ope B}_i}_{2} \\
&\le \sum_{i=1}^R \sigma_i \bigg(\verti{\ope A_i - \widetilde{\ope A}_i}_{{\spa S}_2(\spa H_1)} + \verti{\ope B_i - \widetilde{\ope B}_i}_{{\spa S}_2(\spa H_2)}\bigg) \\
&\le 4\sqrt{2} \verti{\ope C - \widetilde{\ope C}}_{2} \bigg(\verti{\ope C}_{2} + \verti{\widetilde{\ope C}}_{2}\bigg) \sum_{i=1}^R \frac{\sigma_i}{\alpha_i}
\end{align*}
where the last inequality follows by part (b) of Lemma~\ref{lemma:perturbation_bound}. The proof is complete upon using these inequalities in conjunction with \eqref{eq:perturbation_R-sep_Eq1}.
\end{proof}

\subsection{Proofs of Results in Section~\ref{sec:analysis} and related discussions}\label{append:proofs}

\subsubsection{Fully Observed Functional Data}
\begin{proof}[Proof of Theorem~\ref{thm:fully_observed}]
To bound the error of the estimator, we use the following {\em bias-variance-type} decomposition
\begin{equation}\label{eq:full_observation_bias-variance}
\verti{\Chat_{R,N} - \ope C}_{2} \le \verti{\Chat_{R,N} - \ope C_R}_{2} + \verti{\ope C_R - \ope C}_{2},
\end{equation}
where $\ope C_R$ is the best $R$-separable approximation of $\ope C$. If $\ope C$ has separable expansion $\ope C=\sum_{i=1}^\infty \sigma_i \ope A_i \ct \ope B_i$, then $\ope C_R$ has separable expansion $\ope C_R=\sum_{i=1}^R \sigma_i \ope A_i \ct \ope B_i$, and
\begin{equation}\label{eq:full_observation_bias}
\verti{\ope C - \ope C_R}_{2}^2 = \sum_{i=R+1}^\infty \sigma_i^2.
\end{equation}
For the first part, we use the perturbation bound from Lemma~\ref{lemma:perturbation_R-sep} (see Appendix~\ref{append:perturbation}), to get
\begin{equation}\label{eq:full_observation_variance}
\verti{\Chat_{R,N} - \ope C_R}_{2} \le \bigg\{4\sqrt{2}\Big(\verti{\Chat_N}_{2} + \verti{\ope C}_{2}\Big) \sum_{r=1}^R \frac{\sigma_r}{\alpha_r} + 1\bigg\} \verti{\Chat_N - \ope C}_{2},
\end{equation}
where $\alpha_r = \min\{\sigma_{r-1}^2 - \sigma_r^2, \sigma_r^2 - \sigma_{r+1}^2\}$. 

Since $\E(\|\ope X\|^4)$ is finite, $\verti{\Chat_N - \ope C}_{2} = {\cal O}_{\Prob}(N^{-1/2})$ and $\verti{\Chat_N}_{2} = \verti{\ope C}_{2} + {\scriptstyle\cal O}_{\Prob}(1)$. Using these in \eqref{eq:full_observation_variance}, we get that
\begin{equation}\label{eq:full_observation_variance_bound}
\verti{\Chat_{R,N} - \ope C_R}_{2} = {\cal O}_{\Prob}\bigg(\frac{a_R}{\sqrt{N}}\bigg),
\end{equation}
where $a_R = \verti{\ope C}_2 \sum_{i=1}^R (\sigma_i/\alpha_i)$. The theorem follows by combining \eqref{eq:full_observation_bias} and \eqref{eq:full_observation_variance_bound} in \eqref{eq:full_observation_bias-variance}.
\end{proof}

\subsubsection{Discussion on the Convergence Rates}\label{subsec:optimal_R_convex}
Theorem~\ref{thm:fully_observed} shows that the convergence rate of the estimator depends on the decay of the separable component scores. Here we discuss about the rate under some special decay structures, in particular that of convexity. Assume that the separable component scores of $\ope C$ are convex, in the sense that the linearly interpolated scree plot $t \mapsto \sigma_t$  is convex, where $\sigma_t=(\lceil t \rceil-t)\sigma_{\lfloor t\rfloor} + (t-\lfloor t\rfloor)\sigma_{\lceil t \rceil}$ whenever $t$ is not an integer \citep{jirak2016}. Then it also holds that $t \mapsto \sigma_t^2$ is convex. Now, following \citet[][Eq.~(7.25)]{jirak2016}, we get that for $j > k$, 
\[
k\sigma_k \ge j\sigma_j \text{ and } \sigma_k - \sigma_j \ge (1 - k/j) \sigma_k.
\]
Also, because of the convexity of $t \mapsto \sigma_t^2$, it follows that $\sigma_{i-1}^2 - \sigma_i^2 \ge \sigma_i^2 - \sigma_{i+1}^2$ for every $i$, showing 
\[
\alpha_i = \min\{\sigma_{i-1}^2 - \sigma_i^2, \sigma_i^2 - \sigma_{i+1}^2\} = \sigma_i^2 - \sigma_{i+1}^2.
\]
So,
\[
\frac{\sigma_i}{\alpha_i} = \frac{\sigma_i}{\sigma_i^2 - \sigma_{i+1}^2} = \frac{\sigma_i}{(\sigma_i - \sigma_{i+1})\,(\sigma_i + \sigma_{i+1})} \ge \frac{1}{2\sigma_1} \frac{\sigma_i}{(\sigma_i - \sigma_{i+1})}.
\]
Now, for $0 < t < 1$, $(1-t)^{-1} > 1+t$. Since $\sigma_i > \sigma_{i+1}$ for $i=1,\ldots,R$, this shows that
\[
\sum_{i=1}^R \frac{\sigma_i}{\sigma_i - \sigma_{i+1}} = \sum_{i=1}^R \frac{1}{1 - \frac{\sigma_{i+1}}{\sigma_i}} > \sum_{i=1}^R \bigg(1 + \frac{\sigma_{i+1}}{\sigma_i}\bigg) > R.
\]
So, $a_R \gtrsim R$. Again, since $t \mapsto \sigma_t^2$ is convex, $\sigma_i^2 - \sigma_{i+1}^2 \ge \sigma_i^2/(i+1)$. Using this we get
\[
\begin{split}
a_R &\propto \sum_{i=1}^R \frac{\sigma_i}{\sigma_i^2 - \sigma_{i+1}^2} \le \sum_{i=1}^R \frac{\sigma_i^2}{\sigma_i(\sigma_i^2 - \sigma_{i+1}^2)} \le \sum_{i=1}^R \frac{i+1}{\sigma_i} \\
&\le \frac{1}{R\sigma_R} \sum_{i=1}^R i(i+1) = \frac{1}{\sigma_R} \frac{(R+1)(R+2)}{3} \asymp \frac{R^2}{\sigma_R},
\end{split}
\]
where we have used $i\sigma_i \ge R\sigma_R$ on the fourth step. It follows that, $a_R = {\cal O}(R^2/\sigma_R)$. Also, following \citet[][Eq.~(7.25)]{jirak2016}, we have $\sum_{i > R} \sigma_i^2 \le (R+1) \sigma_R^2$. Thus, from Theorem~\ref{thm:fully_observed},
\[
\verti{\Chat_{R,N} - \ope C}_{2} = {\cal O}(\sqrt{R} \sigma_R) + {\cal O}_{\Prob}\bigg(\frac{R^2}{\sigma_R\sqrt{N}}\bigg).
\]
On the other hand, $R\sigma_R \le \sigma_1$ implies that $\sqrt{R}\sigma_R \le \sigma_1/\sqrt{R}$. This finally shows that
\[
\verti{\Chat_{R,N} - \ope C}_{2} = {\cal O}\bigg(\frac{1}{\sqrt{R}}\bigg) + {\cal O}_{\Prob}\bigg(\frac{R^2}{\sigma_R\sqrt{N}}\bigg).
\]
So, for consistency, we need $R = R_N \to \infty$ as $N \to \infty$ while $\sigma_{R_N}^{-1} R_N^2 = {\scriptstyle\O}(\sqrt{N})$. The optimal rate of $R$ is obtained by solving $\sigma_{R_N}^{-4}R_N^3 \asymp N$. Clearly, the rate of decay of the separable component scores plays an important role in determining the {\em admissible} and {\em optimal} rates of $R$. For instance, if the scores have an exponential decay, i.e., $\sigma_R \sim R^{-\tau}$ with $\tau > 1$, we need $R_N = {\scriptstyle\O}(N^{1/(2\tau+4)})$ for consistency. The optimal rate is achieved by taking $R_N \asymp N^{1/(4\tau+3)}$, which gives
\[
\verti{\Chat_{R,N} - \ope C}_{2} = {\cal O}_{\Prob}\Big(N^{-\frac{2\tau-1}{4\tau+3}}\Big).
\]
The derived rates show a trade-off between the number of estimated components and the error. While the optimal rate for $R_N$ is a decreasing function of $\tau$, the rate of convergence of the error is an increasing function. In particular, if the scores decay slowly (i.e., $\tau$ is close to $1$), then we can estimate a relatively large number of components in the separable expansion, but this will likely not lead to a lower estimation error (since the scores which are cut off are still substantial). On the other hand, when we have a fast decay in the scores (i.e., $\tau$ is large), we can estimate a rather small number of components in the separable expansion, but with much better precision. 

Similar rates can be derived assuming polynomial decay of the scores, i.e. when $\sigma_R \sim R^\tau \rho^{-R}$ with $0<\rho<1, \tau \in \mathbb{R}$. In this case, consistency is achieved when $R_N^{2-\tau}\rho^{R_N} = {\scriptstyle \O}(\sqrt{N})$, while to obtain the optimal rate, one needs to solve $R_N^{3-4\tau}\rho^{4R_N} \asymp N$. Thus, in the case of polynomial decay of the scores (which is considerably slower than the exponential decay), we cannot expect to reliably estimate more than $\log N$ many components in the separable expansion.

\subsubsection{Data Observed on a Regular Grid}

Next, we prove Theorem~\ref{thm:discrete_observation}. Before doing that, we introduce a notational convention: when the operator $\ope A$ has a kernel that is piecewise constant on the rectangles $\{I_{i,j}^K\}$ (i.e. a ``pixelated kernel"), we will write $\|\mat{A}\|_{\rm F}$ for the Frobenius norm of the corresponding tensor of pixel coefficients. This is proportional to the Hilbert-Schmidt norm $\verti{\ope A}_{2}$ of $\ope A$. We summarise this in the lemma below, whose straightforward proof we omit (see \citealp[][Lemma~3]{masak2019} for more details).
\begin{lemma}\label{lemma:HS_to_Frobenius}
Let $\ope A$ be an operator with a pixelated kernel 
\[
a(t,s,t^\prime,s^\prime) = \sum_{i=1}^{K_1}\sum_{j=1}^{K_2}\sum_{k=1}^{K_1}\sum_{l=1}^{K_2} \mat{A}[i,j,k,l] \, \1\{(t,s) \in I_{i,j}^K, (t^\prime,s^\prime) \in I_{k,l}^K\},
\]
where $\mat{A} = (\mat{A}[i,j,k,l])_{i,j,k,l} \in \mathbb{R}^{K_1 \times K_2 \times K_1 \times K_2}$ is the tensorized version of $\ope A$. Then,
\[
\verti{\ope A}_{2} = \frac{1}{K_1K_2} \|\mat{A}\|_{\rm F},
\] 
where $\|\mat{A}\|_{\rm F} = \sqrt{\sum_{i=1}^{K_1}\sum_{j=1}^{K_2}\sum_{k=1}^{K_1}\sum_{l=1}^{K_2} \mat{A}^2[i,j,k,l]}$ is the Frobenius norm of $\mat{A}$. 
\end{lemma}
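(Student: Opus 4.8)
The plan is to reduce the identity to a direct computation of the $\mathcal{L}^2$ norm of the piecewise-constant kernel, invoking the isometric isomorphism between Hilbert-Schmidt operators and their kernels recalled in Section~\ref{sec:background}. Recall that for $A \in \mathcal{S}_2(\mathcal{L}^2([0,1]^2))$ with kernel $a$, the isometry $\mathcal{L}^2(E \times E) \simeq \mathcal{S}_2(\mathcal H)$ gives $\verti{A}_{2} = \|a\|_{\mathcal{L}^2([0,1]^4)}$ exactly (no constant), where the $\mathcal{L}^2$ norm is taken with respect to Lebesgue measure. Hence it suffices to establish that $\|a\|_{\mathcal{L}^2([0,1]^4)}^2 = (K_1K_2)^{-2}\|\Mat{A}\|_{\rm F}^2$, after which the claim follows by taking square roots.

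First I would exploit the fact that the four-dimensional pixels $I_{i,j}^K \times I_{k,l}^K$ are mutually disjoint, which is inherited from the non-overlapping property of the bivariate pixels $\{I_{i,j}^K\}$ stated in Section~\ref{sec:analysis}. Since the indicator functions in the definition of $a$ thus have pairwise disjoint supports, squaring the kernel annihilates all cross-terms, leaving
\begin{equation*}
a(t,s,t^\prime,s^\prime)^2 = \sum_{i=1}^{K_1}\sum_{j=1}^{K_2}\sum_{k=1}^{K_1}\sum_{l=1}^{K_2} \Mat{A}[i,j,k,l]^2 \, \1\{(t,s) \in I_{i,j}^K, (t^\prime,s^\prime) \in I_{k,l}^K\}.
\end{equation*}
Next I would integrate this over $[0,1]^4$. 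Each bivariate pixel has volume $|I_{i,j}^K| = 1/(K_1K_2)$, so each four-dimensional pixel has volume $|I_{i,j}^K|\,|I_{k,l}^K| = 1/(K_1^2K_2^2)$; integrating term by term therefore yields
\begin{equation*}
\|a\|_{\mathcal{L}^2([0,1]^4)}^2 = \frac{1}{K_1^2K_2^2} \sum_{i=1}^{K_1}\sum_{j=1}^{K_2}\sum_{k=1}^{K_1}\sum_{l=1}^{K_2} \Mat{A}[i,j,k,l]^2 = \frac{1}{K_1^2K_2^2}\|\Mat{A}\|_{\rm F}^2,
\end{equation*}
and taking square roots gives the stated identity.

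There is no genuine obstacle in this argument, which is why the authors omit it; the only point requiring care is the orthogonality of the pixel indicators, which is precisely what collapses the continuous $\mathcal{L}^2$ integral to a finite sum of squares, and the factor $1/(K_1K_2)$ arises purely as the volume normalization of the pixels. The one thing I would state explicitly, rather than leave implicit, is that the isometry used in the first step equates $\verti{A}_{2}$ with the Lebesgue-$\mathcal{L}^2$ norm of the kernel with no extra constant, so that all scaling is carried by the pixel volumes in the second step.
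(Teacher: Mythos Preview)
Your argument is correct and is exactly the straightforward computation one would expect: the isometry $\verti{A}_2 = \|a\|_{\mathcal{L}^2([0,1]^4)}$ reduces the claim to integrating a piecewise-constant function over disjoint four-dimensional pixels of volume $(K_1K_2)^{-2}$. The paper itself omits the proof entirely, deeming it straightforward and citing \cite{masak2019} for details, so there is no alternative approach to compare against.
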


\smallskip

\begin{proof}[Proof of Theorem~\ref{thm:discrete_observation}]
We decompose the error of our estimator as
\begin{equation}\label{eq:discrete_bias_variance}
\verti{\Chat_{R,N}^K - \ope C}_{2} \le  \verti{\Chat_{R,N}^K - \ope C_R}_{2} + \verti{\ope C_R - \ope C}_{2}.
\end{equation}
The second term $\verti{\ope C_R - \ope C}_{2}$ equals $\sqrt{\sum_{r=R+1}^\infty \sigma_r^2}$ (see \eqref{eq:full_observation_bias}). For the first term, we observe that $\Chat_{R,N}^K$ and $\ope C_R$ are the best $R$-separable approximations of $\Chat_N^K$ and $\ope C$, respectively. So, using Lemma~\ref{lemma:perturbation_R-sep}, we get
\begin{equation}\label{eq:discrete_variance}
\verti{\Chat_{R,N}^K - \ope C_R}_{2} \le \bigg\{4\sqrt{2}\Big(\verti{\ope C}_{2} + \verti{\Chat_N^K}_{2}\Big) \sum_{r=1}^R \frac{\sigma_r}{\alpha_r} + 1\bigg\} \verti{\Chat_N^K - \ope C}_{2},
\end{equation}
with $\alpha_r = \min\{\sigma_{r-1}^2 - \sigma_r^2, \sigma_r^2 - \sigma_{r+1}^2\}$. Next, we derive bounds on $\verti{\Chat_N^K - \ope C}_{2}$. Note that $\widehat{\ope C}_N^K$ is the empirical covariance of $\ope Y_1^K,\ldots,\ope Y_N^K$. Under the assumptions of the theorem, $\ope Y_1^K,\ldots,\ope Y_N^K$ are i.i.d.\ with mean zero and covariance $\widetilde{\ope C}^K = \E(\ope Y_1^K \otimes \ope Y_1^K)$. Let $\ope X_n^K = (\fun X_n^K(t,s): t \in T,s \in S)$ with $\fun X_n^K(t,s) = \sum_{i=1}^{K_1}\sum_{j=1}^{K_2} X_n^K[i,j]\,\1\big\{(t,s) \in I_{i,j}^K\big\}$ be the pixel-wise continuations of $\mat X_n^K$. It follows that $\ope X_1^K,\ldots,\ope X_N^K$ are i.i.d.\ with mean zero and covariance $\ope C^K = \E(\ope X_1^K \otimes \ope X_1^K)$. We use the general bound
\begin{equation}\label{eq:discrete_variance_decomposition}
\verti{\Chat_N^K - \ope C}_{2} \le \verti{\Chat_N^K - \widetilde{\ope C}^K}_{2} + \verti{\widetilde{\ope C}^K - \ope C^K}_{2} + \verti{\ope C^K - \ope C}_2.
\end{equation}
Note that $\widetilde{\ope C}^K$ and $\ope C^K$ are pixelated operators with discrete versions $\widetilde{\mat C}^K$ and $\mat C^K$, respectively, where $\widetilde{\mat C}^K[i,j,k,l] = \cov\big\{Y_1^K[i,j],Y_1^K[k,l]\big\}$ and $\mat C^K[i,j,k,l] = \cov\big\{X_1^K[i,j],X_1^K[k,l]\big\}$. Also, under the assumptions of the theorem, $\widetilde{\mat C}^K[i,j,k,l] - \mat C^K[i,j,k,l] = \sigma^2\,\1\{(i,j)=(k,l)\}$. Thus, by Lemma~\ref{lemma:HS_to_Frobenius}, we get
\begin{equation}\label{eq:discrete_variance_error}
\verti{\widetilde{\ope C}^K - \ope C^K}_2 = \frac{1}{K_1K_2} \Big\|\widetilde{\mat C}^K - \mat C^K\Big\|_{\rm F} = \frac{\sigma^2}{\sqrt{K_1K_2}}.
\end{equation}
We will now bound the remaining two terms in \eqref{eq:discrete_variance_decomposition}, separately under (S1) and (S2).

\medskip

Under (S1), it is easy to see that $\ope C^K$ is the integral operator with kernel
\begin{equation*}
c^K(t,s,t^\prime,s^\prime) = \sum_{i=1}^{K_1}\sum_{j=1}^{K_2}\sum_{k=1}^{K_1}\sum_{l=1}^{K_2} c(t_i^{K_1},s_j^{K_2},t_k^{K_1},s_l^{K_2})\, \1\{(t,s) \in I_{i,j}^K, (t^\prime,s^\prime) \in I_{k,l}^K\}.
\end{equation*}
Using this, we get
\begin{align}\label{eq:pixel_bias_M1}
\verti{\ope C^K - \ope C}_{2}^2 &= \iiiint \big\{c^K(t,s,t^\prime,s^\prime) - c(t,s,t^\prime,s^\prime)\big\}^2 dt ds dt^\prime ds^\prime \nonumber\\
&= \sum_{i,k=1}^{K_1}\sum_{j,l=1}^{K_2} \iiiint_{I_{i,j}^K \times I_{k,l}^K} \big\{c(t_i^{K_1},s_j^{K_2},t_k^{K_1},s_l^{K_2}) - c(t,s,t^\prime,s^\prime)\big\}^2 dt ds dt^\prime d s^\prime.
\end{align}
Because of the Lipschitz condition, for $(t,s) \in I_{i,j}^K,(t^\prime,s^\prime) \in I_{k,l}^K$,
\begin{align*}
\big|c(t_i^{K_1},s_j^{K_2},t_k^{K_1},s_l^{K_2}) &- c(t,s,t^\prime,s^\prime)\big|^2  \\
& \le L^2 \{(t-t_i^{K_1})^2 + (s-s_j^{K_2})^2 + (t^\prime - t_k^{K_1})^2 + (s^\prime - s_l^{K_2})^2\} \\
& \le L^2 \bigg(\frac{1}{K_1^2} + \frac{1}{K_2^2} + \frac{1}{K_1^2} + \frac{1}{K_2^2}\bigg) = 2L^2\bigg(\frac{1}{K_1^2} + \frac{1}{K_2^2}\bigg).
\end{align*}
Plugging this into \eqref{eq:pixel_bias_M1} yields
\begin{align}\label{eq:pixel_bias_bound_M1}
\verti{\ope C^K - \ope C}_{2}^2 \le 2L^2\bigg(\frac{1}{K_1^2} + \frac{1}{K_2^2}\bigg).
\end{align}

For the first part in \eqref{eq:discrete_variance_decomposition}, we observe that $\Chat_N^K$ is the sample covariance of $\ope Y_1^K,\ldots,\ope Y_N^K$, which are i.i.d. with $\E(\ope Y_1^K) = 0$ and $\var(\ope Y_1^K) = \widetilde{\ope C}^K$. Also, $\Chat_N^K$ and $\widetilde{\ope C}^K$ are pixelated operators with discrete versions $\widehat{\mat C}_N^K$ and $\widetilde{\mat C}^K$, respectively, where $\widehat{\mat C}_N^K = N^{-1} \sum_{n=1}^N (\mat{X}_n^K - \bar{\mat{X}}_N^K) \otimes (\mat{X}_n^K - \bar{\mat{X}}_N^K)$ is the sample variance based on $\mat Y_1^K,\ldots,\mat Y_N^K$ and $\widetilde{\mat C}^K$ is as defined before. So, by Lemma~\ref{lemma:HS_to_Frobenius}, 
\begin{equation}\label{eq:pixel_HS_to_Frob}
\verti{\Chat_N^K - \ope C^K}_{2} = \frac{1}{K_1K_2} \left\|\widehat{\mat C}_N^K - \widetilde{\mat C}^K\right\|_{\rm F}.
\end{equation}
For the Frobenius norm, we write
\begin{align}\label{eq:pixel_Frob_bound}
\left\|\widehat{\mat C}_N^K - \widetilde{\mat C}^K\right\|_{\rm F} &=\left\|\frac{1}{N}\sum_{n=1}^N \mat Y_n^K \otimes \mat Y_n^K - \bar{\mat Y}_N^K \otimes \bar{\mat Y}_N^K - \widetilde{\mat C}^K\right\|_{\rm F} \nonumber\\
&\le \left\|\frac{1}{N}\sum_{n=1}^N \mat Y_n^K \otimes \mat Y_n^K - \widetilde{\mat C}^K\right\|_{\rm F} + \left\|\bar{\mat Y}_N^K \otimes \bar{\mat Y}_N^K\right\|_{\rm F}.
\end{align}
Now, $\left\|\bar{\mat Y}_N^K \otimes \bar{\mat Y}_N^K\right\|_{\rm F} = \left\|\bar{\mat Y}_N^K\right\|_{\rm F}^2 = \left\|N^{-1}\sum_{n=1}^N \mat Y_n^K\right\|_F^2$, where $\mat Y_n^K$ are i.i.d., zero-mean elements. So,
\[
\E\left\|\bar{\mat Y}_N^K \otimes \bar{\mat Y}_N^K\right\|_{\rm F} = \E\left\|\frac{1}{N} \sum_{n=1}^N \mat Y_n^K\right\|_{\rm F}^2 = \frac{1}{N}\E\left\|\mat Y_1^K\right\|_{\rm F}^2.
\]
Again, $\left\|\mat Y_1^K\right\|_{\rm F}^2 = \sum_{i=1}^{K_1}\sum_{j=1}^{K_2} \{Y_1^K[i,j]\}^2$, so
\[
\E\left\|\mat Y_1^K\right\|_{\rm F}^2 = \sum_{i=1}^{K_1}\sum_{j=1}^{K_2} \E\big(Y_1^K[i,j]\big)^2 = \sum_{i=1}^{K_1}\sum_{j=1}^{K_2} \var\big(Y_1^K[i,j]\big).
\]
By the independence of $\mat X_1^K$ and $\mat E_1^K$, it follows that
\begin{equation}\label{eq:discrete_measurement_var1}
\var\big(Y_1^K[i,j]\big) = \var\big(X_1^K[i,j]\big) + \var\big(E_1^K[i,j]\big) = \var\big(X_1^K[i,j]\big) + \sigma^2.
\end{equation}
Under the measurement scheme (S1), $X_1^K[i,j] = \fun X_1(t_i^{K_1},s_j^{K_2})$, so 
\[
\var\big(X_1^K[i,j]\big) = c(t_i^{K_1},s_j^{K_2},t_i^{K_1},s_j^{K_2}) \le \sup_{(t,s) \in [0,1]^2} c(t,s,t,s) =: S_1,
\]
where $S_1$ is finite since we assume that $\ope X$ has continuous sample paths. This shows that 
\begin{equation}\label{eq:pixel_Frob_bound_2}
\E\left\|\bar{\mat Y}_N^K \otimes \bar{\mat Y}_N^K\right\|_{\rm F} \le \frac{K_1K_2(S_1+\sigma^2)}{N}.
\end{equation}

Next, we define $\mat Z_n^K = \mat Y_n^K \otimes \mat Y_n^K - \widetilde{\mat C}^K$. Then, $\mat Z_1^K,\ldots,\mat Z_N^K$ are i.i.d., mean centered, which gives
\begin{align*}
\E\left\|\frac{1}{N}\sum_{n=1}^N \mat Y_n^K \otimes \mat Y_n^K - \widetilde{\mat C}^K\right\|_{\rm F}^2 = \E\left\|\frac{1}{N}\sum_{n=1}^N \mat Z_n^K\right\|_{\rm F}^2 = \frac{1}{N} \E\left\|\mat Z_1^K\right\|_{\rm F}^2.
\end{align*}
Now,
\[
\E\left\|\mat Z_1^K\right\|_{\rm F}^2 = \sum_{i=1}^{K_1}\sum_{j=1}^{K_2}\sum_{k=1}^{K_1}\sum_{l=1}^{K_2} \E\big(Y_1^K[i,j]Y_1^{K}[k,l] - \widetilde C^K[i,j,k,l]\big)^2
\]
For the summands, we have
\begin{align}\label{eq:discrete_measurement_var2}
&\E\Big\{\big(Y_1^K[i,j]Y_1^K[k,l] - \widetilde C^K[i,j,k,l]\big)^2\Big\} \nonumber\\
&=\E\Big\{\big(X_1^K[i,j]X_1^K[k,l] - C^K[i,j,k,l] + X_1^K[i,j]E_1^K[k,l] + E_1^K[i,j]X_1^K[k,l] \nonumber\\
&\kern45ex + E_1^K[i,j]E_1^K[k,l] - \sigma^2\1_{(i,j)=(k,l)}\big)^2\Big\} \nonumber\\
&\le 4\bigg[\E\Big\{\big(X_1^K[i,j]X_1^K[k,l] - C^K[i,j,k,l]\big)^2\Big\} + \E\Big\{\big(X_1^K[i,j]E_1^K[k,l]\big)^2\Big\} \nonumber\\
&\kern15ex + \E\Big\{\big(E_1^K[i,j]X_1^K[k,l]\big)^2\Big\} + \E\Big\{\big(E_1^K[i,j]E_1^K[k,l] - \sigma^2\1_{(i,j)=(k,l)}\big)^2\Big\}\bigg] \nonumber\\
&= 4\bigg[\var\big(X_1^K[i,j]X_1^K[k,l]\big) + \E\Big\{\big(X_1^K[i,j]\big)^2\Big\} \E\Big\{\big(E_1^K[k,l]\big)^2\Big\} \nonumber\\
&\kern15ex + \E\Big\{\big(E_1^K[i,j]\big)^2\Big\} \E\Big\{\big(X_1^K[k,l]\big)^2\Big\} + \var\big(E_1^K[i,j]E_1^K[k,l]\big)\bigg] \nonumber\\
&= 4\bigg[\var\big(X_1^K[i,j]X_1^K[k,l]\big) + \sigma^2 \var\big(X_1^K[i,j]\big) \\
&\kern15ex+ \sigma^2 \var\big(X_1^K[k,l]\big) + \var\big(E_1^K[i,j]E_1^K[k,l]\big)\bigg].
\end{align}
Now,
\begin{align*}
\begin{split}
\var\big(X_1^K[i,j]X_1^K[k,l]\big) &= \var\big(\fun X_1(t_i^{K_1},s_j^{K_2}) \fun X_1(t_k^{K_1}s_l^{K_2})\big) \\
&\le \sup_{(t,s,t^\prime,s^\prime) \in [0,1]^4} \var\big(\fun X_1(t,s)\fun X_1(t^\prime,s^\prime)\big) =:S_2,
\end{split}
\end{align*}
where $S_2$ is finite since $\ope X$ has continuous sample paths and finite fourth moment. On the other hand,
\begin{align*}
\var\big(E_1^K[i,j]E_1^K[k,l]\big) = \begin{cases}\gamma^2 - \sigma^4 & \text{if } (i,j) = (k,l) \\ \sigma^4 & \text{otherwise} \end{cases}.
\end{align*}
where $\gamma^2:=E(|E_1^K[i,j]|^4)<\infty$.
Combining these, we get that
\begin{align*}
&\sum_{i=1}^{K_1}\sum_{j=1}^{K_2}\sum_{k=1}^{K_1}\sum_{l=1}^{K_2} \E\Big\{\big(Y_1^K[i,j]Y_1^K[k,l] - \widetilde C^K[i,j,k,l]\big)^2\Big\} \\
&= 4\bigg[\sum_{i=1}^{K_1}\sum_{j=1}^{K_2}\sum_{k=1}^{K_1}\sum_{l=1}^{K_2} \var\big(X_1^K[i,j]X_1^K[k,l]\big) + K_1K_2\sigma^2\sum_{i=1}^{K_1}\sum_{j=1}^{K_2} \var\big(X_1^K[i,j]\big) \\
&\kern10ex+ K_1K_2\sigma^2\sum_{k=1}^{K_1}\sum_{l=1}^{K_2} \var\big(X_1^K[k,l]\big) + \sum_{i=1}^{K_1}\sum_{j=1}^{K_2}\sum_{k=1}^{K_1}\sum_{l=1}^{K_2} \var\big(E_1^K[i,j]E_1^K[k,l]\big)\bigg] \\
&\le 4\big\{K_1^2K_2^2S_2 + 2K_1^2K_2^2S_1\sigma^2 + K_1K_2(\gamma^2-\sigma^4) + (K_1^2K_2^2-K_1K_2)\sigma^4\big\} \\
&=4K_1^2K_2^2\bigg(S_2 + 2S_1\sigma^2 + \sigma^4 + \frac{\gamma^2-\sigma^4}{K_1K_2}\bigg).
\end{align*}
Thus,
\[
\E\left\|\frac{1}{N}\sum_{n=1}^N \mat Y_n^K \otimes \mat Y_n^K - \widetilde{\mat C}^K\right\|_{\rm F}^2 \le \frac{4K_1^2K_2^2}{N}\bigg(S_2 + 2S_1\sigma^2 + \sigma^4 + \frac{\gamma^2-\sigma^4}{K_1K_2}\bigg),
\]
which implies that
\begin{align}\label{eq:pixel_Frob_bound_1}
\E\left\|\frac{1}{N}\sum_{n=1}^N \mat Y_n^K \otimes \mat Y_n^K - \widetilde{\mat C}^K\right\|_{\rm F} &\le \sqrt{\E\left\|\frac{1}{N}\sum_{n=1}^N \mat Y_n^K \otimes \mat Y_n^K - \widetilde{\mat C}^K\right\|_{\rm F}^2} \nonumber\\
&\le \frac{2K_1K_2}{\sqrt{N}}\sqrt{S_2 + 2S_1\sigma^2 + \sigma^4 + \frac{\gamma^2-\sigma^4}{K_1K_2}}.
\end{align}
Combining \eqref{eq:pixel_Frob_bound}, \eqref{eq:pixel_Frob_bound_2} and \eqref{eq:pixel_Frob_bound_1}, we obtain
\[
\E\left\|\widehat{\mat C}_N^K - \widetilde{\mat C}^K\right\|_{\rm F} \le \frac{2K_1K_2}{\sqrt{N}}\sqrt{S_2 + 2S_1\sigma^2 + \sigma^4 + \frac{\gamma^2-\sigma^4}{K_1K_2}} + \frac{K_1K_2(S_1+\sigma^2)}{N}.
\]
Finally, \eqref{eq:pixel_HS_to_Frob} yields
\[
\E\verti{\Chat_N^K - \widetilde{\ope C}^K}_{2} \le \frac{2}{\sqrt{N}}\sqrt{S_2 + 2S_1\sigma^2 + \sigma^4 + \frac{\gamma^2-\sigma^4}{K_1K_2}} + \frac{S_1+\sigma^2}{N} = \O(N^{-1/2}),
\]
uniformly in $K_1,K_2$. This shows
\begin{equation}\label{eq:pixel_variance_bound_M1}
\verti{\Chat_N^K - \widetilde{\ope C}^K}_{2} = \O_{\Prob}(N^{-1/2}),
\end{equation}
and the ${\cal O}_{\Prob}$ term is uniform in $K_1,K_2$. Using \eqref{eq:discrete_variance_error}, \eqref{eq:pixel_bias_bound_M1} and \eqref{eq:pixel_variance_bound_M1} in \eqref{eq:discrete_variance_decomposition}, we have
\begin{equation}\label{eq:pixel_bound_M1}
\verti{\Chat_N^K - \ope C}_{2} = \O_{\Prob}(N^{-1/2}) + L \sqrt{\frac{2}{K_1^2}+\frac{2}{K_2^2}} + \frac{\sigma^2}{\sqrt{K_1K_2}},
\end{equation}
where the ${\cal O}_{\Prob}$ term is uniform in $K_1,K_2$.

Next, we consider the measurement scheme (S2). Observe that, under this scheme, $\ope C^K$ is the integral operator with kernel
\begin{align}\label{eq:M2_kernel}
c^K(t,s,t^\prime,s^\prime) &= \cov\big\{\fun X_1^K(t,s),\fun X_1^K(t^\prime,s^\prime)\big\} \\
&= \sum_{i=1}^{K_1}\sum_{j=1}^{K_2}\sum_{k=1}^{K_1}\sum_{l=1}^{K_2} \bar c^K[i,j,k,l]\, \1\{(t,s) \in I_{i,j}^K, (t^\prime,s^\prime) \in I_{k,l}^K\}, 
\end{align}
where
\begin{equation*}
\bar c^K[i,j,k,l] = \frac{1}{|I_{i,j}^K|\,|I_{k,l}^K|} \iint_{I_{i,j}^K \times I_{k,l}^K} c(u,v,u^\prime,v^\prime) du dv du^\prime dv^\prime.
\end{equation*}
Now, as in \eqref{eq:pixel_bias_M1}, we get
\begin{align}\label{eq:pixel_bias_M2}
\verti{\ope C^K - \ope C}_{2}^2 &= \iiiint \big\{c^K(t,s,t^\prime,s^\prime) - c(t,s,t^\prime,s^\prime)\big\}^2 dt ds dt^\prime ds^\prime \nonumber\\
&= \sum_{i=1}^{K_1}\sum_{j=1}^{K_2}\sum_{k=1}^{K_1}\sum_{l=1}^{K_2} \iint_{I_{i,j}^K \times I_{k,l}^K} \big\{\bar c^K[i,j,k,l] - c(t,s,t^\prime,s^\prime)\big\}^2 dt ds dt^\prime ds^\prime.
\end{align}
Using \eqref{eq:M2_kernel} and the Lipschitz condition on $c$, given $(t,s) \in I_{i,j}^K, (t^\prime,s^\prime) \in I_{k,l}^K$, one has
\begin{align*}
\big|\bar c^K[i,j,k,l] &- c(t,s,t^\prime,s^\prime)\big| \\
&= \left|\frac{1}{|I_{i,j}^K|\,|I_{k,l}^K|} \iiiint_{I_{i,j}^K \times I_{k,l}^K} \big\{c(u,v,u^\prime,v^\prime) - c(t,s,t^\prime,s^\prime)\big\} du dv du^\prime dv^\prime \right|\\
&\le \frac{1}{|I_{i,j}^K|\,|I_{k,l}^K|} \iiiint_{I_{i,j}^K \times I_{k,l}^K} \big|c(u,v,u^\prime,v^\prime) - c(t,s,t^\prime,s^\prime)\big| du dv du^\prime dv^\prime \\
&\le \frac{1}{|I_{i,j}^K|\,|I_{k,l}^K|} \iiiint_{I_{i,j}^K \times I_{k,l}^K} L \sqrt{\frac{1}{K_1^2} + \frac{1}{K_2^2} + \frac{1}{K_1^2} + \frac{1}{K_2^2}} ~du dv du^\prime dv^\prime \\
&= L \sqrt{\frac{2}{K_1^2} + \frac{2}{K_2^2}}.
\end{align*}
Using this in \eqref{eq:pixel_bias_M2} yields
\begin{align}\label{eq:pixel_bias_bound_M2}
\verti{\ope C^K - \ope C}_{2}^2 \le 2L^2\bigg(\frac{1}{K_1^2} + \frac{1}{K_2^2}\bigg).
\end{align}
For $\verti{\Chat_K^N - \widetilde{\ope C}^K}_{2}$, we proceed similarly as under (S1). We need to bound $\E\left\|\mat Y_1^K\right\|_{\rm F}^2$ and $\E\left\|\mat{Z}_1^K\right\|_{\rm F}^2$ for which we need bounds on $\sum_{i=1}^{K_1}\sum_{j=1}^{K_2} \var\big(X_1^K[i,j]\big)$ and $\sum_{i=1}^{K_1}\sum_{j=1}^{K_2}\sum_{k=1}^{K_1}\sum_{l=1}^{K_2} \var\big(\mat{X}_1^K[i,j]\mat{X}_1^{K}[k,l]\big)$ (see \eqref{eq:discrete_measurement_var1}, \eqref{eq:discrete_measurement_var2}). Recall that under measurement scheme (S2),
\[
X_1^K[i,j] = \frac{1}{|I_{i,j}^K|} \int_{I_{i,j}^K} \fun X_1(t,s) dt ds = {\sqrt{K_1K_2}} \langle \ope X_1,g_{i,j}^K\rangle,
\]
where $g_{i,j}^K(t,s) = \sqrt{K_1K_2} \, \1\{(t,s) \in I_{i,j}^K\}$. So, $\var\big(X_1^K[i,j]\big) = K_1K_2 \var\big(\langle \ope X_1,g_{i,j}^K\rangle\big) = K_1K_2 \langle \ope Cg_{i,j}^K,g_{i,j}^K \rangle$. Observe that $g_{i,j}^K,i=1,\ldots,K_1,j=1,\ldots,K_2$ are orthonormal in ${\spa L}_2([0,1]^2)$ (i.e., $\langle g_{i,j}^K,g_{k,l}^K \rangle = \1\{(i,j) = (k,l)\}$). Thus, we can extend them to form a basis of ${\spa L}_2([0,1]^2)$ (c.f. the proof of Theorem~2 in \citealp{masak2019}). We again denote this extended basis by $(g_{i,j}^K)_{i,j=1}^\infty$. Since $\ope C$ is positive semi-definite, $\langle \ope Cg_{i,j}^K,g_{i,j}^K \rangle \ge 0$ for every $i,j \ge 1$. Thus,
\begin{align}\label{eq:pixel_M2_Frob_bound_2}
\sum_{i=1}^{K_1}\sum_{j=1}^{K_2} \var\big(X_1^K[i,j]\big) = K_1K_2 \sum_{i=1}^{K_1}\sum_{j=1}^{K_2} \langle \ope Cg_{i,j}^K,g_{i,j}^K \rangle \le K_1K_2 \sum_{i=1}^{\infty}\sum_{j=1}^{\infty} \langle \ope Cg_{i,j}^K,g_{i,j}^K \rangle = K_1K_2 \verti{\ope C}_1.
\end{align}
Again, $X_1[i,j]X_1[k,l] = K_1K_2 \langle \ope X_1 \otimes \ope X_1, g_{i,j}^K \otimes g_{k,l}^K \rangle$. This shows that 
\[
\var\big(X_1^K[i,j]X_1^{K}[k,l]\big) = K_1^2K_2^2 \langle \Gamma(g_{i,j}^K \otimes g_{k,l}^K),g_{i,j}^K \otimes g_{k,l}^K \rangle,
\]
where $\Gamma = \E(\ope X_1 \otimes \ope X_1 \otimes \ope X_1 \otimes \ope X_1) - \ope C \otimes \ope C$ is the covariance operator of $\ope X_1 \otimes \ope X_1$. Note that the assumption $\E(\|\ope X_1\|^4) < \infty$ ensures the existence of $\Gamma$, and further assures that $\Gamma$ is a trace-class operator. Since the $(g_{i,j}^K)$ are orthornormal in ${\spa L}_2([0,1]^2)$, the $\big(g_{i,j}^K \otimes g_{k,l}^K \big)_{i,j,k,l}$ are orthonormal in ${\spa L}_2([0,1]^4)$. So, we can extend them to a basis $(g_{i,j}^K \otimes g_{k,l}^K)_{i,j,k,l=1}^\infty$ of ${\spa L}_2([0,1]^4)$. Since $\Gamma$ is positive semi-definite
\begin{align}\label{eq:pixel_M2_Frob_bound_1}
\sum_{i=1}^{K_1}\sum_{j=1}^{K_2}\sum_{k=1}^{K_1}\sum_{l=1}^{K_2} \var\big(\mat{X}_1^K[i,j]\mat{X}_1^{K}[k,l]\big) &= K_1^2K_2^2 \sum_{i=1}^{K_1}\sum_{j=1}^{K_2}\sum_{k=1}^{K_1}\sum_{l=1}^{K_2} \langle \Gamma(g_{i,j}^K \otimes g_{k,l}^K), g_{i,j}^K \otimes g_{k,l}^K \rangle \nonumber\\
&\le K_1^2K_2^2 \sum_{i=1}^{\infty}\sum_{j=1}^{\infty}\sum_{k=1}^{\infty}\sum_{l=1}^{\infty} \langle \Gamma(g_{i,j}^K \otimes g_{k,l}^K), g_{i,j}^K \otimes g_{k,l}^K \rangle \nonumber\\
&= K_1^2 K_2^2 \verti{\Gamma}_{1}.
\end{align}
Using \eqref{eq:pixel_M2_Frob_bound_2} and \eqref{eq:pixel_M2_Frob_bound_1}, and proceeding as in the case of (S1), we get
\[
\E\verti{\Chat_N^K - \widetilde{\ope C}^K}_{2} \le \frac{2}{\sqrt{N}}\sqrt{\verti{\Gamma}_1 + 2\verti{\ope C}_1\sigma^2 + \sigma^4 + \frac{\gamma^2-\sigma^4}{K_1K_2}} + \frac{\verti{\ope C}_1+\sigma^2}{N},
\]
that is,
\begin{equation}\label{eq:pixel_variance_bound_M2}
\verti{\Chat_N^K - \widetilde{\ope C}^K}_{2} = \O_{\Prob}(N^{-1/2}),
\end{equation}
where the $\O_{\Prob}$ term is uniform in $K_1,K_2$. Finally, using \eqref{eq:discrete_variance_error}, \eqref{eq:pixel_bias_bound_M2} and \eqref{eq:pixel_variance_bound_M2} in \eqref{eq:discrete_variance_decomposition}, we get
\begin{equation}\label{eq:pixel_bound_M2}
\verti{\Chat_N^K - \ope C}_{2} = \O_{\Prob}(N^{-1/2}) + L \sqrt{\frac{2}{K_1^2}+\frac{2}{K_2^2}} + \frac{\sigma^2}{\sqrt{K_1K_2}},
\end{equation}
where the ${\cal O}_{\Prob}$ term is uniform in $K_1,K_2$. Observe that we obtain the same rate under both (S1) and (S2). Also observe that, under both the schemes,
\[
\verti{\Chat_N^K}_{2} = \verti{\ope C}_{2} + \O_{\Prob}(N^{-1/2}) + L \sqrt{\frac{2}{K_1^2}+\frac{2}{K_2^2}} + \frac{\sigma^2}{\sqrt{K_1K_2}}.
\]
Using these in \eqref{eq:discrete_variance}, we get
\begin{align*}
&\verti{\Chat_{R,N}^K - \ope C_R}_{2} \\
&\le \left\{4\sqrt{2}\left(\verti{\ope C}_{2} + \verti{\ope C}_{2} + \O_{\Prob}(N^{-1/2}) + L \sqrt{\frac{2}{K_1^2}+\frac{2}{K_2^2}} + \frac{\sigma^2}{\sqrt{K_1K_2}}\right) \sum_{r=1}^R \frac{\sigma_r}{\alpha_r} + 1\right\} \\
&\kern40ex\times \left\{\O_{\Prob}(N^{-1/2}) + L \sqrt{\frac{2}{K_1^2}+\frac{2}{K_2^2}} + \frac{\sigma^2}{\sqrt{K_1K_2}}\right\} \\
&= \O_{\Prob}\bigg(\frac{a_R}{\sqrt{N}}\bigg) + \big(16 a_R + \sqrt{2}\big)L \sqrt{\frac{1}{K_1^2} + \frac{1}{K_2^2}} + \frac{8\sqrt{2}L^2}{\verti{\ope C}_2}\bigg(\frac{1}{K_1^2} + \frac{1}{K_2^2}\bigg) a_R \\
&\kern20ex+\frac{(8\sqrt{2}a_R+1)\sigma^2}{\sqrt{K_1K_2}} + \frac{4\sqrt{2}\sigma^4a_R}{\verti{\ope C}_2K_1K_2} + \frac{16L\sigma^2a_R}{\verti{\ope C}_2\sqrt{K_1K_2}} \sqrt{\frac{1}{K_1^2}+\frac{1}{K_2^2}},
\end{align*}
where the $\O_{\Prob}$ term is uniform in $K_1,K_2$.
\end{proof}

\subsubsection{Unbalanced Design}

Finally, we consider the case where the surfaces are observed at irregular (and possibly different number of) locations. Here, we provide a proof of Theorem~\ref{thm:rate_of_convergence_smoother} in the main text.

\begin{proof}[Proof of Theorem~\ref{thm:rate_of_convergence_smoother}]
We start by using the inequality
\begin{equation}\label{eq:rate_smoother_eq1}
\verti{\widetilde{\ope C}_{R,N} - \ope C}_2 \le \verti{\widetilde{\ope C}_{R,N} - \ope C_R}_2 + \verti{\ope C_R - \ope C}_2.
\end{equation}
For the second part on the right-hand side, using \eqref{eq:full_observation_bias}, we get that $\verti{\ope C_R - \ope C}_2 = \big(\sum_{r=1}^R \sigma_r^2\big)^{1/2}$. For the first part, we use Lemma~\ref{lemma:perturbation_R-sep} to get
\begin{equation}\label{eq:rate_smoother_eq2}
\verti{\widetilde{\ope C}_{R,N} - \ope C_R}_2 \le \bigg\{4\sqrt{2}\big(\verti{\ope C}_2 + \verti{\widetilde{\ope C}_N}_2\big)\sum_{r=1}^R \frac{\sigma_r}{\alpha_r}+1\bigg\} \verti{\widetilde{\ope C}_N - \ope C}_2.
\end{equation}
Now,
\[
\verti{\widetilde{\ope C}_N - \ope C}_2 \le \verti{\widetilde{\ope C}_N - \widehat{\ope C}_N}_2 + \verti{\widehat{\ope C}_N - \ope C}_2.
\]
The first part on the right is $\O_{\Prob}(b_N)$ by the assumption of the theorem, while the second part is $\O_{\Prob}(N^{-1/2})$ since $\E(\|\ope X\|^4) < \infty$. Hence, we get $\verti{\widetilde{\ope C}_N - \ope C}_2 \le \O_{\Prob}(b_N)$, which also shows
\[
\verti{\widetilde{\ope C}_N}_2 \le \verti{\ope C}_2 + \verti{\widetilde{\ope C}_N - \ope C}_2 \le \verti{\ope C}_2 + \O_{\Prob}(b_N).
\]
Using these in \eqref{eq:rate_smoother_eq2}, we get $\verti{\widetilde{\ope C}_{R,N} - \ope C_R}_2 = \O_{\Prob}(a_R b_N)$ where $a_R = \verti{\ope C}_2 \sum_{r=1}^R \sigma_r/\alpha_r$. The result now follows by substituting this in \eqref{eq:rate_smoother_eq1}.
\end{proof}
%\soham{WE ARE ASSUMING THAT $b_N \ge N^{-1/2}$, WHICH I BELIEVE IS REASONABLE. MORE PRECISELY, THE RESULT WOULD HAVE $\tilde b_N = b_N \vee N^{-1/2}$ in place of $b_N$.}

\subsection{Setup for Simulations}\label{sec:setup_for_simulations}

The parametric covariance used in Example~\ref{ex:gneiting} and Section~\ref{sec:simulations} has kernel
\[
c(t,s,t',s') = \frac{\sigma^2}{(a^2|t-t'|^{2 \alpha}+1)^\tau} \exp\left( \frac{b^2 |s -s' |^{2 \gamma}}{ (a^2|t-t'|^{2 \alpha}+1)^{\beta\gamma} } \right).
\]
This covariance was introduced by \cite{gneiting2002}. Among the various parameters, $a$, resp. $b$, control the temporal, resp. spatial, domain scaling, and $\beta \in [0,1]$ controls the departure from separability with $\beta=0$ corresponding to a separable model. We fix $\beta=0.7$, which seems to be as a rather high degree of non-separability given the range of $\beta$, but one should note that non-separability is rather small for this model regardless of the choice of $\beta$ \citep{genton2007}. The remaining parameters are set $\alpha=\gamma=\sigma^2=\tau=1$, and the scaling parameters are set as $a=b=20$ on the $[0,1]$ interval, which corresponds to considering the domain as $[0,20]$ interval with $a=b=1$. We stretch the domain this way in order to strengthen the non-separability of the model. Even though the covariance is stationary, this fact is completely ignored. None of the methods presented in this paper make use of stationarity, and the reported results are not affected by it.

The non-parametric covariance, i.e. the second considered setup in our simulation study, is constructed as follows. Let $\{ \phi_1, \ldots, \phi_{50} \}$ denote the first fifty functions of the trigonometric basis, and let $J_1=\{ 1,4,\ldots, 49\}$, $J_2=\{ 2,5,\ldots, 50\}$ and $J_3=\{ 3,6,\ldots, 48\}$ be three index sets. $A_1$, $A_2$ and $A_3$ are constructed to have norm one, power decay of the eigenvalues, and the trigonometric system as their eigenbasis. However, the leading eigenfunctions in $\mat A_r$ are those trigonometric functions $\phi_l$ with indices in $J_r$, $r=1,\ldots,3$. $\mat B_1$, $\mat B_2$ and $\mat B_3$ are chosen in the same way, and the resulting covariance is chosen as
\[
\mat C = \sum_{r=1}^3 \sigma_r \mat A_r \ct \mat B_r
\]
with $\sigma_1 = 8$, $\sigma_2=4$ and $\sigma_3=2$. Note that these are not the separable component scores, since $\mat A$'s are not orthogonal, and the same is true for $\mat B$'s. In other words, the previous equation is \emph{not} a separable expansion. Still, the covariance is $3$-separable, i.e. it is a superposition of three separable terms. Moreover, all the separable terms have the same eigenfunctions, which are outer products of the trigonometric functions. Hence the covariance is by construction weakly separable \citep{lynch2018}.

In the main paper, we compare the proposed methodology with the weakly separable estimator of \cite{lynch2018}. Since the codes for the weakly separable estimator are only available in Matlab, we use our own \textsf{R} implementation.
A weakly separable estimator is obtained in two steps. The product eigenbasis is estimated from the data via partial tracing of \cite{aston2017}, and the eigenvalues are estimated in the subsequent step. \cite{lynch2018} propose to enforce low-rankness by only retaining a part of the basis, such that at least 95\% of variance is explained in both dimensions. We follow this suggestion.

In the following section, we work with a third covariance, which is also by construction $R$-separable and allows a perfect control of the separable component scores. Here we describe how exactly the $\mat A$'s, $\mat B$'s and $\sigma$'s of equation \eqref{eq:simul_cov} are chosen.

In the first half of Section~\ref{sec:further_simulations} leading up to Figure \ref{fig:rancov}, our goal is to show how the constant $a_R$ of Theorem~\ref{thm:fully_observed}, which is related to the decay of the separable component scores, affects the convergence speed, i.e. the ``variance'' part of the error. To this end, we generate a random ONB, and split this basis into $R=4$ sets of vectors, say $\mathcal{D}_1, \ldots, \mathcal{D}_4$. To generate $\mat A_r$ for $r=1,\ldots,4$, we use the vectors from $\mathcal{D}_r$ as the eigenvectors, while the non-zero eigenvalues are set as $|\mathcal{D}_r|,\ldots,1$. Hence $\mat A_r$ is singular with random eigenvectors and the eigenvalues, which are non-zero, are linearly decaying. The procedure is the same for $\mat B_1,\ldots,\mat B_4$. Note that the exact form of the covariances $\mat A$ and $\mat B$ or the fact that they are low-rank themselves is probably not affecting the results in any way. However, it is not easy to come up with a generation procedure for $\mat C$, which allows for both the control of its separable component scores, which we require, and an efficient data simulation. One basically needs $\mat A_1,\ldots,\mat A_4$ to be positive semi-definite and orthogonal at the same time. And the only way to achieve this is to have $\mat A_1,\ldots,\mat A_4$ low-rank with orthogonal eigenspaces.

In the second half of Section~\ref{sec:further_simulations} leading up to Figure~\ref{fig:inverse}, the situation is slightly simpler. In order to emulate more realistic inversion problems, we first simulate the data, and find the $R$-separable estimate. Since we are only interested in controlling the separable component scores after the data were already simulated, we have complete freedom in the choice of $\mat A$'s, $\mat B$'s and $\sigma$'s in \eqref{eq:simul_cov}. We set both $\mat A$'s and $\mat B$'s in the similar way as an output of a procedure describe as follows, with the exception of $\mat A_1$ and $\mat B_1$, which are chosen equal to the covariance of Brownian motion, standardized to have Hilbert-Schmidt norm equal to one. Since we keep all the $\sigma$'s equal to one, ordering of the covariance is immaterial. For a fixed $r=2,\ldots,R$, we generate $\mat A_r$ as follows (the procedure for $\mat B_r$ is again the same). We have a pre-specified list of functions, including polynomials of low order, trigonometric functions, and a B-spline basis. We choose a random number of these functions (evaluated on a grid), complement them with random vectors to span the whole space, and orthogonalize this collection to obtain an eigenbasis. The eigenvalues are chosen of to have a power decay with a randomly selected base. This procedure leads to a visually smooth covariance $\mat A_r$. 

Once the covariance is set, we generate the data as follows. For $n=1,\ldots,N$ and $r=1,\ldots,R$, we generate $\widetilde{\mat X}_n,r$ from the matrix-variate Gaussian distribution with mean zero and covariance $\mat A_r \ct \mat B_r$ \citep{gupta2000}. The sample is obtained as $\mat X_n = \widetilde{\mat X}_{n,1} + \ldots + \widetilde{\mat X}_{n,R}$ for $n=1,\ldots,N$. However, this is not possible for the parametric setup discussed in the main paper, since that covariance is not $R$-separable. For the parametric setup, we calculate the square-root of the covariance using the eigendecomposition, and then generate the data with this squre-root, which is computationally demanding.

\newpage
Finally, we note that the runtimes reported in the main paper were obtained on a personal laptop belonging to one of the authors, with Windows 10 (64-bit) operating system, Intel Core i7-7700HQ (2.8 GHz) processor, 16 GB RAM, and running \textsf{R} version 3.6.3 \citep{R}.

\subsection{Further Empirical Results}\label{sec:further_simulations}

In this section, we first report prediction performance in the parametric covariance setup. Then we move to the third covariance setup described in the previous section.

The relative prediction errors for the parametric covariance setup are reported in Table~\ref{tab:prediction}, because there are only small differences between different values of $R$ in this case. The latter is true due to the fact that the covariance is quite close to separable, but likely also due to the shape  of the parametric covariance. Nonetheless, the behaviour of the prediction error is qualitatively similar to Figure 3 of the main paper. Namely, for small values of $N$, the separable model works the best, and with increasing $N$ also the best choice of $R$ increases. The empirical covariance predicts again poorly.

\begin{table}[!t]
    \centering
    \caption{Relative prediction error in the parametric covariance setup for the $R$-separable estimators ($R=1,2,3$) and the empirical covariance.}
    \label{tab:prediction}
    \begin{tabular}{l ccc ccc ccc}
    \toprule
$N$ & 128 & 256 & 512 & 1024 & 2048 & 4096 & 8192 & 16384 & 32768 \\    
    \midrule
$R=1$ & \textbf{0.356} & 0.354 & 0.354 & 0.353 & 0.349 & 0.354 & 0.350 & 0.352 & 0.353   \\
$R=2$ & 0.363 & \textbf{0.349} & \textbf{0.345} & 0.342 & 0.337 & 0.341 & 0.338 & 0.340 & 0.340  \\
$R=3$ & 0.422 & 0.352 & 0.346 & \textbf{0.341} & \textbf{0.335} & \textbf{0.338} & \textbf{0.335} & \textbf{0.336} & \textbf{0.335}  \\
$EMP$ & 1.012 & 0.970 & 0.866 & 0.674 & 0.473 & 0.396 & 0.361 & 0.348 & 0.342  \\
\bottomrule
    \end{tabular}
\end{table}
  
Recall that the prediction results in the main paper concern the one-step-ahead prediction in both dimensions. But we can naturally predict also arbitrary missing patterns using the methodology of the main paper. Hence we show in Table~\ref{tab:prediction_random_pattern} and Figure~\ref{fig:prediction_random_pattern} results analogous to
Figure~\ref{fig:prediction} (left) of the main paper and Table~\ref{tab:prediction}, only with a random missing pattern of size 200 left out for prediction from every single surface. We can see that the results are qualitatively similar. Generally, it can be said that prediction of arbitrary patterns is statistically easier, since it corresponds more closely to interpolation.

\begin{table}[!t]
    \centering
    \caption{Relative prediction error in the parametric covariance setup with random missing patterns.}
    \label{tab:prediction_random_pattern}
    \begin{tabular}{l ccc ccc ccc}
    \toprule
$N$ & 128 & 256 & 512 & 1024 & 2048 & 4096 & 8192 & 16384 & 32768 \\    
    \midrule
$R=1$ & 0.123 & 0.120 & 0.121 & 0.120 & 0.120 & 0.120 & 0.120 & 0.120 & 0.120   \\
$R=2$ & \textbf{0.119} & 0.108 & 0.104 & 0.102 & 0.100 & 0.100 & 0.100 & 0.099 & 0.099   \\
$R=3$ & 0.133 & \textbf{0.107} & \textbf{0.103} & \textbf{0.100} & \textbf{0.096} & \textbf{0.093} & \textbf{0.091} & 0.090 & 0.090  \\
$EMP$ & 0.773 & 0.620 & 0.430 & 0.242 & 0.136 & 0.103 & 0.091 & \textbf{0.085} & \textbf{0.083} \\
\bottomrule
    \end{tabular}
\end{table}

\begin{figure}[!b]
  \centering
%   (a) $\widehat{R}=2$ & (b) $\widehat{R}=3$ \\
  \includegraphics[width=0.47\textwidth]{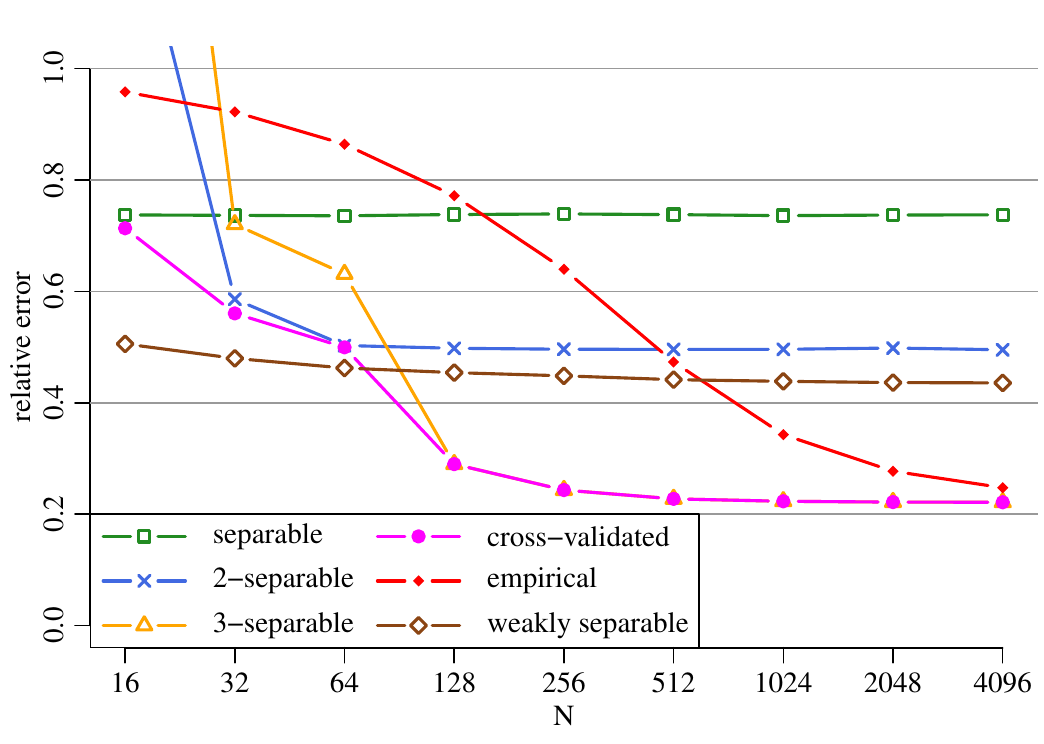} 
      \caption{Relative prediction error depending on the sample size $N$ in the non-parametric scenario with random missing patterns.}
    \label{fig:prediction_random_pattern} 
\end{figure}

In the remainder this section, we consider randomly generated $R$-separable covariances, i.e.
\begin{equation}\label{eq:simul_cov}
\mat C = \sum_{r=1}^R \sigma_r \mat A_r \ct \mat B_r .
\end{equation}
In our experience, the exact choice of the covariance (apart of the degree-of-separability and magnitudes of the scores) does not affect the result. Note that this is also supported by theory, where no specific assumptions on the covariance are made.

Firstly, we examine the role of decay of the scores $\{ \sigma_j \}$. We fix the true degree-of-separability at $R=4$ and choose $\sigma_r = \alpha^{R-r}$, $r=1,\ldots,R$, for different values of $\alpha$. Hence we have different polynomial decays for the scores. Higher $\alpha$'s correspond to faster decays and consequently to a higher value of $a_R$ from Theorem~\ref{thm:fully_observed}. Thus we expect slower convergence for higher values of $\alpha$.

% \begin{figure}[!t]
%   \centering
%   \begin{tabular}{ccc}
%   (a) $N=256$ & (b) $N=2048$ & (c) $N=16384$ \vspace*{-0.45mm}\\
%   \includegraphics[width=0.3\textwidth]{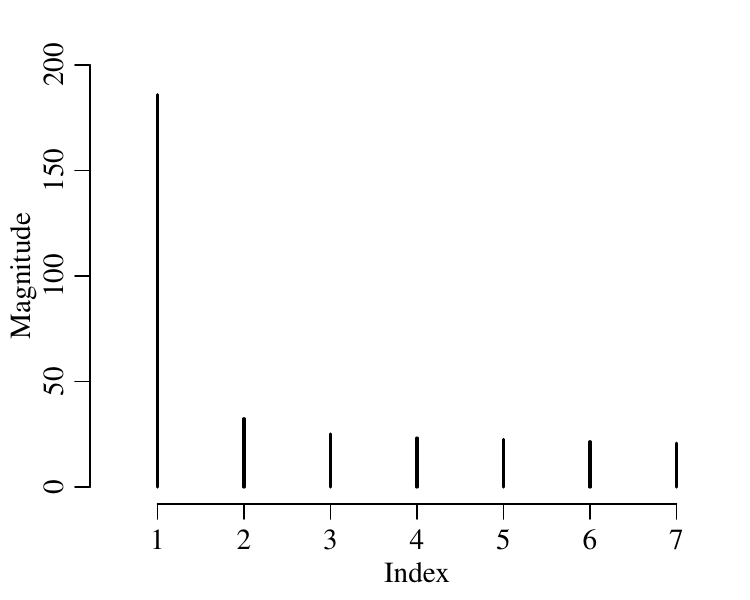} &
%   \includegraphics[width=0.3\textwidth]{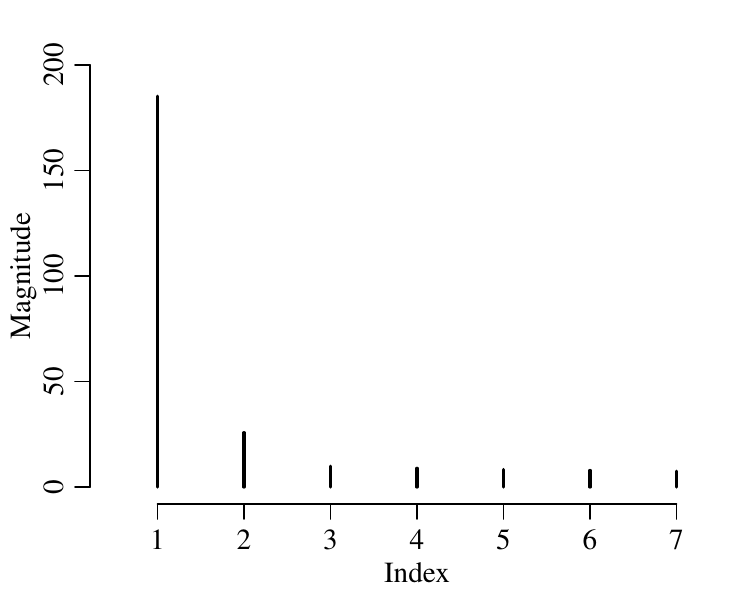} &
%   \includegraphics[width=0.3\textwidth]{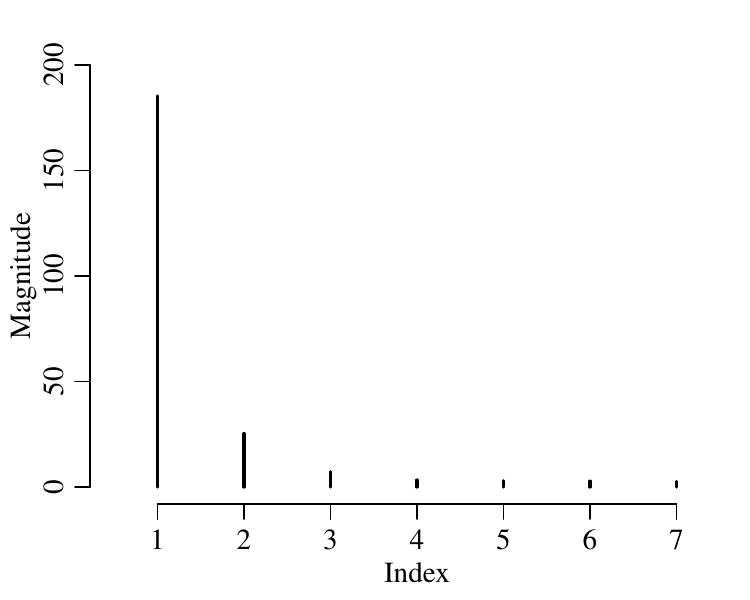}
%   \end{tabular}  
%   \caption{Scree plots for 3 different values of $N$ averaged over the 10 independent simulation runs.}
%     \label{fig:scree} 
% \end{figure}

Figure~\ref{fig:rancov} shows the results for $\widehat{R}=2$ and $\widehat{R}=3$ (i.e. for a wrongly chosen degree-of-separability, since the truth is $R=4$). The sample size $N$ is varied and the grid size is fixed again as $K=50$. To be able to visually compare the speed of convergence, we removed the bias
\[
\left(\sum_{r=R+1}^\infty \sigma_r^2 / \sum_{r=1}^\infty \sigma_r^2\right)^{1/2}
\]
from all the errors. For example, for $\widehat{R}=3$, the bias is equal to $\sigma_4$ (since the $\mat C$ is standardized to have norm equal to one). However, $\sigma_4$ varies for different $\alpha$'s, so we opt to remove $\sigma_4$ from the error corresponding to all the $\alpha$'s, in order for the curves in Figure~\ref{fig:rancov} to depict only the variance converging to zero. As expected, the convergence is faster for smaller $\alpha$'s corresponding to a slower decay of the separable component scores.

One can also notice certain transitions in Figure~\ref{fig:rancov}. For $\widehat{R}=2$ and $\alpha=6$, the drop in error between sample sizes $N=128$ and $N=256$ clearly stands out in the figure. This is because when $\alpha=6$, the scores decay so rapidly that the sample size $N=128$ is not enough for the second principal separable component to be estimated reliably. The situation is similar to Figure~\ref{fig:gneiting}, but since here the bias is subtracted from the error, choosing a higher $\widehat{R}$ than we can afford to estimate is even more striking. A similar behavior can be observed for multiple curves in Figure~\ref{fig:rancov} (right), when $\widehat{R}=3$. For example, one can observe an ``elbow'' at $N=512$ for the relatively slow decay of $\alpha=3$. This ``elbow'' is present because for smaller sample sizes, a smaller value of $\widehat{R}$ would have been better. From $N=512$ onwards, all 3 separable components are estimated reliably and the variance decays rather slowly and smoothly. This ``elbow'' exists for $\alpha=4,5$ as well, but manifests ``later" in terms of $N$. Finally, for $\alpha=6$, one can actually observe 3 different modes of convergence in Figure~\ref{fig:rancov} (right): before $N=256$, the degree-of-separability is overestimated by 2; between $N=256$ and $N=4096$, it is overestimated by 1; and it seems that for a larger $N$, the curve would finally enter the slowly converging mode.

\begin{figure}[!t]
  \centering
  \begin{tabular}{cc}
%   (a) $\widehat{R}=2$ & (b) $\widehat{R}=3$ \\
  \includegraphics[width=0.47\textwidth]{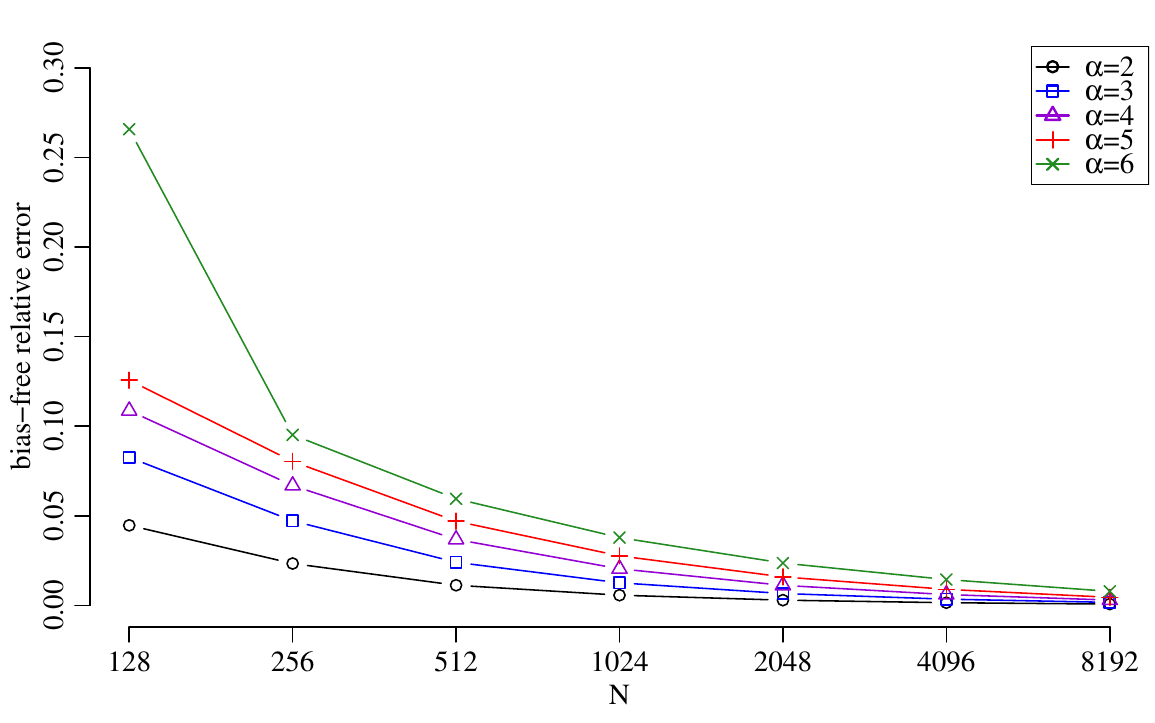} &
  \includegraphics[width=0.47\textwidth]{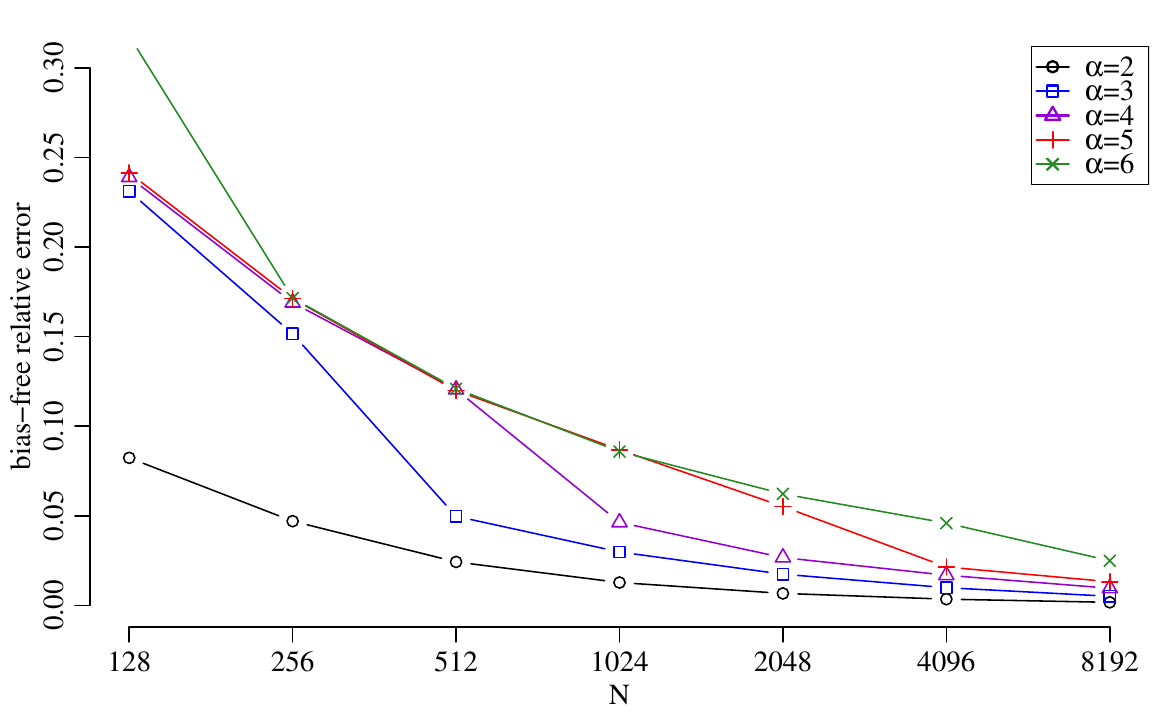}
  \end{tabular}  
      \caption{Relative estimation error curves for $\widehat{R}=2$ (left) and $\widehat{R}=3$ (right) when the true degree-of-separability is $R=4$. The reported errors are bias-free; bias was subtracted to make apparent the different speed of convergence of the variance to zero.}
    \label{fig:rancov} 
\end{figure}

In the second part of this section, we display the numerical performance of the inversion algorithm of Section~\ref{sec:inversion}. The number of observations and the degree-of-separability will be fixed now as $N=500$ and $R=5$. For different values of $K$, we find the estimator $\widehat{\mat C}_{R,N} = \sum_{r=1}^R \widehat{\sigma}_r \widehat{\mat A}_r \ct \widehat{\mat B}_r$.

We generate a random $\mat X \in \R^{K \times K}$, and calculate $\mat Y = \widehat{\mat C}_{R,N}^+ \mat X$, where $\widehat{\mat C}_{R,N}^+$ is a regularized estimator of Section~\ref{sec:inversion}. Then we use the inversion algorithm to recover $\mat X$ from the knowledge of $\widehat{\mat C}_{R,N}^+$ and $\mat Y$.

Figure~\ref{fig:inverse} (left) shows how the number of iterations required by the PCG inversion algorithm evolves as the grid size $K$ increases for different values of the regularizer $\epsilon$, leading to 3 fixed condition numbers $\kappa = 10,10^2,10^3$. The results are again averages over 10 independent simulation runs. For a fixed condition number $\kappa$, we always find $\epsilon$ such that the condition number of $\mat C_{R,N}^+$ is exactly $\kappa$. We want to control the condition number of the left-hand side matrix because it generally captures the difficulty of the inversion problem \citep{vanloan1983}. In the case of PCG, the number of iterations is expected to grow roughly as the square-root of the condition number. As seen in Figure~\ref{fig:inverse} (left), the number of iterations needed for convergence does depend on the condition number in the expected manner, while the grid size $K$ does not affect the required number of iterations. This fact allows us to claim that the computational complexity of the inversion algorithm is $\O(K^3)$, i.e. the same as for the separable model.

\begin{remark}\label{rem:PCG_works}
We ran the PCG algorithm with a relatively stringent tolerance $10^{-10}$ (i.e. stopping the algorithm only when two subsequent iterates are closer than $10^{-10}$ in the Frobenius norm). The maximum recovery error across all simulation runs and all setups of the parameters was $3\cdot 10^{-10}$. Hence there is no doubt that the inversion algorithm converges.
\end{remark}

Finally, we explore the claim that a nearly separable model leads to milder computational costs than a highly non-separable model. We take $\widehat{\mat C}_{R,N}$ estimated with $N=500$ and with different values of $R=3,5,7$, and we are going to change its scores $\widehat{\sigma}_r$, $r=1,\ldots,R$. Firstly, we fix $\widetilde{\sigma}_1 \in \{ 0.15 , 0.25 , \ldots, 0.95 \} \cap \{ \sigma; \sigma \geq 1/R \}$. Then, we generate $\widetilde{\sigma}_r$ for $r=2,\ldots,R$ as a random variable uniformly distributed on the interval
\[
\left( \max\left( 0, 1 - \sum_{j=1}^{r-1} \widetilde{\sigma}_j - (R-r)\widetilde{\sigma}_1 \right),
\min\left(\widetilde{\sigma}_1, 1 - \sum_{j=1}^{r-1} \widetilde{\sigma}_j\right)\right).
\]
This leads to a collection of scores which are smaller than or equal to $\widetilde{\sigma}_1$ and they sum up (together with $\widetilde{\sigma}_1$) to 1. Lastly, we set
\[
\widetilde{\mat C}_{R,N}^\epsilon = \sum_{r=1}^R \widetilde{\sigma}_r \widehat{\mat A}_r \ct \widehat{\mat B}_r + \epsilon \mat I.
\]
This time, we do not look for $\epsilon$ in a way such that the condition number of $\widetilde{\mat C}_{R,N}^\epsilon$ is fixed, because we want to explore how the size of $\widetilde{\sigma}_1$ affects the number of iterations. However, a part of this effect is how $\widetilde{\sigma}_1$ affects the condition number, so we just standardize with several different (but fixed) values of $\epsilon$. Finally, we generate a random $\mat X \in \R^{K \times K}$, calculate $\mat Y = \widetilde{\mat C}_{R,N}^\epsilon \mat X$, and use the inversion algorithm to recover $\mat X$ from the knowledge of $\widetilde{\mat C}_{R,N}^\epsilon$ and $\mat Y$. Remark~\ref{rem:PCG_works} applies here as well.

\begin{figure}[!t]
  \centering
  \begin{tabular}{cc}
  \includegraphics[width=0.47\textwidth]{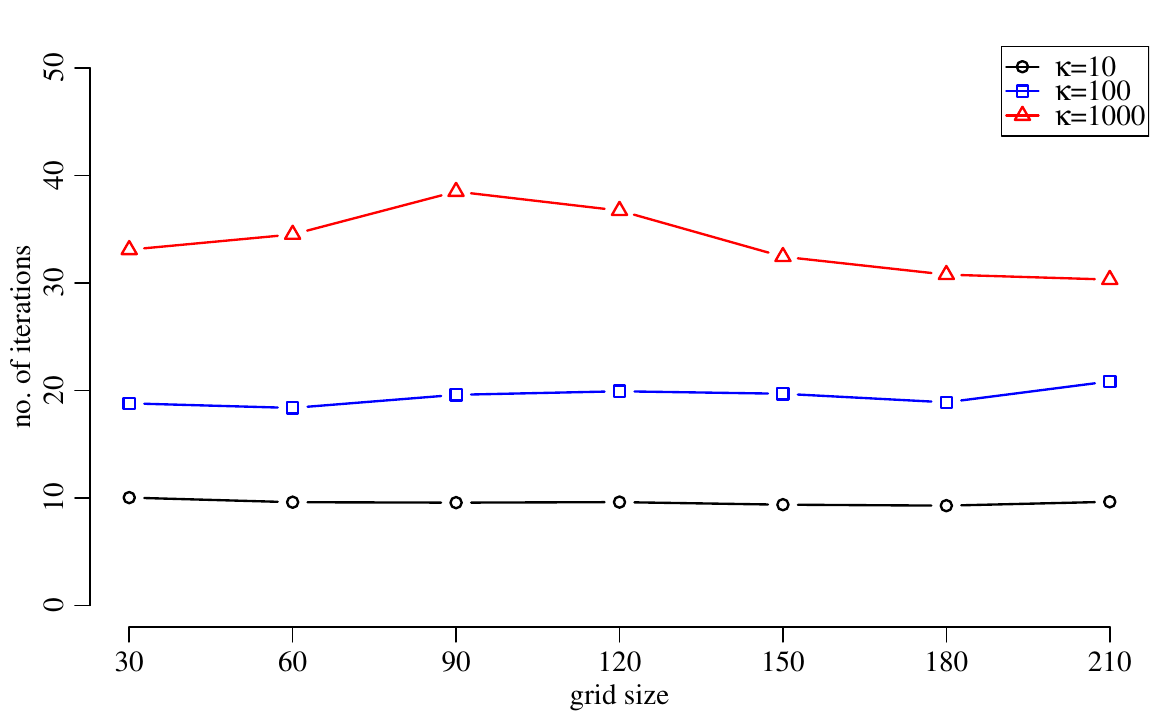} &
  \includegraphics[width=0.47\textwidth]{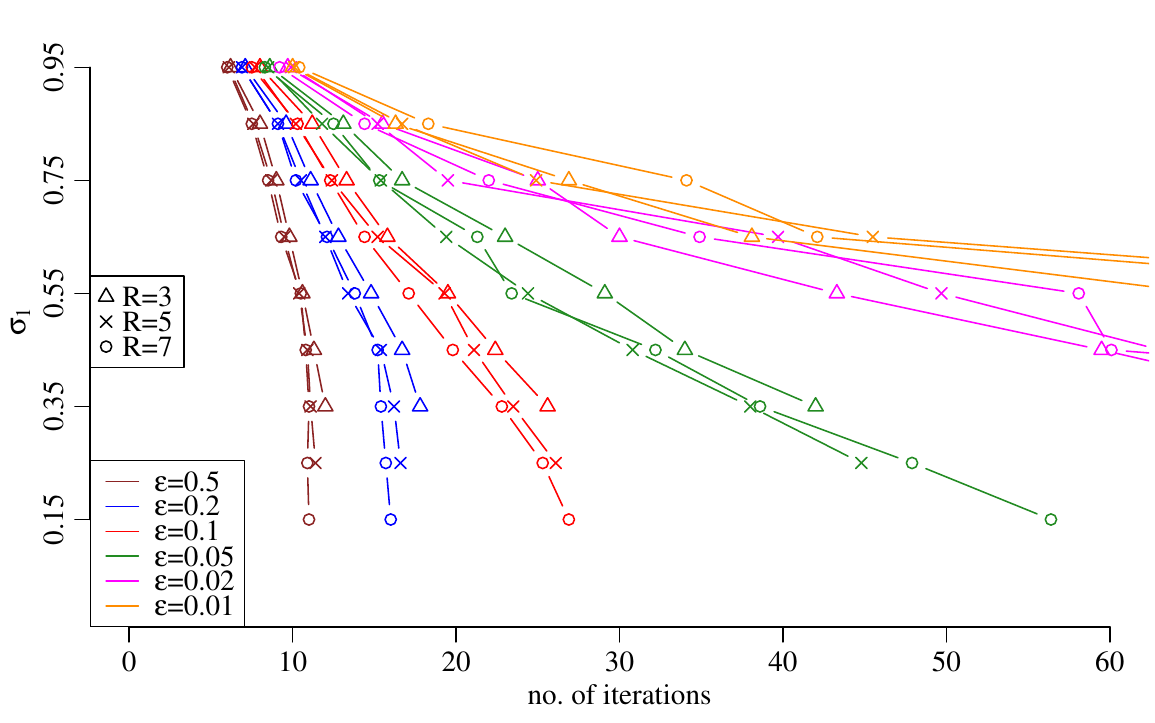}
  \end{tabular}  
         \caption{{\normalfont \textbf{Left:}} Number of iterations required by the PCG algorithm does not increase with increasing grid size $K$ when the condition number $\kappa$ is held fixed, $\kappa \in \{ 10, 10^2, 10^3 \}$. {\normalfont \textbf{Right:}} The smaller the leading score $\sigma_1$ is (relatively to other scores), the higher number of iterations is required by the PCG algorithm. Different regularization constants $\epsilon$ are distinguished by different colors, while different degrees-of-separability are distinguished by different symbols. Smaller degrees-of-separability prevent $\sigma_1$ from being too small, leading to naturally shorter curves, but otherwise the degree-of-separability is not affecting the results very much.}
    \label{fig:inverse} 
\end{figure}

The results are plotted in Figure~\ref{fig:inverse} (right). As expected, the better regularized problems with a larger $\epsilon$ generally require a smaller number of iterations. But more importantly, the number of iterations increases with decreasing $\widetilde{\sigma}_1$. This effect is milder for large $\epsilon$, but more severe for smaller regularization constants. This means that unless $C$ is well-posed (i.e. with a relatively large smallest eigenvalue $\lambda_{\min}$), we have to pay extra costs for very substantial departures from separability (i.e. when the largest score $\sigma_1$ is not much larger than the other scores).

\newpage
\bibliographystyle{imsart-nameyear}
\bibliography{biblio}

\end{document}